
\documentclass{amsart}
\usepackage[utf8]{inputenc}

\usepackage{amsthm}
\usepackage{amsmath}
\usepackage{amsfonts}
\usepackage{amssymb}
\usepackage{fullpage}
\usepackage{xcolor}

\usepackage{hyperref,cleveref,color,verbatim}

\newtheorem{theorem}{Theorem}[section]
\newtheorem{prop}[theorem]{Proposition}
\newtheorem{lemma}[theorem]{Lemma}
\newtheorem{cor}[theorem]{Corollary}
\newtheorem{conj}[theorem]{Conjecture}

\newtheorem*{theorem*}{Theorem}

\theoremstyle{definition}
\newtheorem{remark}[theorem]{Remark}
\newtheorem{defi}[theorem]{Definition}

\newcommand{\fS}{\mathfrak{S}}
\newcommand{\R}{\mathbb{R}}
\newcommand{\Q}{\mathbb{Q}}

\newcommand\Def[1]{\emph{#1}}%
\newcommand\x{{x}}%
\newcommand\X{{X}}%
\newcommand\I{{I}}%
\renewcommand\a{{a}}
\renewcommand\v{{v}}%

\DeclareMathOperator*{\Diag}{Diag}
\DeclareMathOperator*{\diag}{diag}

\DeclareMathOperator*{\Adj}{Adj}
\DeclareMathOperator*{\SO}{SO}

\DeclareMathOperator{\conv}{\operatorname{conv}}

\newcommand*{\Sym}{\R^{n \times n}_{\mathrm{sym}}}

\newcommand{\De}{\operatorname{D}}


\title{Linear Principal Minor Polynomials: hyperbolic determinantal
inequalities and spectral containment}
\date{\today}

\author{Grigoriy Blekherman}
\address[Grigoriy Blekherman]{School of Mathematics, Georgia Institute of Technology, Atlanta, Georgia}
\email{greg@math.gatech.edu}
\author{Mario Kummer}
\address[Mario Kummer]{Technische Universit\"at Dresden, Germany}
\email{mario.kummer@tu-dresden.de}
\author{Raman Sanyal}
\address[Raman Sanyal]{Institut f\"ur Mathematik, Goethe-Universit\"at Frankfurt, Germany} 
\email{sanyal@math.uni-frankfurt.de}
\author{Kevin Shu}
\address[Kevin Shu]{School of Mathematics, Georgia Institute of Technology, Atlanta, Georgia}
\email{kshu8@gatech.edu}
\author{Shengding Sun}
\address[Shengding Sun]{School of Mathematics, Georgia Institute of Technology, Atlanta, Georgia}
\email{ssun313@gatech.edu}

\thanks{We would like to thank Santanu Dey for many insightful conversations. Grigoriy Blekherman, Kevin Shu and Shengding Sun were partially supported by NSF grant DMS-1901950. Mario Kummer was partially supported by DFG grant 421473641}

\begin{document}

\maketitle

\begin{abstract}
    A \emph{linear principal minor polynomial} or \emph{lpm polynomial} is a
    linear combination of principal minors of a symmetric matrix. By
    restricting to the diagonal, lpm polynomials are in bijection to
    multiaffine polynomials. We show that this establishes a one-to-one
    correspondence between homogeneous multiaffine stable polynomials and
    PSD-stable lpm polynomials. This yields new construction techniques for
    hyperbolic polynomials and allows us to generalize the well-known
    Fisher--Hadamard and Koteljanskii inequalities from determinants to
    PSD-stable lpm polynomials. We investigate the relationship between the
    associated hyperbolicity cones and conjecture a relationship
    between the eigenvalues of a symmetric matrix and the values of certain lpm
    polynomials evaluated at that matrix. We refer to this relationship as spectral containment.
\end{abstract}


\section{Introduction}\label{sec:intro_new}%
A homogeneous polynomial $p \in \R[\x] := \R[x_1,\dots,x_n]$ is called
\Def{hyperbolic} with respect to $\a \in \R^n$ if $p(\a) \neq 0$ and
$p_{\a}(t) := p(\v-t \a) \in \R[t]$ has only real roots for all $\v \in \R^n$.
The \emph{hyperbolicity cone} $H_a(p)$ of a polynomial $p$ hyperbolic with
respect to $\a\in\R^n$ is the set of all $\v\in\R^n$ such that $p(\v-t\a)$ has
only nonnegative roots.  Originally conceived in the context of partial
differential equations~\cite{Ga51}, hyperbolic polynomials were discovered to
yield deep results in (non-)linear algebra, combinatorics, and optimization;
see, for example,~\cite{MR3754960, borcea2008applications,
BB09stabilitypreserver, MR3098077,SanSau,  MR2738906}.

\newcommand\1{\mathbf{1}}%
A fundamental family of hyperbolic polynomials is given by the \Def{elementary symmetric polynomials}
\[
    e_k(\x) \ := \ \sum_{J} \prod_{i \in J}x_i\,,
\]
where $J$ ranges over all $k$-element subsets of $[n] := \{1,\dots,n\}$.  The
elementary symmetric polynomials are \Def{stable}: a multivariate polynomial
$p\in\R[\x]$ is \Def{stable} if for all complex numbers $z_1,\ldots,z_n$ lying
in the open upper half-plane, we have $p(z_1,\ldots,z_n)\neq0$. If $p$ is
homogeneous, then it is stable if and only if it is hyperbolic with respect to
all $\a \in \R^n_{>0}$, and we denote by $H(p) = H_{\1}(p)$ its
hyperbolicity cone with respect to the vector $\1 = (1,\dots,1)$.

Let $\X$ denote an $n\times n$ matrix of indeterminants, and for any $J \subseteq [n]$, we let $\X_J$ denote the principal submatrix of $\X$ indexed by $J$.
We can then define a polynomial
\[
    E_k(\X) \ := \ \sum_{J} \det(\X_J) \, ,
\]
where again $J$ ranges over all $k$-element subsets of $[n]$.
It turns out that these polynomials do not vanish on the \Def{Siegel upper
half-plane}, i.e., the set of all complex symmetric matrices with positive definite imaginary part. Such polynomials are called \Def{Dirichlet--G\r{a}rding}~\cite{harvey2009hyperbolic} or \Def{PSD-stable}~\cite{JorgensTheobald}. For a homogeneous polynomial $P$ this property is equivalent to being hyperbolic with respect to any positive definite matrix, and we denote by $H(P)$ its hyperbolicity cone (taken with respect to the identity matrix).
When the context is clear, we will simply refer to PSD-stable polynomials $P(\X)$ as stable polynomials.

The starting point of our paper is the observation that $E_k(\X)$ is closely
related to $e_k(x)$.
For instance, if $\X = \Diag(x_1, \dots, x_n)$ is the diagonal matrix with diagonal entries $X_{ii} = x_i$, then $E_k(\X) = e_k(x_1, \dots, x_n)$.
To generalize this observation, let $\Sym$ be the vector space of real symmetric $n \times n$-matrices and let $\R[\X]$ be the ring of polynomials on it, where we regard $\X$ as being an $n\times n$ matrix of indeterminants.
A polynomial $P(\X) \in \R[\X]$ is called a \Def{linear
principal minor polynomial} or \Def{lpm-polynomial} if $P(\X)$ is of the form
\[
    P(\X) \ = \ \sum_{J} c_J \det(\X_J) \, ,
\]
where $J$ ranges over all subsets of $[n]$. The first natural question we
pursue is what interesting properties are shared by a homogeneous lpm
polynomial $P(\X)$ and its diagonal restriction $p(\x)$. We show that $P(\X)$
is PSD-stable if and only if $p(\x)$ is stable. We obtain a similar result for
the related concept of Lorentzian polynomials. We prove these facts using the
theory of stability preservers \cite{weyl}.

Having established these basic facts we generalize classical determinantal
inequalities from linear algebra, such as the Hadamard--Fischer and
Koteljanskii inequality to the setting of stable lpm polynomials. This
generalizes the Hadamard-type inequalities for $k$-positive matrices obtained
in~\cite{Hadamard-kPos}.
Another
interesting consequence of the above results is that they give construction of
a new class of hyperbolic polynomials. Using lpm polynomials we construct a
hyperbolic cubic in 6 variables which has a Rayleigh difference that is not a
sum of squares. The previously smallest known example with $43$ variables was
contructed by Saunderson in \cite{soshyperbolic}. Finally, we study whether
the eigenvalue vector $\lambda$ of a matrix $X$ lying in the hyperbolicity
cone of a stable lpm polynomial $P(\X)$ lies in the hyperbolicity cone of
$p(x)$ and show how this is related to a potential generalization of the
classical Schur--Horn theorem \cite{schur23, horn54}. We now discuss our
results in detail.


\section{Our results in detail}\label{sec:results}
Our discussion of lpm polynomials can also be viewed from a different perspective. A
polynomial $p \in \R[\x] := \R[x_1,\dots,x_n]$ is \Def{multi-affine} if it
is a linear combination of square-free monomials $x^J = \prod_{j \in J} x_j$
for $J \subseteq [n]$.
We define a linear map $\Phi$ from the vector subspace of multi-affine polynomials in $x_1,\ldots,x_n$ to the vector space of lpm polynomials, which we call the \Def{minor lift map}, as follows. The minor lift of
\[
    p(\x) = \sum_{J \subseteq [n]} a_J \prod_{i \in J} x_i,
\]
is the polynomial $P = \Phi(p)$ given by
\[
    P(X) = \sum_{J \subseteq [n] } a_J \det(\X_J).
\]
We note that $\deg(\Phi(p))=\deg(p)$ and that $\Phi(p)$ is homogeneous if and
only if $p$ is homogeneous.  When it is unambiguous, we will use lower case
letters such as $p$ to denote homogeneous, multiaffine $p \in \R[x_1, \dots,
x_n]$, and use the corresponding upper case letters for the minor lift, so
that $P$ is equal to $\Phi(p)$.

\newcommand\PSD{\mathrm{PSD}}%
\subsection{Properties of the minor lift map and constructions}
Our first result is that the minor lift map sends stable polynomials to
PSD-stable polynomials. Stronger even, let us call a matrix $A$
\Def{$k$-locally PSD} if every principal $k\times k$-submatrix $A_J$ of $A$ is
positive semidefinite. The collection $\PSD_k$ of $k$-locally PSD matrices is
a closed convex cone and $\PSD_d \subset \PSD_{d-1} \subset \cdots \subset
\PSD_1$.

\begin{theorem}\label{thm:minor_lift_stable}
    Let $p$ be a homogeneous multiaffine polynomial of degree $k$. If $p$ is
    stable, then $P = \Phi(p)$ is hyperbolic with $\PSD_k \subseteq H(P)$. In
    particular, $P$ is PSD-stable.
\end{theorem}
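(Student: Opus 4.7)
My plan is to realize $P$ as the output of a stability-preserving differential operator applied to a classical stable generating function for principal minors, and then invoke the Borcea--Br\"and\'en theorem on linear stability preservers. The crucial identity is the Laplace-type expansion
\[
    \det\bigl(X + \Diag(y)\bigr) \;=\; \sum_{J \subseteq [n]} \det(X_{[n] \setminus J}) \prod_{i \in J} y_i,
\]
which, after defining the reciprocal $\tilde p(y) := \sum_{|J|=k} a_J \prod_{i \in [n] \setminus J} y_i$, yields
\[
    P(X) \;=\; \tilde p(\partial_y)\, \det\bigl(X + \Diag(y)\bigr)\,\Big|_{y = 0}.
\]
Since the operation $q(x) \mapsto x_1 \cdots x_n \, q(1/x)$ preserves multi-affine stability (the map $z \mapsto 1/z$ swaps the upper and lower half-planes), $\tilde p$ is itself a homogeneous, multi-affine, stable polynomial of degree $n - k$.

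By closedness of hyperbolicity cones, it suffices to show that every $A$ in the relative interior of $\PSD_k$ --- that is, with each $A_J$ (for $|J| = k$) strictly positive definite --- is a hyperbolicity direction of $P$; equivalently, $P(V + itA) \neq 0$ for every $V \in \Sym$ and $t > 0$. Only the degree-$(n-k)$ homogeneous component of $g_X(y) := \det(X + \Diag(y))$, whose coefficients are exactly the principal $k \times k$ minors $\det(X_J) = \det(V_J + itA_J)$, contributes to $\tilde p(\partial_y)\, g_X(y)\big|_{y=0}$. Thus the stability input needed for the argument only involves the $k \times k$ principal blocks, each of which has positive definite imaginary part. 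The Borcea--Br\"and\'en theorem then says that $\tilde p(\partial_y)$ is a stability preserver, and consequently $\tilde p(\partial_y)\, g_X(y)$ is stable in $y$.

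The main obstacle is the final non-vanishing step, since a stable polynomial can vanish at $y = 0$ (for instance $y_1$). The key structural observation is that the top-degree-in-$y$ part of $\tilde p(\partial_y)\, g_X(y)$ equals $p(y)$ itself, a nonzero homogeneous multi-affine stable polynomial whose coefficients have a single sign by Br\"and\'en's theorem on homogeneous multi-affine stable polynomials. Combined with a Hurwitz-type / scaling argument --- for example, replacing $V$ by $sV$ and tracking the dominant terms as $s \to \infty$ --- this rigidity should rule out accidental cancellation and force $P(V + itA) \neq 0$. Making the ``$\PSD_k$ rather than $\PSD$'' refinement rigorous, i.e., ensuring the preserver argument goes through when only the $k \times k$ blocks of $A$ are positive definite rather than $A$ itself, is the technical heart of the proof.
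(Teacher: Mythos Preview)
Your core idea—realizing $P$ via the reciprocal/dual polynomial acting as a differential operator on a determinant, and invoking the Borcea--Br\"and\'en preserver theorem—is exactly the paper's idea. The difference is in the execution, and the paper's is considerably cleaner.

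First, the paper avoids your auxiliary variables $y$ and the evaluation at $y=0$ entirely. It observes that
\[
  P(\Diag(x)+M)\;=\;p^*\!\left(\tfrac{\partial}{\partial x_1},\ldots,\tfrac{\partial}{\partial x_n}\right)\det(\Diag(x)+M),
\]
so the differentiation variables \emph{are} the remaining variables. Since $\det(\Diag(x)+M)$ is stable in $x$ for every real symmetric $M$, and $p^*$ is stable, the preserver theorem gives stability of $P(\Diag(x)+M)$ directly—no ``evaluate at $0$'' step, and hence no need for your Hurwitz/scaling patch. Together with the easy observation that $P(A)>0$ for positive definite $A$ (all principal minors positive, all coefficients of one sign), this already gives PSD-stability via the equivalence in the paper's Lemma~\ref{lem:psd_stable}.

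Second, for $\PSD_k\subseteq H(P)$ the paper does \emph{not} try to show that every $k$-locally positive definite $A$ is a hyperbolicity direction. Once $P$ is known to be hyperbolic with respect to $I$, the containment follows from a one-line positivity check: for $A\in\PSD_k$ and $t>0$, each $k\times k$ block of $A+tI$ is positive definite, so $P(A+tI)=\sum_J a_J\det((A+tI)_J)>0$; hence $P(A-tI)$ has no negative roots and $A\in H(P)$.

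Your sketch, by contrast, has a real gap at precisely the point you flag. The sentence ``the stability input needed for the argument only involves the $k\times k$ principal blocks'' is not how the preserver theorem works: to conclude that $\tilde p(\partial_y)g_X(y)$ is stable you need $g_X(y)=\det(X+\Diag(y))$ itself to be stable, and for $X=V+itA$ this requires $tA+\Diag(\Im y)$ to be positive definite, which fails when $A$ is only $k$-locally positive definite. Restricting attention to the degree-$(n-k)$ homogeneous part of $g_X$ does not help, because stability is not a coefficient-wise property. The paper sidesteps both this issue and your evaluation-at-zero issue by the two simplifications above.
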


For $A \in \Sym$, let $\pi(A) = (A_{11},A_{22},\dots,A_{nn})$ be the projection to
the diagonal. A first implication for the associated hyperbolicity cones is
as follows.
\begin{cor}\label{cor:diag}
    Let $p$ be a homogeneous multiaffine stable polynomial and $P = \Phi(p)$.
    If $A \in H(P)$, then $p(\pi(A)) \geq P(A)$ and $\pi(A) \in H(p)$.
\end{cor}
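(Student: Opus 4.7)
The plan is to exploit a hidden $(\Z/2)^n$-symmetry of $P$ coming from conjugation by diagonal sign matrices. For any $\epsilon \in \{\pm 1\}^n$, set $S = \Diag(\epsilon)$. Since $(SAS)_J = S_J A_J S_J$ and $\det(S_J)^2 = 1$, every principal minor is invariant: $\det((SAS)_J) = \det(A_J)$, and hence $P(SAS) = P(A)$. Moreover, the identity $SAS - tI = S(A - tI)S$ shows that $P(SAS - tI) = P(A - tI)$ as polynomials in $t$, so the two have the same roots. Thus $H(P)$ is invariant under the map $A \mapsto SAS$.

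Since $H(P)$ is a closed convex cone and contains $SAS$ for every sign matrix $S$, it contains the average
\[
    \frac{1}{2^n}\sum_{\epsilon \in \{\pm 1\}^n} SAS \;=\; \Diag(\pi(A)),
\]
because $(SAS)_{ij} = \epsilon_i \epsilon_j A_{ij}$ and $\mathbb{E}[\epsilon_i \epsilon_j] = \delta_{ij}$. To transfer this containment to $H(p)$, I would restrict $P$ to diagonal matrices: for $D = \Diag(x)$ one has $P(D - tI) = p(x - t\mathbf{1})$, so $\Diag(x) \in H(P)$ if and only if $x \in H(p)$. Applying this to $x = \pi(A)$ yields $\pi(A) \in H(p)$.

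For the inequality $p(\pi(A)) \geq P(A)$, I would invoke G\r{a}rding's theorem that $P^{1/k}$ is concave on $H(P)$, where $k = \deg P = \deg p$. Concavity applied to the convex combination above gives
\[
    p(\pi(A))^{1/k} \;=\; P\!\Bigl(\tfrac{1}{2^n}\sum_S SAS\Bigr)^{\!1/k} \;\geq\; \tfrac{1}{2^n}\sum_S P(SAS)^{1/k} \;=\; P(A)^{1/k},
\]
since $P(SAS) = P(A)$ for every $S$; raising to the $k$-th power then finishes the proof. The only creative step is recognizing the sign-conjugation symmetry in the first paragraph; once that is spotted, convexity of the hyperbolicity cone yields the qualitative containment and G\r{a}rding concavity promotes it to the quantitative inequality.
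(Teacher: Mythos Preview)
Your proof is correct, and it takes a genuinely different route from the paper's.

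The paper deduces the corollary from the more general Hadamard--Fischer inequality (Theorem~\ref{thm:hadamard-fischer}) and the projection lemma (Lemma~\ref{lem:lpm_projection}), specialized to the finest partition $\Pi=\{\{1\},\dots,\{n\}\}$. Those two results are proved by rather different machinery: Lemma~\ref{lem:lpm_projection} goes through the supporting-hyperplane description of $H(P)$ together with the observation that $\nabla P$ evaluated at a block-diagonal matrix is again block-diagonal; Theorem~\ref{thm:hadamard-fischer} is proved by analyzing the univariate polynomial $q(t)=P(tA+\pi_\Pi(A))$ and applying AM--GM to its roots.

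Your argument replaces all of this with a single symmetry observation --- invariance of principal minors under $A\mapsto SAS$ for sign matrices $S$ --- and then reads off both conclusions from convexity of $H(P)$ and G\r{a}rding's concavity of $P^{1/k}$. This is shorter and more transparent for the diagonal case. It also generalizes: if one restricts to sign matrices that are constant on the blocks of a partition $\Pi$, the same averaging produces $\pi_\Pi(A)$, and the identical argument yields both Lemma~\ref{lem:lpm_projection} and Theorem~\ref{thm:hadamard-fischer} in full. So your symmetry idea in fact gives an alternative proof of the general Hadamard--Fischer inequality that bypasses Lemmas~\ref{lem:dual}--\ref{lem:derioff} entirely.
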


Using Theorem \ref{thm:minor_lift_stable}, we are able to construct new
interesting hyperbolic polynomials.  Given a hyperbolic polynomial $p$ and
points $\a,\v$ in the hyperbolicity cone of $p$, the \emph{Rayleigh
difference} $\Delta_{\v,\a}(p)=D_{\v}p \cdot D_{\a} p - p \cdot D_{\v} D_{\a}
p$ is a polynomial nonnegative on $\mathbb{R}^n$ \cite{interlacers}. If the
polynomial $\Delta_{\v,\a}(p)=D_{\v}p \cdot D_{\a} p - p \cdot D_{\v} D_{\a}
p$ is not a sum of squares, this has interesting implications for
determinantal representations as well as a hyperbolic certificate of
nonnegativity of $\Delta_{\v,\a}(p)$ which cannot be recovered by sums of
squares. Saunderson \cite{soshyperbolic} characterized all pairs $(d,n)$ for
which there exists such a hyperbolic polynomial $p\in\R[x_1,\ldots,x_n]$ of
degree $d$, except when $d=3$, where the smallest known example with a
Rayleigh difference that is not a sum of squares depends on 43 variables. We
are able to reduce the number of variables to 6. See
\Cref{sec:soshyperbolicity} for more details.

\begin{theorem}\label{thm:nonsoshyperbolic}
There exists an (explicit) degree-3 hyperbolic polynomial $p$ in $6$ variables
and vectors $\v,\a \in H(p)$ such that the Rayleigh difference
$\Delta_{\v,\a}(p)$ is not a sum-of-squares.  \end{theorem}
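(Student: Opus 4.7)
The plan is to use \Cref{thm:minor_lift_stable} as the source of candidate degree-$3$ hyperbolic polynomials on symmetric matrix spaces and then restrict such a polynomial to a carefully chosen $6$-dimensional linear subspace. Concretely, I would first fix a small dimension $n$ (trying, e.g., $n=4$ first, since $n=3$ forces the minor lift to be proportional to the symmetric $3\times 3$ determinant, which is already known to have an SOS Rayleigh difference), pick a homogeneous multiaffine stable polynomial $q(x_1,\dots,x_n)$ of degree $3$ (for instance a nonnegative linear combination of the monomials $x_ix_jx_k$ arising from a matroid with the half-plane property, such as $U_{3,4}$, so that $q=e_3$), and form its minor lift $Q=\Phi(q)$ on $\Sym$. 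By \Cref{thm:minor_lift_stable}, $Q$ is PSD-stable and in particular hyperbolic with respect to the identity matrix.

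Next, I would choose six linearly independent symmetric matrices $M_1,\dots,M_6 \in \Sym$ spanning a subspace $V$ that meets the hyperbolicity cone $H(Q)$, and define
\[
    p(y_1,\dots,y_6) \ := \ Q\bigl(y_1 M_1 + \cdots + y_6 M_6\bigr).
\]
Since the restriction of a hyperbolic polynomial to a linear subspace meeting the hyperbolicity cone is again hyperbolic, $p$ is a degree-$3$ hyperbolic polynomial in $6$ variables. Any $\v,\a$ in the preimage of $H(Q)$ under this parametrization then lie in $H(p)$, so that the Rayleigh difference $\Delta_{\v,\a}(p) = D_\v p \cdot D_\a p - p \cdot D_\v D_\a p$ is automatically a nonnegative polynomial of degree $4$ in $6$ variables.

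The crux of the proof is to arrange the parameters $(q,M_1,\dots,M_6,\v,\a)$ so that $\Delta_{\v,\a}(p)$ fails to be a sum of squares. To establish non-SOS-ness I would appeal to the standard SDP duality between sums of squares and positive semidefinite Gram matrices: it is enough to exhibit a linear functional $\ell$ on the space of degree-$4$ polynomials in $6$ variables such that the associated $21\times 21$ moment matrix indexed by quadratic monomials is positive semidefinite, while $\ell(\Delta_{\v,\a}(p))<0$. Such an $\ell$ is a dual witness of non-SOS-ness, so once it is produced, the verification reduces to checking positive semidefiniteness of an explicit matrix and computing a single inner product.

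The main obstacle is therefore locating an explicit quintuple $(q,V,\v,\a)$ together with the dual certificate $\ell$. I would find these by numerical semidefinite programming: parametrize a family of candidates (varying the coefficients of $q$, the entries of the $M_i$, and the hyperbolicity-cone directions $\v,\a$), solve the SOS feasibility SDP for $\Delta_{\v,\a}(p)$, detect infeasibility, and extract the dual slack as $\ell$. Finally, I would rationalize the numerical data and verify the two conditions (positive semidefiniteness of the moment matrix and negativity of $\ell(\Delta_{\v,\a}(p))$) symbolically, which will constitute the explicit proof that $\Delta_{\v,\a}(p)$ is not a sum of squares.
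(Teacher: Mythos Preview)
Your high-level strategy coincides with the paper's: build the candidate as the minor lift of a degree-$3$ multiaffine stable polynomial, restricted to a $6$-dimensional linear slice of symmetric matrices. However, as written your proposal is a search procedure, not a proof: the theorem asserts an \emph{explicit} example, and you have not fixed $(q,M_1,\dots,M_6,\v,\a)$ or produced any certificate. Until those are pinned down and verified, nothing has been shown.

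Where the paper's execution differs from your sketch is worth noting. First, the starting polynomial is not $e_3(x_1,\dots,x_4)$ but the spanning-tree polynomial $t_{K_4}$ of the complete graph $K_4$, a degree-$3$ multiaffine stable polynomial in the six edge variables; its minor lift $T$ lives on $6\times 6$ symmetric matrices, and the paper restricts $T$ to an explicit $6$-parameter family $A=A(x_1,x_2,x_3,a,b,c)$ with a structured $4\times 4$ inner block. The two directions are the coordinate vectors for $x_1$ and $x_3$, which sit on the boundary of the cone (handled by a separate remark). Second, and more importantly, the non-SOS certificate is purely algebraic rather than a moment/SDP dual. The Rayleigh difference $W$ turns out to depend only on $a,b,c,x_2$; the paper exhibits ideals $J_1,\dots,J_4$ containing $W$, checks that $J=\bigcap_k J_k$ is real radical, and observes that any decomposition $W=\sum g_i^2$ would force each $g_i\in J$ and hence $W\in J\cdot J$. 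A computer-algebra check that $W\notin J\cdot J$ then finishes the argument. This gives an exact symbolic obstruction with no numerical SDP or rationalization step. Your dual-functional route would also certify non-SOS in principle, but the real-radical argument is what makes the paper's proof short and fully rigorous.
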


\subsection{Hyperbolic determinantal inequalities}
We generalize some well known theorems from linear algebra to the setting of
lpm polynomials. Note that the cone of positive semidefinite matrices is
precisely the hyperbolicity cone of $\det(\X)$, which is the minor lift of
$e_n(x) = x_1\cdots x_n$.  For our generalizations, we replace the determinant
by the minor lift of a homogeneous multiaffine stable polynomial, and the cone
of positive semidefinite matrices by the hyperbolicity cone of the minor lift. 

Hadamard's inequality is a classical result comparing the determinant of any positive semidefinite matrix with the product of its diagonal entries. 

\begin{theorem*}[Hadamard's inequality]
	Let $A$ be a $n\times n$ positive semidefinite matrix, then $\det (A)\le \prod_{i=1}^n A_{ii}$. 
\end{theorem*}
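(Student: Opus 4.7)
The plan is to recognize Hadamard's inequality as the prototypical instance of \Cref{cor:diag}, applied to the top elementary symmetric polynomial $e_n$.

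First, I would set $p(\x) = e_n(\x) = x_1 x_2 \cdots x_n$. This polynomial is homogeneous of degree $n$ and multiaffine. Its stability is immediate: if $z_1,\ldots,z_n$ all lie in the open upper half-plane, then none of them vanishes, so their product is nonzero. The only nonzero coefficient in the expansion of $p$ is $a_{[n]} = 1$, so the minor lift collapses to
\[
    P(\X) \;=\; \Phi(p)(\X) \;=\; \sum_{J \subseteq [n]} a_J \det(\X_J) \;=\; \det(\X).
\]

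Next, I would identify $H(P) = H(\det)$ with the PSD cone. Since $\det(\X - tI)$ is (up to sign) the characteristic polynomial of $\X$, its roots are the eigenvalues of $\X$, and these are all nonnegative precisely when $\X \succeq 0$; hence $H(\det) = \PSD_n$. This is also consistent with \Cref{thm:minor_lift_stable}: applied to $p = e_n$ of degree $k = n$, it yields $\PSD_n \subseteq H(\det)$.

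Finally, I would invoke \Cref{cor:diag}: any $A \in H(P) = \PSD_n$ satisfies $p(\pi(A)) \ge P(A)$. With our specific choice of $p$ and $P$ this reads
\[
    \prod_{i=1}^{n} A_{ii} \;\ge\; \det(A),
\]
which is exactly Hadamard's inequality. The real work lies in establishing \Cref{cor:diag} (and hence \Cref{thm:minor_lift_stable}) via stability preserver theory; once that is in place, Hadamard is immediate. If one instead preferred a self-contained argument, the classical Cholesky route is quick: write $A = LL^\transpose$ with $L$ lower triangular and nonnegative diagonal; then $\det(A) = \prod_i L_{ii}^2$, while $A_{ii} = \sum_{j \le i} L_{ij}^2 \ge L_{ii}^2$, and multiplying over $i$ yields the inequality.
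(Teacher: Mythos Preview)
Your proposal is correct and matches the paper's perspective: Hadamard's inequality is precisely the specialization of \Cref{cor:diag} (equivalently, of \Cref{thm:hadamard-fischer} with the finest partition $\Pi=\{\{1\},\ldots,\{n\}\}$) to $p=e_n$ and $P=\det$, and the paper derives it exactly this way. One small wording fix: you write ``establishing \Cref{cor:diag} (and hence \Cref{thm:minor_lift_stable})'', but the dependence goes the other way---\Cref{thm:minor_lift_stable} is the prerequisite, and \Cref{cor:diag} is obtained from the hyperbolic Fischer--Hadamard argument in Section~\ref{sec:hadamard-fischer}.
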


An equivalent statement of this inequality is as follows: if $V$ is any, not
necessarily symmetric, real $n\times n$-matrix with columns $\v_1,\dots,
\v_n$, then $\det(V) \le \prod_{i=1}^n \|\v_i\|_2$. This yields a
geometric interpretation, since the absolute value of determinant is the
volume of an $n$-dimensional parallelepiped with edges $\v_1,\dots,\v_n$.

Fischer's inequality generalizes Hadamard's inequality, and relates the
determinant of a positive semidefinite matrix to its principal minors. Let
$\Pi = \{S_1, \dots, S_m\}$ be a partition of the set $[n]$ into $m$ disjoint
subsets.  Given such a partition, we write $i \sim j$ if $i,j \in S_k$ 
for some $k=1,\dots,m$.  Let $\mathcal{D}_{\Pi}$ be the vector space of
symmetric matrices that are \Def{block  diagonal} with respect to $\Pi$
\[
    \mathcal{D}_{\Pi} = \{A \in \Sym : A_{ij} = 0 \textnormal{ if } i \not \sim j \}.
\]
Let $\pi_{\Pi}$ be the orthogonal projection from $\Sym$ onto the subspace $\mathcal{D}_{\Pi}$. 

\begin{theorem*}[Fischer's inequality]
	Let $A$ be a positive semidefinite matrix. Then
	\[
        \det (\pi_{\Pi}(A))  \ \geq \ \det (A)  \, .
    \]
	
	\end{theorem*}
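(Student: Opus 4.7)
The plan is to prove Fischer's inequality by induction on the number $m$ of blocks, reducing to a two-block statement that is in turn handled by the Schur complement formula. First observe that $\pi_{\Pi}(A)$ is block diagonal with diagonal blocks $A_{S_1},\dots,A_{S_m}$, so $\det(\pi_{\Pi}(A)) = \prod_{k=1}^m \det(A_{S_k})$, and the inequality to be established reads $\det(A) \leq \prod_{k=1}^m \det(A_{S_k})$.

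For the inductive step, set $T = S_1 \cup S_2$ and consider the coarser partition $\Pi' = \{T, S_3, \dots, S_m\}$, which has $m-1$ blocks. The inductive hypothesis applied to $A$ with partition $\Pi'$ gives $\det(A) \leq \det(A_T)\prod_{k=3}^m \det(A_{S_k})$, while the two-block case applied to the positive semidefinite matrix $A_T$ with partition $\{S_1, S_2\}$ yields $\det(A_T) \leq \det(A_{S_1})\det(A_{S_2})$. Chaining these two inequalities gives the general Fischer bound, so it suffices to handle $m = 2$.

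For two blocks, write $A = \begin{pmatrix} B & C \\ C^{\transpose} & D \end{pmatrix}$ with $B = A_{S_1}$ and $D = A_{S_2}$. When $B$ is positive definite, the Schur complement identity gives $\det(A) = \det(B)\det(D - C^{\transpose}B^{-1}C)$. Since $A \succeq 0$, the Schur complement $D - C^{\transpose}B^{-1}C$ is itself positive semidefinite, and because $C^{\transpose}B^{-1}C \succeq 0$ we also have $D - C^{\transpose}B^{-1}C \preceq D$ in the Loewner order. Monotonicity of the determinant on the PSD cone (the product of eigenvalues is monotone under $\preceq$) then yields $\det(D - C^{\transpose}B^{-1}C) \leq \det(D)$, and so $\det(A) \leq \det(B)\det(D)$. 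The remaining case in which $B$ is singular is handled by a standard perturbation: apply the inequality to $A + \epsilon I$, whose diagonal blocks are positive definite, and let $\epsilon \to 0^+$, using continuity of both sides in $\epsilon$.

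The argument is essentially classical and contains no serious obstacles; the only mildly delicate point is the passage to the degenerate case, which is dispatched by the perturbation above. It is worth remarking that the paper's framework does not appear to yield Fischer directly: the minor lift of $e_n$ gives via \Cref{cor:diag} only the Hadamard inequality, since the map $\pi$ there is projection to the diagonal rather than to any block-diagonal subspace $\mathcal{D}_\Pi$, so the Schur complement route is the most efficient.
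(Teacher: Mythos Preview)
Your Schur complement argument is correct and is the standard classical proof of Fischer's inequality. However, it differs substantially from how the paper obtains the result. The paper derives Fischer's inequality as the special case $P=\det$ of \Cref{thm:hadamard-fischer} (the hyperbolic Fischer--Hadamard inequality). That theorem is proved by an entirely different mechanism: one shows $\pi_{\Pi}(A)\in H(P)$ via a supporting-hyperplane argument (\Cref{lem:dual}, \Cref{lem:derioff}, \Cref{lem:lpm_projection}), then studies the real-rooted polynomial $q(t)=P(tA+\pi_{\Pi}(A))$, reads off its coefficients using Euler's identity, and finishes with AM--GM on the (positive) roots. Your approach is more elementary and self-contained for the determinant, while the paper's route is what generalizes to arbitrary PSD-stable lpm polynomials.

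Your closing remark is a misreading: you point only to \Cref{cor:diag}, which indeed concerns the projection $\pi$ to the diagonal and thus gives only Hadamard. But the paper's main inequality \Cref{thm:hadamard-fischer} is stated for an \emph{arbitrary} partition $\Pi$ and projection $\pi_{\Pi}$, and specializing it to $P=\det$ yields Fischer's inequality in full generality; the paper records exactly this as a corollary immediately after the proof. So the framework does recover Fischer directly.
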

\noindent Observe that Hadamard inequality is simply Fischer's inequality
with partition $\Pi = \{ \{1\},\dots,\{n\}\}$. We now give a hyperbolic
generalization of Fischer-Hadamard inequality. 
For $P = \Phi(e_k)$, our hyperbolic Hadamard inequality was obtained
in~\cite{Hadamard-kPos}.

\begin{theorem}[Hyperbolic Fischer--Hadamard inequality]\label{thm:hadamard-fischer}
    Let $P$ be a homogeneous PSD-stable lpm-polynomial and $\Pi$ a partition.
    Then
    \[
        P(\pi_{\Pi}(A))  \ \ge \  P(A)
    \]
    holds for all $A \in H(P)$.
\end{theorem}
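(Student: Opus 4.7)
The plan is to realize the orthogonal projection $\pi_\Pi$ as an averaging operator over a finite abelian group of linear automorphisms that preserve both $P$ and $H(P)$, and then to reduce the inequality to the concavity of $P^{1/k}$ on the hyperbolicity cone (G{\aa}rding's theorem).

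Write $\Pi = \{S_1,\dots,S_m\}$ and, for each $\epsilon = (\epsilon_1,\dots,\epsilon_m) \in \{-1,+1\}^m$, let $D_\epsilon$ be the diagonal matrix with $(D_\epsilon)_{ii} = \epsilon_k$ for $i \in S_k$. Conjugation $A \mapsto D_\epsilon A D_\epsilon$ multiplies $A_{ij}$ by $\epsilon_k \epsilon_l$ whenever $i \in S_k$ and $j \in S_l$, and averaging over $\epsilon$ exactly projects onto the block-diagonal part:
\[
\pi_\Pi(A) \;=\; \frac{1}{2^m}\sum_{\epsilon\in\{-1,+1\}^m} D_\epsilon A D_\epsilon.
\]

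Two facts about this group action do the work. First, each $D_\epsilon$ is diagonal with $\pm 1$ entries, so for every $J \subseteq [n]$ we have $\det((D_\epsilon A D_\epsilon)_J) = \det(D_{\epsilon,J})^2 \det(A_J) = \det(A_J)$, giving $P(D_\epsilon A D_\epsilon) = P(A)$. Second, $D_\epsilon^2 = I$ yields $D_\epsilon(A - tI)D_\epsilon = D_\epsilon A D_\epsilon - tI$, so $P(D_\epsilon A D_\epsilon - tI) = P(A - tI)$ has exactly the same roots in $t$, hence $D_\epsilon A D_\epsilon \in H(P)$ whenever $A \in H(P)$. Convexity of $H(P)$ then places the average $\pi_\Pi(A)$ in $H(P)$ as well.

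Now G{\aa}rding's theorem applies: after normalizing so that $P(I) > 0$, the function $P^{1/k}$ is concave on $H(P)$, where $k = \deg P$. Applied to the convex combination above and combined with $P(D_\epsilon A D_\epsilon) = P(A)$, this gives
\[
P(\pi_\Pi(A))^{1/k} \;\ge\; \frac{1}{2^m}\sum_\epsilon P(D_\epsilon A D_\epsilon)^{1/k} \;=\; P(A)^{1/k},
\]
and raising to the $k$-th power yields the theorem. I do not expect a serious obstacle here: the only substantive input is G{\aa}rding's concavity, and the novel observation is simply that $P$ and $H(P)$ are invariant under the full sign-flip group compatible with $\Pi$. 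From there the argument is essentially Ky Fan's classical deduction of Fischer's inequality from concavity of $\det^{1/n}$, transferred verbatim to the lpm setting.
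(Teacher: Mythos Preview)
Your proof is correct, and it takes a genuinely different route from the paper's.

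The paper first establishes a gradient lemma: for any lpm polynomial $P$ and any block-diagonal $B\in\mathcal D_\Pi$, one has $\nabla P(B)\in\mathcal D_\Pi$ (this is proved by a direct combinatorial argument on the Leibniz expansion of principal minors). Using this together with a supporting-hyperplane description of $H(P)$, the paper shows $\pi_\Pi(A)\in H(P)$. The inequality itself is then obtained by analyzing the univariate polynomial $q(t)=P(tA+\pi_\Pi(A))$: the gradient lemma and Euler's identity identify the coefficient of $t$ as $dP(\pi_\Pi(A))$, and an AM--GM argument on the (positive) roots of $q$ finishes the proof.

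Your argument replaces all of this by the single observation that $\pi_\Pi$ is the average of the sign-conjugations $A\mapsto D_\epsilon A D_\epsilon$ over $\epsilon\in\{\pm1\}^m$, each of which fixes every principal minor and hence fixes $P$ and $H(P)$; then G{\aa}rding's concavity of $P^{1/k}$ on $H(P)$ gives the inequality directly. This is shorter and more conceptual, and it is exactly the hyperbolic analogue of Ky Fan's proof of the classical Fischer inequality. What the paper's longer route buys is the gradient lemma itself, which is reused: for instance, the monotonicity statement $t\mapsto P((1-t)A+t\,\pi_\Pi(A))$ is increasing on $[0,1]$ (Corollary~\ref{thm:incfh}) is derived from the same coefficient computation, and this refinement is not immediate from the averaging-plus-concavity argument alone.
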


The classical Fischer--Hadamard inequality is a consequence of a more general
inequality known as Koteljanskii's inequality, which handles the case of
overlapping blocks \cite{Koteljanskii}.

\begin{theorem*}[Koteljanski's inequality]
Let $S$ and $T$ be two subsets of $[n]$ and $A$ be a positive semidefinite $n \times n$ matrix. Then
\[
    \det(A_S) \det(A_T) \ \ge \ \det(A_{S \cup T}) \det(A_{S \cap T}) \, .
\]
\end{theorem*}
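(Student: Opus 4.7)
The plan is to derive Koteljanskii's inequality from Fischer's inequality via a Schur-complement reduction. Set $I := S \cap T$, $S' := S \setminus I$, $T' := T \setminus I$, and let $B := A_{S \cup T}$. Write $B$ as a symmetric block matrix with blocks indexed by the (pairwise disjoint) sets $I, S', T'$, so that $A_S$ corresponds to the sub-blocks on $I \cup S'$, $A_T$ to those on $I \cup T'$, and $A_{S \cap T} = B_{II}$.

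Assume first that $B_{II}$ is invertible. Recall the Schur-complement identity: for any subset $U \subseteq S \cup T$ with $U \supseteq I$,
\[
    \det(B_U) \ = \ \det(B_{II}) \cdot \det\bigl((B/B_{II})_{U \setminus I}\bigr),
\]
where $B/B_{II}$ is the symmetric matrix on $S' \cup T'$ defined by the usual block formula. Applying this to $U = S,\, T,\, S \cup T$ and using $\det(A_{S \cap T}) = \det(B_{II})$, the desired inequality
\[
    \det(A_S)\det(A_T) \ \ge \ \det(A_{S\cup T})\det(A_{S\cap T})
\]
becomes, after canceling a common factor of $\det(B_{II})^2$,
\[
    \det\bigl((B/B_{II})_{S'}\bigr) \cdot \det\bigl((B/B_{II})_{T'}\bigr) \ \ge \ \det(B/B_{II}).
\]
Since $A$ is PSD, its principal submatrix $B$ is PSD, and hence so is the Schur complement $C := B/B_{II}$. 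The last inequality is then precisely Fischer's inequality, already stated above, applied to the PSD matrix $C$ with the disjoint two-block partition $\{S', T'\}$ of its index set $S' \cup T'$.

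The edge case $S \cap T = \emptyset$ is immediate, being just Fischer applied to $A_{S \cup T}$ with partition $\{S, T\}$. The remaining case where $B_{II}$ is singular is handled by a standard perturbation argument: replace $A$ with $A + \varepsilon I$ for $\varepsilon > 0$, which is strictly positive definite and thus makes $B_{II}$ invertible, apply the inequality, and let $\varepsilon \to 0^+$ using continuity of the determinant. The main technical step to verify carefully is the identification of the two diagonal blocks of $C = B/B_{II}$ with $A_S/B_{II}$ and $A_T/B_{II}$ respectively; this follows directly from the block-matrix definition of the Schur complement and is the mechanism that converts the overlapping pair $(S,T)$ into a disjoint pair $(S',T')$ to which Fischer applies.
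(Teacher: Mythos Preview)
Your argument is correct: reduce Koteljanskii to Fischer by taking the Schur complement with respect to the overlap $I = S \cap T$. The identification of $(B/B_{II})_{S'}$ and $(B/B_{II})_{T'}$ with $A_S/B_{II}$ and $A_T/B_{II}$ is immediate from the block formula for the Schur complement, and the perturbation $A \mapsto A + \varepsilon I$ handles the singular case cleanly.

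The paper does not give a standalone proof of the classical inequality; it is stated as background with a citation to Koteljanskii. What the paper \emph{does} prove is the hyperbolic generalization (the ``Hyperbolic Koteljanskii inequality''), and specializing that proof to $P = \det$ yields an alternative argument for the classical statement that is quite different from yours. There one forms the multiaffine polynomial $P_A(x) = \det(A + \operatorname{Diag}(x))$, observes that it is stable with nonnegative coefficients whenever $A$ is PSD, and then invokes the negative lattice condition of Borcea--Br\"and\'en--Liggett for multiaffine stable polynomials, which delivers the determinantal inequality directly. Your Schur-complement route is more elementary and entirely self-contained (only Fischer and block-matrix algebra), but it is tailored to the determinant and does not obviously extend to general PSD-stable lpm polynomials; the stability/NLC approach is exactly what allows the paper's generalization to go through.
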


While we were not able to generalize Koteljanskii's inequality in a way that implies the hyperbolic Fischer--Hadamard inequality, we found a hyperbolic generalization of Koteljanskii's inequality, which uses a different interpretation of what it means to take a minor of a matrix.

\begin{defi}\label{def:restriction}
    Given a degree $k$ homogeneous lpm polynomial $P$ and $T\subseteq [n]$
    with $|T|\ge n-k$, we define the
    \Def{restriction} 
    \[
        P|_T \ := \ \Bigl(\prod_{i\in[n]\setminus T}\frac{\partial }{\partial
        X_{ii}}\Bigr)P \, ,
    \]
   where we take partial derivative with respect to diagonal variables not in
    $T$. 
\end{defi}
With this definition we can state the hyperbolic Koteljanskii inequality, which is related to the negative lattice condition in \cite{BBL09negativedependence}:

\begin{theorem}[Hyperbolic Koteljanskii inequality]\label{thm:kota}
    Let $P$ be a homogeneous PSD-stable lpm-polynomial and 
$S,T\subseteq [n]$. Then 
    \[
        P|_S (A) P|_T (A) \ \ge \ P|_{S\cup T} (A) P|_{S\cap T} (A)
    \]
    holds for all $A\in H(P)$.
\end{theorem}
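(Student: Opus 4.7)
The plan is to encode all four quantities $P|_S(A), P|_T(A), P|_{S\cup T}(A), P|_{S\cap T}(A)$ as coefficients of a single multiaffine stable polynomial in $n$ auxiliary variables and then invoke the negative lattice condition for multiaffine stable polynomials from~\cite{BBL09negativedependence}. Concretely, given $A \in H(P)$, I would introduce the diagonal shift
\[
    Q(t_1,\dots,t_n) \ := \ P\bigl(A + \Diag(t_1,\dots,t_n)\bigr) \ \in \ \R[t_1,\dots,t_n].
\]

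The first step is to verify that $Q$ is a stable multiaffine polynomial with nonnegative coefficients. Multiaffinity is immediate because each variable $X_{ii}$ appears linearly in every principal minor $\det(X_J)$. Stability follows because $\operatorname{Im}(t_i)>0$ for every $i$ makes the imaginary part of $A+\Diag(t)$ equal to the positive definite matrix $\Diag(\operatorname{Im}(t))$, so PSD-stability of $P$ forces $Q(t)\ne 0$. Expanding in the $t_i$ via the chain rule,
\[
    Q(t) \ = \ \sum_{S\subseteq[n]} \Bigl(\prod_{i\in S}\tfrac{\partial}{\partial X_{ii}}P\Bigr)(A)\cdot\prod_{i\in S}t_i \ = \ \sum_{S\subseteq[n]} P|_{[n]\setminus S}(A)\cdot\prod_{i\in S}t_i,
\]
so the coefficient $c_S$ of $\prod_{i\in S}t_i$ in $Q$ is precisely $P|_{[n]\setminus S}(A)$. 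For nonnegativity of these coefficients I would apply the classical G\r{a}rding fact that differentiating a hyperbolic polynomial in a direction lying in its hyperbolicity cone yields a hyperbolic polynomial whose hyperbolicity cone contains the original: since \Cref{thm:minor_lift_stable} gives $\PSD\subseteq\PSD_k\subseteq H(P)$ and each rank-one matrix $E_{ii}$ is PSD, iterating this enlargement yields $H(P)\subseteq H(P|_T)$ for every admissible $T$, so $A\in H(P|_T)$ and (with the sign convention $P(I)>0$) $P|_T(A)\ge 0$.

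With $Q$ now multiaffine stable and having nonnegative coefficients, the negative lattice condition of~\cite{BBL09negativedependence} tells us its coefficients are log-submodular,
\[
    c_U \cdot c_V \ \ge \ c_{U\cap V}\cdot c_{U\cup V} \qquad \text{for all } U,V \subseteq [n].
\]
Setting $U=[n]\setminus S$ and $V=[n]\setminus T$ gives $U\cap V=[n]\setminus(S\cup T)$ and $U\cup V=[n]\setminus(S\cap T)$, and substituting $c_S = P|_{[n]\setminus S}(A)$ converts this precisely into $P|_S(A)\,P|_T(A)\ge P|_{S\cup T}(A)\,P|_{S\cap T}(A)$. The main obstacle I expect is the chain of hyperbolicity cone containments needed for coefficient nonnegativity: one must verify that each $E_{ii}$ lies in the hyperbolicity cone of every intermediate derivative polynomial, which is exactly what \Cref{thm:minor_lift_stable} combined with G\r{a}rding's enlargement lemma delivers. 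Once that is in hand, the remainder of the proof is the bookkeeping above.
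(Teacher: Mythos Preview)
Your approach is essentially identical to the paper's: both introduce the diagonal shift $P_A(x)=P(A+\Diag(x))$, verify it is multiaffine, stable, with nonnegative coefficients (the paper packages this last part as \Cref{cor:subdet}), and then apply the negative lattice condition from~\cite{BBL09negativedependence}. Your bookkeeping with complements $U=[n]\setminus S$, $V=[n]\setminus T$ is exactly what is needed, and in fact is slightly more careful than the paper's terse statement ``$\partial^S P_A(0)=P|_S(A)$'', which really should read $\partial^{[n]\setminus S}P_A(0)=P|_S(A)$.
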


\subsection{Spectral containment property} 
If $A$ is an $n\times n$ symmetric matrix,
we say that $\lambda \in \R^n$ is an eigenvalue vector of $A$ if the entries of $\lambda$
are precisely the eigenvalues of $A$ with appropriate multiplicities.
Note that the set of eigenvalue vectors of a symmetric matrix $A$ are invariant under permutations.

We recall the example of the $k$-th elementary symmetric polynomial $e_k(\x)$ and its minor lift $E_k(\X)$ from the introduction.
It is well-known that $E_k(A)=e_k(\lambda)$ where $\lambda$ is an eigenvalue vector of $A$.
In particular, it follows that $A\in H(P)$
implies that $\lambda \in H(p)$. Notice that since $e_k$ is invariant under
permutations of coordinates, the order in which we list the eigenvalues of $A$
in $\lambda(A)$ does not matter. This motivates the following definition.

\begin{defi}\label{def:eigenvalue_containment}
    A homogeneous multiaffine stable polynomial $p \in \R[x_1,\dots,x_n]$ has
    the \Def{spectral containment property} if for any $A \in H(P) \subset
    \Sym$, there is an eigenvalue vector $\lambda \in \R^n$ of $A$
    such that $\lambda \in H(p)$.
\end{defi}
\begin{remark}
    We could make a stronger requirement in \Cref{def:eigenvalue_containment} that for all $A \in H(P)$, \emph{all} eigenvalue vectors of $A$ lie in $H(p)$, seems to be too restrictive; we do not have any examples of polynomials besides the elementary symmetric polynomials with this stronger property.
\end{remark}

We now give a number of polynomials which have the spectral containment property:
\begin{theorem}\label{thm:eigenvalue_containment}
    The following classes of polynomials have the spectral containment property:
    \begin{enumerate}
        \item The elementary symmetric polynomials $e_1,\dots,e_n$.
        \item For any $n \ge k \ge d$, and any $|\varepsilon|$ sufficiently small, $e_d(x_1, \dots, x_n) + \varepsilon e_d(x_1, \dots, x_k)$.
        \item Stable linear polynomials.
        \item Any degree $n-1$ stable polynomial that interlaces $e_{n-2}$.
        \item $e_2(x_1, x_2, x_3, x_4) - \varepsilon(x_1x_2 + x_1x_3)$ for $\varepsilon$ sufficiently small.
    \end{enumerate}
    Moreover, if $p$ has the spectral containment property, and $x_0$ is a variable not used in $p$, then $x_0p$ has the spectral containment property.
\end{theorem}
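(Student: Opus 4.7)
Parts (1) and (3) admit direct arguments. For (1), expanding $\det(tI + A) = \prod_i(t + \lambda_i)$ coefficient-wise gives the identity $E_k(A - sI) = e_k(\lambda_1 - s, \ldots, \lambda_n - s)$ for any ordering $\lambda$ of the eigenvalues of $A$, so the eigenvalue vector of every $A \in H(E_k)$ lies in $H(e_k)$ (for every ordering, even). For (3), a stable homogeneous linear polynomial has the form $p(x) = \sum_i c_i x_i$ with $c_i \ge 0$, and its lift $P(X) = \sum_i c_i X_{ii}$ cuts out the half-space $H(P) = \{A : \langle c, \diag(A)\rangle \ge 0\}$. By the Schur--Horn theorem, $\diag(A)$ is a convex combination of the permutations of any eigenvalue vector $\lambda$ of $A$, so $A \in H(P)$ yields $\langle c, \lambda_\pi\rangle \ge 0$ for at least one permutation $\pi$, giving $\lambda_\pi \in H(p)$.

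For the closure under $p \mapsto x_0 p$, partition an $(n+1) \times (n+1)$ symmetric matrix as $Y = \begin{pmatrix} Y_{00} & y^\transpose \\ y & Y' \end{pmatrix}$. Applying the Schur complement formula to each principal minor gives, whenever $Y_{00} \ne 0$,
\begin{equation*}
    \Phi(x_0 p)(Y) \ = \ Y_{00} \cdot \Phi(p)(Y' - Y_{00}^{-1} y y^\transpose).
\end{equation*}
The hyperbolicity cone decomposes as $H(x_0 p) = \{\alpha \ge 0\} \times H(p)$, so spectral containment for $x_0 p$ requires an eigenvalue vector $(\lambda_0, \lambda')$ of $Y$ with $\lambda_0 \ge 0$ and $\lambda' \in H(p)$. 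The plan is to extract from the factorization (i) that $Y_{00} > 0$, using the explicit factor $(Y_{00} - s)$ in $\Phi(x_0 p)(Y - sI)$; (ii) that the Schur complement $Z := Y' - Y_{00}^{-1} yy^\transpose$ lies in $H(\Phi(p))$, by tracking the remaining roots of $\Phi(x_0 p)(Y - sI)$; and (iii) an eigenvalue vector $\mu$ of $Z$ in $H(p)$ by spectral containment for $p$. A Cauchy-interlacing argument between the spectra of $Z$, $Y'$, and $Y$ then assembles an eigenvalue vector of $Y$ with the required block structure.

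Parts (2) and (5) should follow from perturbation arguments. At $\varepsilon = 0$ both polynomials reduce to an elementary symmetric polynomial and spectral containment holds by (1); for small $\varepsilon$ the hyperbolicity cones $H(P_\varepsilon)$ and $H(p_\varepsilon)$ move continuously, and since the perturbation term depends only on a principal submatrix (the first $k$ coordinates in (2), a specific four-variable instance in (5)), one can couple the spectrum of $A$ to that of $A_{[k]}$ via Cauchy interlacing and choose the appropriate permutation of eigenvalues to transport the spectral containment from $\varepsilon = 0$. Part (5), having only four variables, additionally admits a direct semi-algebraic verification. For (4), write $p = \sum_i c_i \prod_{j \ne i} x_j$ so that $P(X) = \tr(\diag(c) \adj(X))$; diagonalizing $A = U\Lambda U^\transpose$ gives
\begin{equation*}
    P(A - sI) \ = \ \sum_j d'_j \prod_{i \ne j}(\lambda_i - s), \qquad d'_j := (U^\transpose \diag(c) U)_{jj} \ge 0,
\end{equation*}
and $d'$ is a doubly-stochastic image of $c$. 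Thus $A \in H(P)$ forces $\lambda \in H(p')$ for $p' = \sum_j d'_j \prod_{i \ne j} x_i$. The task then reduces to showing that some permutation of $\lambda$ lies in $H(p)$ itself, exploiting the hypothesis $p \gg e_{n-2}$ to control how $H(p')$ sits relative to $H(p)$ as $d'$ ranges over doubly-stochastic images of $c$, together with the freedom to permute eigenvalues.

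The main obstacle is expected to be part (4): turning the interlacing hypothesis $p \gg e_{n-2}$ into a description of $H(p)$ robust under the Schur--Horn action $c \mapsto d'$ on coefficients, and then selecting the correct permutation of the eigenvalue vector. A secondary difficulty appears in the $x_0 p$ closure, where step (ii) demands a careful argument that the Schur complement $Z$ actually belongs to $H(\Phi(p))$ rather than only satisfying $\Phi(p)(Z) \ge 0$.
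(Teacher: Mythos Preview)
Your arguments for parts (1) and (3) are correct and coincide with the paper's. Your outline for the closure under $p\mapsto x_0p$ is also the right architecture (Schur complement, then a rank-one correction handled by Weyl's inequality), with one correction: there is no clean linear factor $(Y_{00}-s)$ in $\Phi(x_0p)(Y-sI)$, because the Schur-complement factor involves $(Y_{00}-s)^{-1}$. The paper instead obtains $Y_{00}>0$ on the interior of $H(\Phi(x_0p))$ from the diagonal projection $\pi(Y)\in H(x_0p)$ (\Cref{cor:diag}), and then handles step~(ii) by the trick $(Y+tJ)/0 = Y/0 + tI_n$ with $J=\diag(0,I_n)\in H(\Phi(x_0p))$.

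The genuine gap is in parts (2) and (5). A perturbation argument cannot work: spectral containment is a \emph{universal} statement over $A\in H(P_\varepsilon)$, not an open condition in $\varepsilon$. A matrix on the boundary of $H(P_0)$ whose eigenvalue vector sits on $\partial H(p_0)$ can, after perturbation, remain in $H(P_\varepsilon)$ while every permutation of its spectrum leaves $H(p_\varepsilon)$. What is needed is a \emph{quantitative} inequality $\max_{\pi}p(\pi(\lambda))\ge P(X)$, which the paper isolates as the \emph{Schur--Horn property}. The paper proves this property for $\pm e_d(x_1,\dots,x_k)$ via a wedge-power ``derivation'' $D^{k,d}X$ whose diagonal entries and eigenvalues realise exactly the two sides, and for $x_1(x_2+x_3)\in\R[x_1,\dots,x_4]$ by an explicit convex-hull computation. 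It then shows that adding any Schur--Horn polynomial to $e_d$ preserves the Schur--Horn property, and that Schur--Horn together with the interlacing hypothesis $e_{d-1}\!\ll\! p$ forces spectral containment (\Cref{lem:interlacing_e_k}). Your Cauchy-interlacing coupling of the spectra of $A$ and $A_{[k]}$ does not supply this inequality.

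For part (4) your adjugate computation is exactly the paper's proof that every degree-$(n-1)$ form has the Schur--Horn property: $P(A)=P^*(\Adj A)$ and the eigenvalues of $\Adj A$ are $\prod_{i\ne j}\lambda_i$, so classical Schur--Horn for the linear form $p^*$ gives $\max_\pi p(\pi(\lambda))\ge P(A)$. But your proposed final step, comparing $H(p')$ to $H(p)$ as $d'$ ranges over doubly-stochastic images of $c$, is both harder and unnecessary. The paper closes the argument with the same lemma as above: from $E_{n-2}\!\ll\! P$ one gets $\lambda\in H(e_{n-2})$, and then $p(\pi(\lambda))\ge P(A)>0$ together with $e_{n-2}\!\ll\! p$ forces $\pi(\lambda)$ into the interior of $H(p)$. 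So the missing unifying idea across (2), (4), (5) is precisely this Schur--Horn property and the interlacing lemma that converts it into spectral containment.
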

While this property may seem mysterious, we conjecture that it is in fact ubiquitous:
\begin{conj}\label{conj:eigenvalue_containment_strong}
    Every homogeneous multaffine stable polynomial has the spectral containment property.
\end{conj}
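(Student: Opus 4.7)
The plan is to reduce the conjecture to a geometric statement about $H(P)$ and then attempt a deformation argument inside the isospectral orbit of $A$.

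First, I observe that for any $v \in \R^n$ one has $P(\Diag(v) - tI) = p(v - t\1)$, so $\Diag(v) \in H(P)$ if and only if $v \in H(p)$. Hence the conclusion ``$\lambda^\sigma \in H(p)$ for some permutation $\sigma$'' is equivalent to the assertion that every $A \in H(P)$ admits an orthogonal diagonalization $U^\transpose A U = \Diag(\lambda^\sigma)$ that itself lies in $H(P)$. Corollary~\ref{cor:diag} already gives $\pi(A) \in H(p)$, equivalently $\Diag(\pi(A)) \in H(P)$, and by the Schur--Horn theorem $\pi(A)$ lies in the permutohedron of the eigenvalue vector of $A$. The task is thus to pass from a diagonal matrix in $H(P)$ whose diagonal lies in the convex hull of orderings of the spectrum, to one whose diagonal is an actual ordering of the spectrum.

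My first attempt would be to construct a continuous path $\gamma : [0,1] \to H(P)$ inside the isospectral orbit $\mathcal{O}_\lambda = \{U^\transpose \Diag(\lambda) U : U \in O(n)\}$ connecting $A$ to some $\Diag(\lambda^\sigma)$. The Toda and QR flows are natural candidates, since they preserve the spectrum and converge from any symmetric starting matrix to a diagonal limit whose entries are some ordering of the spectrum. The non-trivial step would be to verify that such a flow stays inside $H(P)$; one might hope to show this via a Lyapunov/monotonicity argument, possibly invoking the hyperbolic Fischer--Hadamard and Koteljanskii inequalities (\Cref{thm:hadamard-fischer,thm:kota}) to control the relevant lpm quantities along the trajectory.

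A complementary approach is induction on $n$ via the restrictions $P|_{[n]\setminus\{i\}} = \partial P / \partial X_{ii}$ from \Cref{def:restriction}: these are minor lifts of the stable multiaffine polynomials $\partial p / \partial x_i$, and the spectra of the principal submatrices $A_{[n]\setminus\{i\}}$ interlace the spectrum of $A$ by Cauchy interlacing. The main obstacle I anticipate---and presumably the reason the statement remains only a conjecture---is that $H(P)$ is not $O(n)$-invariant unless $P = E_k$, so isospectral flows do not a priori respect $H(P)$; and in the inductive approach, the permutations produced at different principal minors need not glue into a consistent ordering of the spectrum of $A$. Either route would require a genuinely new idea pinning down which permutation to choose, and the case-by-case nature of \Cref{thm:eigenvalue_containment} suggests that no uniform canonical choice works across all stable multiaffine polynomials.
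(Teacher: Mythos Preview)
The statement you are addressing is \emph{Conjecture~\ref{conj:eigenvalue_containment_strong}}, and the paper does not prove it; it is listed explicitly as an open problem, with only the special cases in \Cref{thm:eigenvalue_containment} established. So there is no ``paper's own proof'' to compare against. Your write-up is not a proof either, and to your credit you say so: both of your proposed routes end in obstacles you identify yourself (isospectral flows need not stay inside $H(P)$ because $H(P)$ is not $O(n)$-invariant; the inductive step via principal submatrices does not provide a consistent global permutation). Those are exactly the reasons the conjecture is open, and your reformulation of the problem---$A\in H(P)$ should be orthogonally conjugate to some diagonal matrix still in $H(P)$---is correct and useful, but it is a restatement, not progress.

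It may help to contrast your strategies with what the paper actually does for the special cases it can handle. The paper does \emph{not} attempt any continuous isospectral deformation. Instead it isolates two mechanisms: (i) an interlacing criterion (\Cref{lem:interlacing_e_k}) saying that if $e_{d-1}$ interlaces $p$ and one can produce, for each $A\in H(P)$, an eigenvalue vector $\lambda$ with $p(\lambda)\ge P(A)$, then spectral containment follows; and (ii) a ``Schur--Horn property'' (\Cref{sec:schurhornprop}) which supplies exactly that inequality by reducing the value comparison to the classical Schur--Horn theorem applied to auxiliary matrices (e.g.\ adjugates, exterior-power derivations). None of these arguments track a path in the orbit $\mathcal{O}_\lambda$; they instead compare \emph{values} of $p$ and $P$ and then invoke interlacing to locate $\lambda$ in $H(p)$. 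If you want to push on the general conjecture, the paper's evidence suggests that controlling $p(\lambda)$ versus $P(A)$, rather than controlling a trajectory inside $H(P)$, is the more tractable handle---though of course no one has made that work in general either.
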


If \Cref{conj:eigenvalue_containment_strong} is true, then
\Cref{thm:minor_lift_stable} implies that for every $k$-locally PSD matrix $A$
and homogeneous multiaffine stable polynomial $p$, some eigenvalue vector of
$A$ is contained in $H(p)$.  This may seem like a very strong condition on
the eigenvalues of $A$, but as we show below it is equivalent to the fact that every
eigenvalue vector of $A$ is contained in $H(e_k)$, which we already observed
above.  Let $\fS_n$ denote the symmetric group on $n$ letters and let it act
on $\mathbb{R}^n$ by permuting coordinates. 

\begin{theorem}\label{thm:permutation}
 Let $e_k\in\R[\x]$ be the elementary symmetric polynomial of degree $k$ and $h\in\R[\x]$ be a nonzero homogeneous multiaffine stable polynomial of degree $k$.
 If $\v \in H(e_k)$, then there exists a permutation $\tau \in \fS_n$ such that $\tau(\v) \in H(h)$.
\end{theorem}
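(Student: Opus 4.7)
The plan is to pass to the univariate setting and invoke the Marcus--Spielman--Srivastava theory of interlacing families. For each $\tau\in\fS_n$, define
\[
    p_\tau(t) \;:=\; h(\tau(\v)-t\mathbf{1}) \in \R[t].
\]
Since $h$ is stable, hence hyperbolic with respect to $\mathbf{1}$, each $p_\tau$ is a real-rooted polynomial of degree $k$; the desired conclusion $\tau(\v)\in H(h)$ is precisely the statement that $p_\tau$ has only nonnegative roots.

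Writing $h=\sum_{|J|=k}a_J\x^J$ with $a_J\geq 0$, a direct count of the permutations sending a fixed $k$-subset $J$ to any $J'$ gives $\sum_{\tau\in\fS_n}h(\tau\x)=k!(n-k)!\bigl(\sum_J a_J\bigr)\,e_k(\x)$, so
\[
    \frac{1}{n!}\sum_{\tau\in\fS_n}p_\tau(t)\;=\;c\cdot e_k(\v-t\mathbf{1}), \qquad c=\tfrac{\sum_J a_J}{\binom{n}{k}}>0.
\]
By hypothesis $\v\in H(e_k)$, so this average has only nonnegative real roots. If one can establish that $\{p_\tau\}_{\tau\in\fS_n}$ forms an \emph{interlacing family} --- that is, every convex combination $\sum_\tau\lambda_\tau p_\tau$ with $\lambda_\tau\geq 0$ and $\sum\lambda_\tau=1$ is real-rooted --- then Marcus--Spielman--Srivastava yields $\tau^\star\in\fS_n$ whose polynomial $p_{\tau^\star}$ has smallest root at least that of the average, and hence nonnegative. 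This produces $\tau^\star(\v)\in H(h)$, completing the proof.

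The principal obstacle is verifying the interlacing-family property. A convex combination $\sum_\tau\lambda_\tau p_\tau$ equals $q_\lambda(\v-t\mathbf{1})$ for $q_\lambda(\x):=\sum_\tau\lambda_\tau\,h(\tau\x)$, which is multiaffine with nonnegative coefficients but generally fails to be stable, so real-rootedness of the restriction to the line $\{\v-t\mathbf{1}\}$ cannot be read off from a generic stability-preserver. My plan is to exhibit a common interlacer for $\{p_\tau\}$ --- a natural degree-$(k-1)$ candidate is $\partial_\mathbf{1}\,e_k(\v-t\mathbf{1})$, which interlaces the average by the usual derivative argument --- and then invoke the classical Obreschkoff-type theorem that convex combinations of polynomials sharing a common interlacer are real-rooted. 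The hypothesis $\v\in H(e_k)$ should be precisely what forces this candidate to interlace each $p_\tau$ individually.

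As a fallback, one can translate the problem to symmetric matrices: $\v\in H(e_k)$ is equivalent to $\Diag(\v)\in H(E_k)$, while $\tau(\v)\in H(h)$ is equivalent to $P_\tau\Diag(\v)P_\tau^\transpose\in H(\Phi(h))$ for a permutation matrix $P_\tau$. Combining the hyperbolic Fischer--Hadamard inequality (\Cref{thm:hadamard-fischer}), which controls $H(h)$-membership via diagonals of conjugates of $\Diag(\v)$, with the Schur--Horn theorem identifying the diagonals of $\{U\Diag(\v)U^\transpose:U\in O(n)\}$ as the permutahedron of $\v$, suggests an extremization argument on the (nonempty) intersection $H(\Phi(h))\cap\{U\Diag(\v)U^\transpose\}$ designed to drive the $\pi$-image to a vertex of the permutahedron, necessarily of the form $\tau(\v)$.
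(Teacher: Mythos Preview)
Your main line has a genuine gap: the family $\{p_\tau\}_{\tau\in\fS_n}$ does \emph{not} in general admit a common interlacer, so not every convex combination is real-rooted. (What you have defined is the common-interlacer condition, which is strictly stronger than the tree-structured Marcus--Spielman--Srivastava notion of an interlacing family.) Take $n=4$, $k=2$, $h=x_1x_2$, and $\v=(3,2,1,-1)$. One checks $e_2(\v-t\mathbf{1})=6t^2-15t+5$ has two positive roots, so $\v\in H(e_2)$. Among the $p_\tau$ are $(3-t)(2-t)$ and $(1-t)(-1-t)=t^2-1$, and
\[
\tfrac12(3-t)(2-t)+\tfrac12(1-t)(-1-t)\;=\;t^2-\tfrac{5}{2}t+\tfrac{5}{2}
\]
has discriminant $-\tfrac{15}{4}<0$. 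Since the root-pairs $\{2,3\}$ and $\{-1,1\}$ are separated, no degree-one polynomial can interlace both; in particular your candidate $\partial_{\mathbf{1}}e_k(\v-t\mathbf{1})$ fails, and the hypothesis $\v\in H(e_k)$ does not rescue it.

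The paper's proof is a branching argument of the same flavor, but it supplies exactly the recursive structure you are missing. Two ingredients drive it. First, for a \emph{single transposition} $\tau=(ij)$ the polynomials $h$ and $\tau(h)$ \emph{do} share a common interlacer, namely $(\partial_{x_i}+\partial_{x_j})h$; consequently $H(\lambda h+\mu\,\tau(h))\subset H(h)\cup H(\tau(h))$ for $\lambda,\mu\ge 0$. Second, the group-algebra identity
\[
\sum_{g\in\fS_n}e_g \;=\; \prod_{j=2}^{n}\prod_{i=1}^{j-1}\Bigl(1+\tfrac{1}{j-i}\,e_{(ij)}\Bigr)
\]
in $\Q[\fS_n]$ lets one write $c\cdot e_k=\bigl(\prod_{\ell=1}^{r}(1+\lambda_\ell e_{\tau_\ell})\bigr)h$ with $r=\binom{n}{2}$, transpositions $\tau_\ell$, and $\lambda_\ell>0$. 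Peeling off one factor at a time yields a depth-$r$ binary tree: starting from $\v\in H(e_k)$, at each step the current point is replaced by itself or its image under a single transposition, and after $r$ steps one lands in $H(h)$. So the ``interlacing family'' intuition is right, but the tree must branch by transpositions, not all of $\fS_n$ at once. Your fallback via Schur--Horn and the hyperbolic Fischer--Hadamard inequality is not developed enough to assess; the paper does not use those results here.
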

In \Cref{sec:schurhornprop} we will also show that
\Cref{conj:eigenvalue_containment_strong} would be implied in many cases by
another conjecture generalizing the classical Schur--Horn Theorem.

\section{The Minor Lift Map and Stability Preservers}\label{sec:minor_lift}
Our goal in this section is to prove \Cref{thm:minor_lift_stable}. 
We first explain how to construct the minor lift map via partial derivatives of the determinant. 
Let $p\in\R[\x]$ be a multiaffine polynomial. The \emph{dual} of $p$ is
\begin{equation}\label{eqn:dual}
    p^*(x) := x_1\cdot x_2\cdots x_n\cdot p\Bigl(\frac{1}{x_1}, \frac{1}{x_2},
    \dots, \frac{1}{x_n}\Bigr) .
\end{equation}
For any polynomial $p \in \R[x_1, \dots, x_n]$, we consider the differential
operator $p^*\left(\frac{\partial}{\partial X_{11}}, \frac{\partial}{\partial
X_{22}}, \dots, \frac{\partial}{\partial X_{nn}}\right)$.  For instance, if
$p= x^S = \prod_{i \in S}x_i$ is a monomial, then the associated differential
operator is $\prod_{i \notin S}\frac{\partial}{\partial X_{ii}}$.
Applying
the differential operator associated to $x^S$ to $\det(X)$
yields
\[
    \Bigl(\prod_{i \notin S}\frac{\partial}{\partial X_{ii}}\Bigr) \det(\X) = \det(X_{S}).
\]
By linearity, we then obtain that
\[
    P = \left(p^*\left(\frac{\partial}{\partial X_{11}},
    \frac{\partial}{\partial X_{22}}, \dots, \frac{\partial}{\partial
    X_{nn}}\right)\right) \det(X) \, .
\]

This formulation of the minor lift map will allow us to easily apply the theory of stability preservers.

\begin{remark}\label{rmk:minor_lift_dual}
    The minor lift operation interacts nicely with dualization.  If $p$ is a
    multiaffine polynomial, then
    \[
        \Phi(p^*)|_X = \det(X)\cdot\Phi(p)|_{X^{-1}}.
    \]
    Here, $\cdot|_X$ denotes the evaluation of a polynomial at $X$.

    This result follows directly from the Jacobi complementary minors
    identity, found in \cite{MR1411115}, which states that $\det(X|_{S^c}) =
    \det(X^{-1}|_S) \det(X)$.
    This is a matrix analogue of~\eqref{eqn:dual}.
\end{remark}

Before we go on, we need the following facts about hyperbolicity cones that can be found in \cite{MR2738906}.

\begin{lemma}\label{lem:hyp1}
    Let $p\in\R[\x]$ be a homogeneous polynomial and $K\subset\R^n$ a cone.
    The following are equivalent:
    \begin{enumerate}
        \item $p$ is hyperbolic with respect to all $\a\in K$, and
        \item $p(\v+\textnormal{i}\a)\neq0$  for all $\v\in\R^n$ and $\a\in K$.
    \end{enumerate}
\end{lemma}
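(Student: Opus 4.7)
My plan is a standard two-direction argument, using homogeneity in one direction and the reality of coefficients (via complex conjugation) in the other.

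For the implication $(1) \Rightarrow (2)$, I would fix $\a \in K$ and $\v \in \R^n$ and use hyperbolicity to factor
\[
p(\v - t\a) \ = \ c \prod_{j=1}^{d}(t-r_j)
\]
with $c = (-1)^d p(\a) \neq 0$ and $r_1,\dots,r_d \in \R$. Evaluating at $t = -\textnormal{i}$ gives
\[
p(\v + \textnormal{i}\a) \ = \ c \prod_{j=1}^{d}(-\textnormal{i}-r_j),
\]
and each factor is nonzero because the $r_j$ are real. Hence $p(\v + \textnormal{i}\a) \neq 0$.

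For the implication $(2) \Rightarrow (1)$, I first need $p(\a) \neq 0$ for $\a \in K \setminus \{0\}$. This follows by taking $\v = 0$: by homogeneity of degree $d$, $p(\textnormal{i}\a) = \textnormal{i}^d\, p(\a)$, which is nonzero by hypothesis, hence $p(\a) \neq 0$. Now fix $\a \in K$ and $\v \in \R^n$ and suppose, for contradiction, that $p(\v - t\a)$ has a non-real root $\mu = \alpha + \textnormal{i}\beta$ with $\beta \neq 0$. Since $p$ has real coefficients, $\bar\mu = \alpha - \textnormal{i}\beta$ is also a root; replacing $\mu$ by $\bar\mu$ if needed, I may assume $\beta < 0$. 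Then $p\bigl((\v - \alpha\a) + \textnormal{i}(-\beta)\a\bigr) = 0$ with $-\beta > 0$, and since $K$ is closed under positive scaling, $(-\beta)\a \in K$, contradicting hypothesis $(2)$.

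The argument is essentially routine; the only minor subtlety worth flagging is the sign bookkeeping needed to place the nonzero scalar multiple of $\a$ in the correct half-plane (i.e., into $K$ rather than $-K$), which is handled by switching between $\mu$ and $\bar\mu$. No deeper tools are required beyond homogeneity, the factorization of a univariate real-rooted polynomial, and complex conjugation of real polynomials.
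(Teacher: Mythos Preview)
Your argument is correct. The paper does not actually give its own proof of this lemma; it simply cites it as a standard fact from the literature (reference \cite{MR2738906}), so there is no in-paper proof to compare against. What you wrote is the standard proof, and it goes through cleanly.
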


\begin{lemma}\label{lem:hyp2}
 Let $p\in\R[\x]$ be hyperbolic with respect to $\a\in\R^n$. Then $p$ is hyperbolic with respect to every point in the connected component of $\{\v\in\R^n:\, p(\v)\neq0\}$ that contains $\a$.
\end{lemma}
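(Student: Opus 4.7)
The plan is to pick a continuous path inside $C$ from $\a$ to the given point and track the zeros of an associated one-parameter family of polynomials along the path. By \Cref{lem:hyp1} applied to the cone $K=\{\lambda y:\lambda>0\}$, combined with the homogeneity of $p$, hyperbolicity of $p$ with respect to a direction $y$ (with $p(y)\neq 0$) is equivalent to $p(\v+iy)\neq 0$ for all $\v\in\R^n$; equivalently, for every fixed $\v$ the polynomial $z\mapsto p(\v+izy)$ has only purely imaginary zeros in $\C$.

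Concretely, given any $y$ in the connected component $C$, pick a continuous path $\gamma:[0,1]\to C$ with $\gamma(0)=\a$ and $\gamma(1)=y$, and for each $\v\in\R^n$ and $s\in[0,1]$ set $g_s(z):=p(\v+iz\gamma(s))$. This is a polynomial in $z$ of degree $d=\deg p$ with nonvanishing leading coefficient $i^d p(\gamma(s))$ throughout, so its zeros are bounded and vary continuously in $s$. Since $p$ has real coefficients, $\overline{g_s(z)}=g_s(-\bar z)$, so the zeros of $g_s$ are symmetric under reflection across the imaginary axis. At $s=0$ all zeros of $g_0$ are purely imaginary by the hyperbolicity of $\a$, and the goal is to show this persists all the way to $s=1$.

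If at some $s$ every zero of $g_s$ is simple, the implicit function theorem presents each zero as a holomorphic function of $s$ locally, and uniqueness of this continuation, together with the symmetry $z\mapsto -\bar z$, forces the moving zero to coincide with its own reflection and hence to stay on the imaginary axis. The main obstacle is handling those $s$ at which $g_s$ has multiple zeros, since such collisions a priori allow two axial zeros to split off into a complex-conjugate pair off the axis. I plan to resolve this with a density argument in $\v$: the discriminant of $g_s$ with respect to $z$ is a polynomial in $(\v,s)$ which is not identically zero (since at $s=0$ a generic $\v$ already yields distinct real roots of $p(\v-t\a)$), so for $\v$ in an open dense subset of $\R^n$ this discriminant viewed as a polynomial in $s$ on $[0,1]$ is nonzero and vanishes at only finitely many points. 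For such generic $\v$ the zeros of $g_s$ are simple, and stay on the imaginary axis, for all but finitely many $s$; continuity of zeros as multisets then places the finitely many exceptional multiple zeros on the axis as well.

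A final continuity-in-$\v$ argument extends the conclusion from the dense generic set of $\v$ to all of $\R^n$, showing that for every $\v$ the zeros of $g_1(z)=p(\v+izy)$ are purely imaginary. By the characterization recalled in the first paragraph, this is exactly the statement that $p$ is hyperbolic with respect to $y$, completing the proof.
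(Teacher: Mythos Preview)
The paper does not prove this lemma; it simply records it as a known fact taken from \cite{MR2738906}. So there is no in-paper proof to compare against, and the question is whether your argument stands on its own.

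Your argument has a genuine gap in the handling of collisions. You correctly identify that the only obstruction to the ``simple zeros stay on the axis by uniqueness of continuation'' argument is the possibility of multiple zeros, and you propose to handle this by choosing $\v$ generically so that the discriminant of $g_s$ vanishes at only finitely many values of $s$. (A minor point: for the discriminant to be a polynomial in $s$ you need $\gamma$ to be piecewise polynomial, which you should say.) But even granting this, the simple-root argument only shows that zeros stay on the imaginary axis \emph{within each connected component} of the set where all zeros are simple. It does not propagate the conclusion across a collision time $s_1$: a double zero at $z_0\in i\R$ can, perfectly consistently with the symmetry $z\mapsto -\bar z$, split for $s>s_1$ into a pair $\{w(s),-\overline{w(s)}\}$ off the axis. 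Your uniqueness-of-continuation reasoning needs an on-axis starting point inside each component, and after the first collision you have not produced one. The concluding ``continuity of zeros as multisets'' step only recovers the on-axis property \emph{at} the finitely many bad times from the neighboring good times, not the other way around.

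The classical proof avoids this by using hyperbolicity with respect to $\a$ in a two-variable way rather than one $\v$ at a time. For $b$ in the component of $\a$ and $\mu$ with $\Im\mu>0$, consider $q_\mu(\lambda)=p(\v+\mu\a+\lambda b)$. Hyperbolicity with respect to $\a$ (applied at the real point $\v+\lambda b$) shows $q_\mu$ has no real roots in $\lambda$, so as $\mu$ varies in the upper half-plane the number of roots of $q_\mu$ in each open half-plane is constant. For $\mu=iR$ with $R\to\infty$, homogeneity reduces to the roots of $p(\a+\lambda'b)$, which are negative reals because $b$ lies in the component of $\a$ (the roots of $p(b-t\a)$ are real and cannot cross $0$ along a path in $\{p\neq 0\}$). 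Hence all roots of $q_\mu$ lie in the open lower half-plane for every $\mu$ with $\Im\mu>0$; letting $\mu\to 0$ and using that the limit polynomial $p(\v+\lambda b)$ has real coefficients forces all its roots to be real. This is the missing global input that your path-tracking approach lacks.
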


Our first step is the following observation:

\begin{lemma}\label{lem:psd_stable}
 Let $P\in\R[\X]$ be a homogeneous polynomial. Then $P$ is PSD-stable if and only if the following two conditions hold:
\begin{enumerate}
    \item $P(A)\neq0$ for all positive definite matrices $A$;
    \item $P(\Diag(x_1, \dots, x_n) + M)\in\R[\x]$ is stable for every real symmetric matrix $M$.
\end{enumerate}
\end{lemma}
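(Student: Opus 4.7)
The forward direction will be immediate from unpacking the definition of PSD-stability. If $P$ is PSD-stable and $A$ is positive definite, then $iA$ has zero real part and positive definite imaginary part $A$, so $P(iA)\neq 0$; homogeneity of $P$ then gives $P(A)\neq 0$, proving (1). For (2), if $x_j = a_j + i b_j$ with $b_j > 0$ for all $j$, then $\Diag(\x) + M$ has real part $\Diag(a_1,\dots,a_n) + M \in \Sym$ and positive definite imaginary part $\Diag(b_1,\dots,b_n)$, placing it in the Siegel upper half-plane, so $P(\Diag(\x)+M)$ cannot vanish there, establishing stability in $\x$.

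For the reverse direction, my plan is to invoke \Cref{lem:hyp1} and \Cref{lem:hyp2}, viewing $\Sym$ as a finite-dimensional real vector space. The target, via \Cref{lem:hyp1} applied to the cone of positive definite matrices, is to show that $P$ is hyperbolic with respect to every positive definite matrix. I will proceed in two steps: first establish hyperbolicity of $P$ with respect to the identity $I$, then propagate it to the entire positive definite cone. For the first step, fix an arbitrary symmetric matrix $V$ and set $M = V$ in condition (2), so that $P(\Diag(\x) + V)$ is a stable polynomial in $\x$. Substituting $x_j = i b_j$ with $b_j > 0$ then yields $P(V + i \Diag(b_1,\dots,b_n)) \neq 0$ for all symmetric $V$; specializing $b_1 = \cdots = b_n = 1$ and applying \Cref{lem:hyp1} shows that $P$ is hyperbolic with respect to $I$.

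For the propagation step, condition (1) says $P$ is nonvanishing on the positive definite cone. This cone is open and connected in $\Sym$, hence lies inside a single connected component of the nonvanishing locus $\{V \in \Sym : P(V) \neq 0\}$, which also contains $I$. \Cref{lem:hyp2} then upgrades hyperbolicity from $I$ to every matrix in this component, in particular to every positive definite matrix. A final application of \Cref{lem:hyp1}, with $K$ equal to the positive definite cone, gives $P(V + iA) \neq 0$ for every symmetric $V$ and every positive definite $A$, which is precisely PSD-stability.

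The only delicate point is the transition from hyperbolicity with respect to a single positive definite matrix (namely $I$, extracted from condition (2)) to hyperbolicity with respect to the whole positive definite cone; this is exactly what condition (1) delivers, via the connectedness of the positive definite cone and \Cref{lem:hyp2}. Neither condition alone is enough, which is morally why the lemma requires both.
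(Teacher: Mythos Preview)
Your proof is correct and follows essentially the same approach as the paper's: the forward direction is identical, and for the reverse direction both you and the paper first extract hyperbolicity with respect to $I$ from condition~(2), then use condition~(1) together with \Cref{lem:hyp2} and \Cref{lem:hyp1} to propagate to the full positive definite cone. The only cosmetic difference is that the paper obtains hyperbolicity at $I$ by noting that the univariate polynomial $P(tI+M)$ is stable and hence real-rooted, whereas you phrase it via the nonvanishing of $P(V+iI)$ and \Cref{lem:hyp1}; these are equivalent.
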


\begin{proof}
    First assume that $P$ is PSD-stable and let $A$ be a positive definite
    matrix. By definition we have $P(\mathrm{i} A)\neq0$. Since $P$ is
    homogeneous, this implies that $P(A)\neq0$. Further let
    $z_i=a_i+ \mathrm{i} b_i$ in the upper half-plane. Then $P(\Diag(z_1,
    \dots, z_n) + M)$ is nonzero for any real symmetric matrix $M$, since
    $\Diag(b_1, \dots, b_n)$ is a positive definite matrix.

    For the other direction we first observe that condition (2) implies that
    $P$ is hyperbolic with respect to the identity matrix. Indeed, the
    univariate polynomial $P(tI+M)$ is stable and thus real-rooted for every
    real symmetric matrix $M$. Now condition (1) together with Lemmas
    \ref{lem:hyp1} and \ref{lem:hyp2} imply the claim.
\end{proof}

\begin{proof}[Proof of \Cref{thm:minor_lift_stable}]
    Let $p\in\R[\x]$ be multiaffine, homogeneous and stable. Then by
    \cite[Thm.~6.1]{halfplane} all nonzero coefficients of $p$ have the same
    sign. Without loss of generality assume that all are positive. Then
    $P=\Phi(p)$ is clearly positive on positive definite matrices since the
    minors of a positive definite matrix are positive. Thus by
    \Cref{lem:psd_stable}, it remains to show that $$P(\Diag(x_1, \dots, x_n)
    + M)=\left(p^*\left(\frac{\partial}{\partial x_{1}},
    \frac{\partial}{\partial x_{2}}, \dots, \frac{\partial}{\partial
    x_{n}}\right)\right) \det(\Diag(x_1, \dots, x_n) + M)$$ is stable for
    every real symmetric matrix $M$. The polynomial $\det(\Diag(x_1, \dots,
    x_n) + M)$ is stable as well as $p^*$ by \cite[Prop.~4.2]{halfplane}. Thus
    the polynomial $P(\Diag(x_1, \dots, x_n) + M)$ is also stable by
    \cite[Thm.~1.3]{weyl}.

    Let $A \in \PSD_k \subseteq \Sym$ be $k$-locally PSD. Then for every
    $k$-subset $S \subseteq [n]$, we have $\det((A+t\I)|_S) > 0$ for all $t >
    0$. Hence, if $p$ has degree $k$ with all coefficients positive, then
    $P(A-t\I) > 0$ for all $t < 0$ and hence all roots are non-negative. This
    implies that $A \in H(P)$.
\end{proof}

\begin{remark}
    Given a multiaffine homogeneous stable polynomial
    $p\in\R[x_1,\ldots,x_n]$, the minor lift map gives a hyperbolic polynomial
    $P$ in the entries of a symmetric $n\times n$ matrix whose restriction to
    the diagonal equals to $p$. Such polynomials can also be constructed for
    stable polynomials that are not necessarily multiaffine. Since we are
    mainly interested in multiaffine polynomials, we only briefly sketch one
    possible such construction. To a stable homogeneous polynomial
    $p\in\R[x_1,\ldots,x_n]$ one can find a multiaffine stable polynomial
    $q\in\R[z_{11},\ldots,z_{1d_1},\ldots, z_{nd_n}]$ such that we can recover
    $p$ from $q$ by substituting each variable $z_{ij}$ by $x_i$, see \cite[\S
    2.5]{halfplane}. This polynomial $q$ is called a \emph{polarization} of
    $p$. If we restrict the minor lift of $q$ to suitable block-diagonal
    matrices, we obtain a hyperbolic polynomial with the desired properties
    for $p$.
\end{remark}

\begin{remark}
    Using \cite[Thm.~3.2]{branden2020lorentzian} one can show that the
    analogous statement to \Cref{thm:minor_lift_stable} for \emph{Lorentzian
    polynomials}, a recent generalization of stable polynomials,  holds as
    well.
\end{remark}

\section{Hyperbolic Hadamard-Fischer Inequality}\label{sec:hadamard-fischer}
Our goal in this section is to prove \Cref{thm:hadamard-fischer}.  We start by
making some general observations about supporting hyperplanes of the
hyperbolicity cone:
\begin{lemma}\label{lem:dual}
  Let $p\in\R[x]$ be hyperbolic with respect to $a\in\R^n$ and $H_a(p)$ the corresponding hyperbolicity cone. Assume that $p(a)>0$ and that $p$ is reduced in the sense that all its irreducible factors are coprime. Then we have the following:
  \begin{enumerate}
    \item For all $v\in H_a(p)$ the linear form $L_v=\langle\nabla p(v), x\rangle$ is nonnegative on $H_a(p)$.
    \item If $v\in\partial H_a(p)$, then $L_v(v)=0$.
    \item If $b\not\in H_a(p)$, then there exists $v\in\partial H_a(p)$ such that $L_v(b)<0$.
  \end{enumerate}
\end{lemma}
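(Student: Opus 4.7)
The plan is to prove the three parts in order, using as the main tool the factorization of the univariate restrictions $p(v-ta)$ coming from the hyperbolicity of $p$. For part (1), I fix $v\in H_a(p)$ and first treat the case $w\in\mathrm{int}(H_a(p))$. By \Cref{lem:hyp2}, $p$ is hyperbolic with respect to $w$ and its hyperbolicity cone $H_w(p)$ coincides with $H_a(p)$, so $v\in H_w(p)$. The definition of the hyperbolicity cone yields a factorization
\[
  p(v-tw) \;=\; p(w)\,\prod_{i=1}^{d}(\tau_i-t),\qquad \tau_i\geq 0,
\]
with $p(w)>0$: because $w$ is interior all roots of $p(w-ta)$ are strictly positive, whence $p(w)=p(a)\prod t_i>0$ using $p(a)>0$. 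Differentiating both sides at $t=0$ gives
\[
  L_v(w) \;=\; \nabla p(v)\cdot w \;=\; p(w)\,\sum_{i=1}^{d}\prod_{j\neq i}\tau_j \;\geq\; 0.
\]
Since $L_v$ is linear (hence continuous) in $w$, approximating an arbitrary $w\in H_a(p)$ by interior points extends the inequality to all of $H_a(p)$, which proves (1).

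For part (2), a point $v\in H_a(p)$ lies in the interior precisely when all roots of $p(v-ta)$ are strictly positive, so $v\in\partial H_a(p)$ forces at least one root to vanish, that is, $p(v)=0$. Euler's identity for the homogeneous polynomial $p$ then gives $L_v(v)=\nabla p(v)\cdot v=\deg(p)\cdot p(v)=0$.

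For part (3), let $b\not\in H_a(p)$, so that $p(b-ta)$ has a negative root; let $t_1<0$ be the smallest such root and set $v=b-t_1a\in\partial H_a(p)$. Writing $b=v+t_1a$ and using parts (1) and (2) yields
\[
  L_v(b) \;=\; L_v(v)+t_1\,L_v(a) \;=\; t_1\,L_v(a).
\]
If $t_1$ is a simple root of $p(b-ta)$, then $\nabla p(v)\cdot a\neq 0$, and (1) promotes this to $L_v(a)>0$, giving $L_v(b)<0$. The principal obstacle is the degenerate case where $t_1$ is a multiple root: then $L_v(a)=0$ and the naive $v$ yields only $L_v(b)=0$. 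My remedy is to perturb $a$ to a nearby $a'$. Because $p$ is reduced, the discriminant of $p(b-ta')$ with respect to $t$ is a polynomial in $a'$ that is not identically zero (a generic line in $\R^n$ meets $V(p)$ transversally at smooth points), so I can choose $a'$ arbitrarily close to $a$ for which $p(b-ta')$ has only simple roots. By \Cref{lem:hyp2}, $a$ and $a'$ lie in the same connected component of $\{p\neq 0\}$ and so $H_{a'}(p)=H_a(p)$. The smallest root $t_1'$ of $p(b-ta')$ is then simple, close to $t_1$ and hence still negative; applying the argument above with $a'$ produces $v'=b-t_1'a'\in\partial H_a(p)$ satisfying $L_{v'}(b)=t_1'\,L_{v'}(a')<0$. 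The main difficulty of the lemma is therefore this tangential/multiple-root case in (3), and it is exactly here that the reducedness hypothesis on $p$ is needed.
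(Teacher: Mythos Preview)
Your proof is correct, but the route differs from the paper's in parts (1) and (3).

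For (1), the paper argues geometrically: when $\nabla p(v)\neq 0$, the hyperplane $\{L_v=0\}$ is tangent to the convex body $H_a(p)$ at $v$, and since $p>0$ on the interior, $L_v$ has constant sign there. Your argument is more direct: you fix $w$ interior, use $H_w(p)=H_a(p)$ to factor $p(v-tw)=p(w)\prod(\tau_i-t)$ with $\tau_i\ge 0$, and read off $L_v(w)\ge 0$ from the $t$-derivative at $0$. Your computation is self-contained and sidesteps the slightly delicate point (implicit in the paper) of what ``tangent at $v$'' means when $v$ is interior.

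For (3), the paper chooses a generic interior point $e$ so that the segment $[e,b]$ meets $\partial H_a(p)$ at a smooth point $v$; then $L_v(e)>0$ and $L_v(v)=0$ force $L_v(b)<0$. You instead fix the direction and take $v=b-t_1 a$ with $t_1<0$ the smallest root; when $t_1$ is a multiple root you perturb the hyperbolicity direction $a\mapsto a'$ so that $p(b-ta')$ has only simple roots, using reducedness to guarantee the discriminant is not identically zero. Both arguments invoke reducedness for the same reason (to avoid a nowhere-dense degenerate locus), but the paper perturbs the base point of the separating segment while you perturb the direction of hyperbolicity. The paper's version is a bit more geometric (supporting hyperplane at a smooth boundary point); yours stays entirely inside the univariate-restriction framework you set up in (1), which makes the three parts hang together nicely.
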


\begin{proof}
    Part (2) is just Euler's identity since $p$ vanishes on $\partial H_a(p)$. 
    If $\nabla p(v) = 0$, then (1) is trivial. Otherwise, the hyperplane $\{ x
    : \langle\nabla p(v), x\rangle = 0 \}$ is
    tangent to $\partial H_a(p)$ at $v$. Since $H_a(p)$ is convex and $p$
    positive on the interior of $H_a(p)$, this implies that $L_v(x) \ge 0$ for
    all $x \in H_a(p)$.
    In order to prove (3), we first note that by our assumption on $p$, the
    set of points $c\in\partial H_a(p)$ where $\nabla p(c) = 0$ is nowhere
    dense. Thus if $b\not\in H_a(p)$, then there is a point $e$ in the
    interior of $H_a(p)$ such that the line segment $[e,b]$ intersects
    $\partial H_a(p)$ in a smooth point $v$. Since $L_v(e)>0$ and $L_v(v)=0$,
    we have $L_v(b)<0$.
\end{proof}

We now apply the above observations to lpm polynomials. Recall that for a
partition $\Pi = \{S_1, \dots, S_m\}$ of $[n]$, we denote by
$\mathcal{D}_{\Pi}$ the vector space of block diagonal symmetric matrices with
blocks given by $\Pi$ and $\pi_{\Pi}$ is the orthogonal projection of $\Sym$
onto the subspace $\mathcal{D}_{\Pi}$. Further recall that we write $a\sim b$
for $a,b \in [n]$ if $a, b \in S_k$ for some $k=1,\dots,m$.

\begin{lemma}\label{lem:offblock}
    Fix a partition $\Pi=\{S_1, \dots, S_m\}$ of $[n]$ and let $B\subseteq
    [n]$ be any subset. Then for any $\sigma\in \fS_B$, we have $|\{b\in B |
    b\not\sim \sigma(b) \}|\ne 1$. 
\end{lemma}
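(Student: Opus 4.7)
The plan is to argue by contradiction, exploiting only the fact that $\sigma$ is a bijection of $B$, via a short pigeonhole observation. I would begin by supposing there is exactly one $b^* \in B$ with $b^* \not\sim \sigma(b^*)$, and let $S_i$ denote the block of $\Pi$ containing $b^*$ and $S_j$ the block containing $\sigma(b^*)$, so that $i \ne j$. The goal is then to produce a second element $c \in B$ with $c \not\sim \sigma(c)$, contradicting the uniqueness assumption.

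Setting $B_k := B \cap S_k$, the uniqueness assumption forces every $b \in B_i \setminus \{b^*\}$ to satisfy $\sigma(b) \sim b$, i.e., $\sigma(b) \in B_i$ (since $\sigma$ maps $B$ to $B$). Hence $\sigma$ injects $B_i \setminus \{b^*\}$ into $B_i$, giving $|B_i| - 1$ elements of $\sigma^{-1}(B_i)$. Because $\sigma$ is a bijection of $B$, the full preimage $\sigma^{-1}(B_i)$ has exactly $|B_i|$ elements, and $b^* \notin \sigma^{-1}(B_i)$ since $\sigma(b^*) \in B_j \ne B_i$; so there must exist some $c \in B \setminus (B_i \cup \{b^*\})$ with $\sigma(c) \in B_i$. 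Such a $c$ has $c \not\sim \sigma(c)$ and $c \ne b^*$, contradicting uniqueness.

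The argument is essentially pigeonhole, so the only real obstacle is notational rather than mathematical. I expect the role of this lemma is in the proof of \Cref{thm:hadamard-fischer}: when comparing a Leibniz-style expansion of $P(A)$ to that of $P(\pi_\Pi(A))$, the permutations contributing genuinely ``cross-block'' terms must cross at least two block boundaries, which presumably enables the key sign or cancellation analysis.
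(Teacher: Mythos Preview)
Your proof is correct. The paper takes a slightly different route: rather than the global pigeonhole count on $|\sigma^{-1}(B_i)|$, it looks at the orbit $b, \sigma(b), \sigma^2(b), \dots, \sigma^t(b) = b$ of the putative unique bad element $b$. Since the orbit starts in $S_k$, leaves $S_k$ (because $\sigma(b) \notin S_k$), and must return to $b \in S_k$, there are indices $r < s$ where the orbit exits $S_k$ (at $\sigma^r(b)$) and re-enters it (at $\sigma^{s+1}(b)$); the elements $\sigma^r(b) \in S_k$ and $\sigma^s(b) \notin S_k$ are distinct and both violate $\sim$, a contradiction. Both arguments are equally elementary; the paper's orbit picture is perhaps more visual, while your counting argument avoids tracking indices along a cycle. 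Your remark about the lemma's role is essentially right: it feeds into \Cref{lem:derioff} via the Leibniz expansion of $\det(X_B)$, which in turn underpins \Cref{thm:hadamard-fischer}.
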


\begin{proof}
    For $b \in B$, consider the orbit $b, \sigma(b),\sigma^2(b), \dots,
    \sigma^{t-1}(b), \sigma^t(b) = b$. If $b \in S_k$ but the orbit is not
    fully contained in $S_k$, then there are $0 \le r < s < t$ such that
    $\sigma^r(b),\sigma^{s+1}(b) \in S_k$ but $\sigma^{r+1}(b),\sigma^{s}(b)
    \not\in S_k$.
\end{proof}

\begin{lemma}\label{lem:derioff}
    Let $P$ be an lpm polynomial. If $A \in \mathcal{D}_{\Pi}$, then $\nabla P(A) \in \mathcal{D}_{\Pi}$.
\end{lemma}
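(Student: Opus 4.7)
The plan is to show that $(\nabla P(A))_{ij}$ vanishes whenever $i \not\sim j$, using a symmetry argument based on signed congruences. The core observation is that for any diagonal sign matrix $D = \diag(d_1,\ldots,d_n)$ with $d_k \in \{\pm 1\}$, the lpm polynomial $P$ is invariant under $X \mapsto DXD$, since $\det((DXD)_J) = \det(D_J)^2 \det(X_J) = \det(X_J)$ for every $J \subseteq [n]$ and $P$ is a linear combination of such principal minors.

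Next I would restrict to sign patterns that are constant on each block of $\Pi$, i.e., $d_i = d_j$ whenever $i \sim j$. For such a $D$, every $A \in \mathcal{D}_\Pi$ is fixed by the congruence: the off-block entries of $A$ already vanish, and the same-block entries satisfy $d_i d_j = 1$. Differentiating the identity $P(DXD) = P(X)$ at $X = A$ along a symmetric direction $H$, the chain rule together with cyclicity of trace gives
\[
    \langle \nabla P(A), H\rangle \ = \ \langle \nabla P(DAD), DHD\rangle \ = \ \langle D\,\nabla P(DAD)\,D, H\rangle.
\]
Since this holds for all $H \in \Sym$ and $DAD = A$, I conclude $\nabla P(A) = D\,\nabla P(A)\,D$, which entrywise reads $d_i d_j\,(\nabla P(A))_{ij} = (\nabla P(A))_{ij}$.

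To finish, for any $i \not\sim j$ the blocks containing $i$ and $j$ are distinct, so I can pick a block-constant sign pattern with $d_i d_j = -1$ (flip the signs on the block of $i$ and leave the other blocks at $+1$). This forces $(\nabla P(A))_{ij} = 0$, giving $\nabla P(A) \in \mathcal{D}_\Pi$. I do not anticipate a serious obstacle here — the only care is in translating the polynomial identity $P \circ \Psi = P$ into a gradient identity on $\Sym$ via cyclicity of trace. A combinatorial alternative that uses \Cref{lem:offblock} directly is: expand each $\det(X_J)$ by Leibniz, apply $\partial/\partial X_{ij}$, and evaluate at $A$; a surviving term would require a permutation $\sigma \in \fS_J$ with $\sigma(i) = j$ (or $\sigma(j) = i$) and $k \sim \sigma(k)$ for all remaining $k$, placing $\sigma$ in the case of exactly one off-block pair forbidden by \Cref{lem:offblock}, while the residual subcase $\sigma(i) = j,\,\sigma(j) = i$ leaves an uncancelled factor $X_{ij}$ which evaluates to $A_{ij} = 0$.
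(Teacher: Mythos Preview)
Your main argument via block-constant sign congruences is correct and genuinely different from the paper's proof. The paper reduces by linearity to $P=\det(X_B)$, expands with the Leibniz formula, and invokes \Cref{lem:offblock}: each permutation $\sigma$ has either no off-block pair $(k,\sigma(k))$, in which case the monomial contains no factor $X_{ij}$ with $i\not\sim j$ and the partial derivative vanishes; or at least two off-block pairs, in which case after removing one factor via $\partial/\partial X_{ij}$ an off-block factor remains and evaluates to zero at $A$. Your approach instead exploits the invariance $P(DXD)=P(X)$ for diagonal sign matrices and differentiates it, obtaining $\nabla P(A)=D\,\nabla P(A)\,D$ whenever $D$ is block-constant; choosing $D$ with opposite signs on the blocks of $i$ and $j$ kills the $(i,j)$ entry. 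This is cleaner and more conceptual --- it bypasses \Cref{lem:offblock} entirely and would apply verbatim to any polynomial on $\Sym$ invariant under such congruences, not just lpm polynomials --- whereas the paper's argument is more hands-on and makes the role of the preceding combinatorial lemma explicit. Your ``combinatorial alternative'' at the end is essentially the paper's argument, spelled out with the extra case $\sigma(i)=j,\ \sigma(j)=i$ handled separately; the paper absorbs that case into ``at least two off-block pairs,'' noting that differentiation removes only one factor.
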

\begin{proof}
    Since $P$ is a sum of terms of the form $a_B\det(X_B)$ with $B \subseteq
    [n]$, it suffices to prove the claim for $P=\det(X_B)$.
    In that case, this is equivalent to saying that if $A \in
    \mathcal{D}_{\Pi}$ and $i\not\sim j$, then
    \[
        \Bigl(\frac{\partial}{\partial X_{ij}}\det(X_B)\Bigr)(A) = 0.
    \]
    Now $\det X_B=\sum_{\sigma\in \fS_B} \mathrm{sgn}(\sigma) \prod_{i\in B}
    X_{i,\sigma(i)}$ and Lemma~\ref{lem:offblock} applied to each term yields
    the claim.
\end{proof}

The preceding lemma allows us to show that the hyperbolicity cone of a
hyperbolic lpm polynomial is closed under projections onto
$\mathcal{D}_{\Pi}$. 

\begin{lemma}\label{lem:lpm_projection}
    Let $P$ be a homogeous PSD-stable lpm polynomial. If $A \in H(P)$, then
    $\pi_{\Pi}(A) \in H(P)$.
\end{lemma}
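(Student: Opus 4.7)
The plan is to realize $\pi_\Pi(A)$ as a convex combination of matrices that all lie in $H(P)$ and then invoke convexity of the hyperbolicity cone. The relevant matrices are signed conjugates of $A$ by diagonal $\pm 1$ matrices that are constant on each block of $\Pi$. For each sign vector $\epsilon = (\epsilon_1,\dots,\epsilon_m) \in \{\pm 1\}^m$, let $D_\epsilon \in \Sym$ be the diagonal matrix with $(D_\epsilon)_{ii} = \epsilon_k$ whenever $i \in S_k$, and write $k(i)$ for the block index of $i$. Then the $(i,j)$-entry of $D_\epsilon A D_\epsilon$ is $\epsilon_{k(i)}\epsilon_{k(j)} A_{ij}$, which equals $A_{ij}$ if $i\sim j$ and averages to zero over $\epsilon$ otherwise, so
\[
    \pi_\Pi(A) \ = \ 2^{-m}\sum_{\epsilon \in \{\pm 1\}^m} D_\epsilon A D_\epsilon.
\]

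The key step is then to show that each conjugation $A \mapsto D_\epsilon A D_\epsilon$ preserves $H(P)$. Since $P$ is an lpm polynomial, it suffices to observe that every principal minor is preserved: for $J \subseteq [n]$, $(D_\epsilon A D_\epsilon)_J = (D_\epsilon)_J\, A_J\, (D_\epsilon)_J$ and $\det((D_\epsilon)_J)^2 = 1$, so $\det((D_\epsilon A D_\epsilon)_J) = \det(A_J)$. By linearity, $P(D_\epsilon B D_\epsilon) = P(B)$ for all $B \in \Sym$, and since $D_\epsilon I D_\epsilon = I$ we get $P(D_\epsilon A D_\epsilon - tI) = P(A - tI)$ for every $t \in \R$. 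In particular, $D_\epsilon A D_\epsilon \in H(P)$ if and only if $A \in H(P)$. Convexity of $H(P)$ then yields $\pi_\Pi(A) \in H(P)$.

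I do not foresee a substantial obstacle with this approach. The alternative route, which seems to be what \Cref{lem:offblock} and \Cref{lem:derioff} are actually preparing for, is to argue by contradiction: if $\pi_\Pi(A) \notin H(P)$, pass to the reduced factor of $P$, restrict to $\mathcal{D}_\Pi$ (which contains $I$), and apply \Cref{lem:dual}(3) to produce a boundary point $v \in \partial H(P) \cap \mathcal{D}_\Pi$ with $\langle \nabla P(v), \pi_\Pi(A)\rangle < 0$; \Cref{lem:derioff} then forces $\nabla P(v) \in \mathcal{D}_\Pi$, so by orthogonality $\langle \nabla P(v), A\rangle = \langle \nabla P(v), \pi_\Pi(A)\rangle < 0$, contradicting \Cref{lem:dual}(1). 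The only subtlety here is justifying the reduction to the reduced case, which the averaging argument sidesteps entirely.
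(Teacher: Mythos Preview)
Your averaging argument is correct and is genuinely different from the paper's proof. The paper argues via the gradient characterization of the hyperbolicity cone: it restricts $P$ to $\mathcal{D}_\Pi$, and uses \Cref{lem:dual} to reduce the claim $\pi_\Pi(A)\in H(P_\Pi)$ to the inequality $\langle \nabla P_\Pi(B),\pi_\Pi(A)\rangle\ge 0$ for all $B\in H(P_\Pi)$; \Cref{lem:derioff} then gives $\nabla P(B)\in\mathcal{D}_\Pi$, so this inner product equals $\langle\nabla P(B),A\rangle$, which is nonnegative by \Cref{lem:dual}(1). Your second paragraph is an accurate sketch of this route, and you are right that the ``reduced'' hypothesis in \Cref{lem:dual} is a small wrinkle the paper leaves implicit.

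Your route is more elementary: it uses only the invariance of principal minors under conjugation by diagonal $\pm 1$ matrices and the convexity of hyperbolicity cones, and it avoids \Cref{lem:dual} and \Cref{lem:derioff} entirely for this lemma. On the other hand, the paper's detour through \Cref{lem:derioff} is not wasted: that lemma is reused in the proof of the hyperbolic Fischer--Hadamard inequality (\Cref{thm:hadamard-fischer}) to identify the coefficient of $t$ in $P(tA+\pi_\Pi(A))$, so the gradient machinery is needed anyway. Your argument gives a cleaner, self-contained proof of the present lemma, while the paper's approach amortizes the cost of \Cref{lem:derioff} across both results.
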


\begin{proof}
    Let $P_{\Pi}$ be the restriction of the polynomial $P$ to
    $\mathcal{D}_{\Pi}$, i.e. $P_{\Pi} = P\circ\iota$ where $\iota :
    \mathcal{D}_{\Pi} \rightarrow \Sym$ is the inclusion map. We have $H(P)
    \cap \mathcal{D}_{\Pi} = H(P_{\Pi})$. For $A\in H(P)$ we thus have to show
    that $\pi_{\Pi}(A) \in H(P_\Pi)$. By \Cref{lem:dual} this is equivalent to
    $\langle\nabla P_\Pi(B), \pi_\Pi(A)\rangle\ge0$ for all $B\in H(P_\Pi)$.
    But by the previous lemma we have $\langle\nabla P_\Pi(B),
    \pi_\Pi(A)\rangle=\langle\nabla P(B), A\rangle$ which is nonnegative by
    \Cref{lem:dual} since $A \in H(P)$.
\end{proof}

We are now able to show the hyperbolic Fischer--Hadamard inequality. Our proof technique is inspired by the proof of \cite[Thm.~5]{Garding59ineq}. 

\begin{proof}[Proof of \Cref{thm:hadamard-fischer}]
Without loss of generality, we can assume that $P(I)>0$. If $A$ is on the boundary of $H(P)$, then $P(A)=0$ and we are done since $\pi_{\Pi}(A)\in H(P)$ implies $P(\pi_{\Pi}(A)) \geq 0$.
Therefore, we may assume that $A$ is in the interior of $H(P)$.
In this case, let $\epsilon>0$ be sufficiently small such that $A-\epsilon I\in H(P)$, then $\pi_\Pi(A)-\epsilon I=\pi_\Pi(A-\epsilon I)$ is also in $H(P)$.
This shows that $\pi_{\Pi}(A)$ is in the interior of $H(P)$ and $P(\pi_{\Pi}(A))>0$.
Thus $P$ is hyperbolic with respect to $A$ and $q(t) = P(tA+\pi_{\Pi}(A))\in\R[t]$ is real rooted with negative roots.
Let $d$ be the degree of $q(t)$ degree and $-\lambda_1,\ldots,-\lambda_d$ the roots of $q(t)$ with each $\lambda_i>0$.
We consider the coefficients of $t$ in $q(t)$: 

\begin{itemize}
    \item[--] The coefficient of $t^d$ is $P(A)$. 
    \item[--] The coefficient of $t$ is $dP(\pi_{\Pi}(A))$, since $\frac{d}{dt}q(t)|_{t=0} = \langle \nabla P|_{\pi_{\Pi}(A)}, A\rangle$, and by \Cref{lem:derioff}, $\langle \nabla P|_{\pi_{\Pi}(A)}, A\rangle = \langle \nabla P|_{\pi_{\Pi}(A)}, \pi_{\Pi}(A)\rangle = dP(\pi_{\Pi}(A))$. This last equality is due to Euler's identity.
    \item[--] The constant coefficient is $P(\pi_{\Pi}(A))$. 
\end{itemize}

Thus we have $e_{d-1}(\lambda)=\frac{dP(\pi_{\Pi}(A))}{P(A)}$, and $e_d(\lambda)=\lambda_1\cdots\lambda_d=\frac{P(\pi_{\Pi}(A))}{P(A)}$. Since all $\lambda_i$ are positive, from the AM-GM inequality we have 
$$\frac{P(\pi_{\Pi}(A))}{P(A)}=\frac{e_{d-1}(\lambda)}{d}\geq (\lambda_1\cdots\lambda_d)^{\frac{d-1}{d}}=\left(\frac{P(\pi_{\Pi}(A))}{P(A)}\right)^{\frac{d-1}{d}}.$$This proves the claim.
\end{proof}

When $P(X)=\det X$, then $H(P)$ is the cone of positive semidefinite matrices and our theorem implies the well-known Fischer's inequality: 

\begin{cor}[Fischer's inequality]
If $A$ is positive semidefinite, then $\det \pi_{\Pi}(A)\geq \det A$. 
\end{cor}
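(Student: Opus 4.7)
The plan is to obtain Fischer's classical inequality simply by specializing the hyperbolic Fischer--Hadamard inequality (\Cref{thm:hadamard-fischer}) to the determinant, after identifying $\det$ as the minor lift of a stable multiaffine polynomial and verifying that its hyperbolicity cone is the PSD cone.

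First I would observe that $\det(X) = \det(X_{[n]})$ is the minor lift of the multiaffine polynomial $e_n(\x) = x_1 \cdots x_n$. Since each $x_i$ is stable and products of stable polynomials are stable, $e_n$ is a homogeneous multiaffine stable polynomial of degree $n$. Applying \Cref{thm:minor_lift_stable} then gives that $\det = \Phi(e_n)$ is PSD-stable and that $\PSD_n = \PSD \subseteq H(\det)$. Conversely, $\det$ is hyperbolic with respect to $I$ with real roots equal to the eigenvalues of $X$, so $H(\det)$ consists of matrices with nonnegative eigenvalues, i.e., exactly the PSD cone. Thus $H(\det) = \PSD$.

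Next, given a positive semidefinite matrix $A$, I have $A \in H(\det)$, so \Cref{thm:hadamard-fischer} applied to $P = \det$ and the given partition $\Pi$ yields
\[
    \det(\pi_\Pi(A)) \ \geq \ \det(A).
\]
Since $\pi_\Pi(A)$ is block diagonal with blocks $A_{S_1}, \dots, A_{S_m}$, its determinant equals $\prod_{i=1}^m \det(A_{S_i})$, which recovers the usual statement of Fischer's inequality.

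There is no real obstacle here: the corollary is a direct specialization. The only point that deserves a brief comment is the identification $H(\det) = \PSD$, and even this is essentially a classical fact, readily checked from the definition of hyperbolicity cone together with the spectral theorem.
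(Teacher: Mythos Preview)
Your proposal is correct and follows exactly the paper's approach: specialize \Cref{thm:hadamard-fischer} to $P=\det$, using that $H(\det)$ is the PSD cone. The paper states this in a single sentence, while you have supplied more detail (that $\det=\Phi(e_n)$ with $e_n$ stable, and the two inclusions giving $H(\det)=\PSD$), but the argument is the same.
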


\begin{remark}
The usual statement of Fischer's inequality corresponds to the case of two blocks. This is equivalent to our multi-block version since principal submatrices of a positive semidefinite matrix are also positive semidefinite. 
\end{remark}

In the case, where $\Pi=\{\{1\},...,\{n\}\}$, 
\Cref{thm:hadamard-fischer} and \Cref{lem:lpm_projection} imply
\Cref{cor:diag}.
We also get the following strengthening of \Cref{thm:hadamard-fischer}.

\begin{cor}
\label{thm:incfh}
Let $P$ be a homogeneous and PSD-stable lpm-polynomial. If $A \in H(P)$, then the polynomial $P((1-t)A + t\pi_{\Pi}(A))$ is monotonically increasing for $t \in [0,1]$.
\end{cor}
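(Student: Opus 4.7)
The plan is to show that the polynomial $f(t) := P((1-t)A + t\pi_{\Pi}(A))$ satisfies $f'(t) \ge 0$ throughout $[0,1]$. By continuity of $P$ we may assume $A$ lies in the interior of $H(P)$; the boundary case then follows by approximation. As observed in the proof of \Cref{thm:hadamard-fischer}, $\pi_{\Pi}(A)$ is then also in the interior of $H(P)$ (apply \Cref{lem:lpm_projection} to $A - \epsilon I$ for small $\epsilon > 0$), so by convexity $B_t := (1-t)A + t\pi_{\Pi}(A)$ lies in the interior of $H(P)$ for every $t \in [0,1]$, and in particular $P(B_t) > 0$.

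Since $\pi_{\Pi}$ is an idempotent linear map, $\pi_{\Pi}(B_t) = \pi_{\Pi}(A)$, so for $t \in [0,1)$,
\[
    \pi_{\Pi}(A) - A \ = \ \frac{1}{1-t}\bigl(\pi_{\Pi}(A) - B_t\bigr).
\]
Taking the Frobenius inner product with $\nabla P(B_t)$ and using Euler's identity $\langle \nabla P(B_t), B_t\rangle = d \cdot P(B_t)$ yields
\[
    (1-t)\, f'(t) \ = \ \langle \nabla P(B_t), \pi_{\Pi}(A)\rangle - d\, P(B_t),
\]
so the task reduces to proving $\langle \nabla P(B_t), \pi_{\Pi}(A)\rangle \ge d\, P(B_t)$ for $t \in [0,1)$. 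The endpoint $t = 1$ is handled separately: there $f'(1) = \langle \nabla P(\pi_{\Pi}(A)), \pi_{\Pi}(A) - A\rangle$, which vanishes by \Cref{lem:derioff} because $\pi_{\Pi}(A) - A$ lies in the orthogonal complement of $\mathcal{D}_{\Pi}$.

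To prove the displayed inequality I would mimic the proof of \Cref{thm:hadamard-fischer} with $B_t$ playing the role of $A$. Consider $q(s) := P(sB_t + \pi_{\Pi}(A))$; this univariate polynomial of degree $d$ has $d$ strictly negative real roots $-\mu_1, \dots, -\mu_d$. Reading off the constant, linear, and leading coefficients (using \Cref{lem:derioff} and Euler's identity, exactly as in the proof of \Cref{thm:hadamard-fischer}) yields
\[
    e_d(\mu) \ = \ \frac{P(\pi_{\Pi}(A))}{P(B_t)}, \qquad e_{d-1}(\mu) \ = \ \frac{d\, P(\pi_{\Pi}(A))}{P(B_t)},
\]
so $\sum_i 1/\mu_i = e_{d-1}(\mu)/e_d(\mu) = d$, i.e.\ the harmonic mean of $\mu_1, \dots, \mu_d$ equals $1$. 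On the other hand, writing $q(s) = s^d P(B_t + s^{-1}\pi_{\Pi}(A))$ and expanding in $1/s$ identifies the $s^{d-1}$ coefficient of $q$ as $\langle \nabla P(B_t), \pi_{\Pi}(A)\rangle$, while from the factorization it equals $P(B_t) \cdot e_1(\mu)$. The AM-HM inequality then gives $e_1(\mu) \ge d$, which is exactly the inequality we need.

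I do not anticipate a substantial obstacle. The only conceptual step is to notice that the proof of \Cref{thm:hadamard-fischer} already produces the relation $e_{d-1}(\mu)/e_d(\mu) = d$ on the roots of $q$, and that invoking AM-HM (rather than AM-GM, which only recovers \Cref{thm:hadamard-fischer} itself) on this same set of roots, together with the right reading of the $s^{d-1}$ coefficient, yields the stronger monotonicity statement.
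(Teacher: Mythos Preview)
Your proof is correct and takes a genuinely different route from the paper's. The paper argues indirectly: it observes that $f(t)=P((1-t)A+t\pi_{\Pi}(A))$ is a real-rooted univariate polynomial, hence $f'$ interlaces $f$ and has at most one zero in $[0,1]$; since $f\ge 0$ on $[0,1]$ and (by \Cref{thm:hadamard-fischer}) $f(1)\ge f(0)$, a sign analysis of $f'$ forces $f'\ge 0$ on $[0,1]$. Your approach instead computes $f'(t)$ directly. You reduce the inequality $(1-t)f'(t)\ge 0$ to $e_1(\mu)\ge d$ for the roots $-\mu_i$ of $q(s)=P(sB_t+\pi_{\Pi}(A))$, and then, using the identities $e_d(\mu)=P(\pi_{\Pi}(A))/P(B_t)$ and $e_{d-1}(\mu)=dP(\pi_{\Pi}(A))/P(B_t)$ already extracted in the proof of \Cref{thm:hadamard-fischer}, you get $\sum_i 1/\mu_i=d$ and finish with AM--HM.

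The trade-off: the paper's proof is shorter and uses \Cref{thm:hadamard-fischer} as a black box, but relies on a slightly delicate real-rootedness/interlacing argument. Your proof is more self-contained---it does not even invoke \Cref{thm:hadamard-fischer} (AM--HM suffices, and in fact re-derives AM--GM and hence \Cref{thm:hadamard-fischer} along the way)---and yields the pointwise quantitative bound $(1-t)f'(t)=P(B_t)\bigl(e_1(\mu)-d\bigr)\ge 0$, with equality exactly when all $\mu_i=1$.
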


\begin{proof}
    The polynomial $q(t) = P(tX + (\pi_{\Pi}(X) - X))$  is real rooted, and 
    \[
        P((1-t) X + t\pi_{\Pi}(X)) = q^*(t)
    \]
    so that $q^*(t)$ is real rooted.     
    Because both $A$ and $\pi_{\Pi}(A)$ are in $H(P)$, we have $q^*(t) \ge 0$ for $t \in [0,1]$. Hence, by interlacing $\frac{d}{dt} q^*(t)$ has at most one root in the interval $[0,1]$. If there were a root of $\frac{d}{dt} q^*(t)$ in the interval $[0, 1)$, then at such a root $q^*(t)$ would have a maximum on this interval. This is a contradiction to the fact that $q^*(t)$ is maximized at $t = 1$ by \Cref{thm:hadamard-fischer}.    
    Hence, we have that $\frac{d}{dt}q^*(t)$ must in fact be nonnegative on this interval.
\end{proof}

\section{Hyperbolic Koteljanskii inequality}\label{sec:koteljanskii}
Koteljanskii's inequality \cite{Koteljanskii} states that for any $n\times n$
positive semidefinite matrix $A$ and $S,T\subset [n]$, $\det A_S \det A_T\ge
\det A_{S\cap T} \det A_{S\cup T}$. This is a generalization of the
Hadamard--Fischer inequality. Later this inequality was proven to hold for
other classes of (possibly non-symmetric) matrices \cite{JN14koteljanskii}.  
In this section we prove \Cref{thm:kota}, a generalization of Koteljanskii's
inequality, where the determinant can be replaced by a PSD-stable
lpm polynomial. First we need the hyperbolic counterpart of the fact that
principal submatrices of a positive semidefinite matrix are again positive
semidefinite, and hence have nonnegative determinant. For this we use 
Renegar derivatives~\cite{Renegar06derivative}. 

\begin{theorem}\label{thm:Renegar}
	Let $p$ be a polynomial, hyperbolic with respect to $v$. Let $D_v p$ denote the directional derivative of $p$ in direction $v$. Then $D_v p$ is also hyperbolic with respect to $v$. Furthermore, their hyperbolicity cones satisfy $H_v(p)\subseteq H_v(D_v p)$. 
\end{theorem}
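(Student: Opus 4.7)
The plan is to reduce the multivariate statement to a one-variable application of Rolle's theorem, using the univariate restriction $q_x(t) := p(x-tv)$ that already underlies the definition of hyperbolicity.

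First I would verify that $D_v p$ does not vanish at $v$, which is needed before we can even talk about hyperbolicity with respect to $v$. Since $p$ is homogeneous of some degree $d$, Euler's identity gives $(D_v p)(v) = \langle \nabla p(v), v\rangle = d\cdot p(v)$, and $p(v)\neq 0$ by hyperbolicity. Next I would record the key chain-rule identity
\[
    \frac{d}{dt}\, p(x - tv) \ = \ -\,(D_v p)(x - tv)
\]
for every $x\in\R^n$. Both of the remaining claims will come from applying Rolle's theorem to $q_x(t) = p(x-tv)$ and reading off consequences for $(D_v p)(x-tv) = -q_x'(t)$.

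For the hyperbolicity of $D_v p$ with respect to $v$: fix an arbitrary $x\in\R^n$. By hypothesis $q_x(t)$ is a univariate real-rooted polynomial of degree $d$. Rolle's theorem guarantees that its derivative $q_x'(t)$ has only real roots (of degree $d-1$, possibly with multiplicity). Consequently $(D_v p)(x - tv)$ has only real roots in $t$, and since $x$ was arbitrary this gives hyperbolicity of $D_v p$ with respect to $v$.

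For the cone containment $H_v(p) \subseteq H_v(D_v p)$: let $x\in H_v(p)$, so by definition $q_x(t) = p(x - tv)$ has all of its roots in $[0,\infty)$. Rolle's theorem now says that between any two consecutive roots of $q_x$ there is a root of $q_x'$, and more generally the roots of $q_x'$ interlace those of $q_x$. In particular every root of $q_x'$ lies in the interval between the smallest and largest roots of $q_x$, hence all roots of $q_x'$ are nonnegative as well. Since $(D_v p)(x - tv) = -q_x'(t)$ has the same roots as $q_x'(t)$, we conclude that $x\in H_v(D_v p)$. There is no real obstacle here; the content is entirely in matching the univariate directional slice to the multivariate derivative and then quoting Rolle.
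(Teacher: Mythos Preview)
The paper does not give its own proof of this theorem; it is quoted as a known result (attributed to Renegar) and then applied. Your argument is correct and is exactly the standard proof: pass to the univariate restriction $q_x(t)=p(x-tv)$, use Euler's identity to see $(D_vp)(v)=d\,p(v)\neq 0$, and then use that the derivative of a real-rooted univariate polynomial is again real-rooted with roots interlacing the original (Rolle/Gauss--Lucas), which yields both hyperbolicity of $D_vp$ and the inclusion $H_v(p)\subseteq H_v(D_vp)$. The only thing you might make explicit is the multiple-root case in the Rolle step (a root of $q_x$ of multiplicity $k$ contributes a root of $q_x'$ of multiplicity $k-1$), so that the count of real roots of $q_x'$ reaches the full degree $d-1$; this is routine but worth a sentence.
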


Recall from \Cref{def:restriction} that $P|_T=(\prod_{i\in[n]\setminus T}\frac{\partial }{\partial X_{ii}})P$. Then we have:

\begin{cor}\label{cor:subdet}
	Let $P$ be a homogeneous PSD-stable lpm polynomial of degree $k$ and $A\in H(P)$. Let $T\subseteq [n]$ with $|T|\ge n-k$. Then $P|_T$ is PSD-stable as well and $A\in H(P|_T)$. 
\end{cor}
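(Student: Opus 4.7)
The plan is to realize $P|_T$ as the composition of directional derivatives along PSD directions and then invoke Renegar's theorem (\Cref{thm:Renegar}) iteratively, after extending it to directions in the closure of the hyperbolicity cone. Concretely, writing $E_{ii} \in \Sym$ for the elementary matrix with a single $1$ in position $(i,i)$, the operator $\partial/\partial X_{ii}$ agrees with the directional derivative $D_{E_{ii}}$, and since these partial derivatives commute, $P|_T$ is obtained from $P$ by applying $D_{E_{ii}}$ successively for $i \in [n]\setminus T$. Each $E_{ii}$ is positive semidefinite and hence lies in the closure of $H(P)$, because any PSD-stable polynomial has the PSD cone contained in its hyperbolicity cone by G\r{a}rding convexity.

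The main work is to extend \Cref{thm:Renegar} to boundary directions: if $p$ is hyperbolic with respect to $e$ and $v \in \overline{H_e(p)}$, then $D_v p$ is either identically zero or hyperbolic with respect to $e$, with $H_e(p) \subseteq H_e(D_v p)$. To establish this, I would perturb $v$ to $v + \epsilon e$ for $\epsilon > 0$, which lies in the interior of $H_e(p)$, so \Cref{thm:Renegar} applies directly and shows that $D_{v+\epsilon e} p$ is hyperbolic with respect to $v + \epsilon e$ with hyperbolicity cone containing $H_e(p)$. Since $e$ itself lies in $H_e(p)$ and hence in this enlarged cone, $D_{v+\epsilon e} p$ is also hyperbolic with respect to $e$. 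Letting $\epsilon \to 0$ and invoking Hurwitz's theorem on convergence of roots of the univariate restrictions $t \mapsto (D_{v+\epsilon e} p)(x - te)$, the limit $D_v p$ is either identically zero or real-rooted along every such line, hence hyperbolic with respect to $e$ with $H_e(p) \subseteq H_e(D_v p)$.

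Applying this extension iteratively with $e$ equal to the identity matrix (or any positive definite matrix) and $v$ ranging over $E_{ii}$ for $i \in [n]\setminus T$ shows that $P|_T$ is either identically zero or PSD-stable with $H(P) \subseteq H(P|_T)$. The degree hypothesis $|T| \ge n - k$ ensures the iteration applies at most $k = \deg P$ times, so the resulting polynomial has nonnegative degree. In either case, $A \in H(P)$ yields $A \in H(P|_T)$, the vanishing case being vacuous. The main obstacle is precisely this extension of \Cref{thm:Renegar} to boundary directions: since $P(E_{ii}) = 0$ whenever $k \ge 2$, the direction $E_{ii}$ genuinely lies on the boundary of $H(P)$ and a direct application of \Cref{thm:Renegar} is unavailable, making the Hurwitz limit argument the essential technical piece.
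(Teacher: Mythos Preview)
Your approach is correct and matches the paper's intent: the paper states this result as an immediate corollary of Renegar's theorem (\Cref{thm:Renegar}) without further proof, implicitly using that $\partial/\partial X_{ii} = D_{E_{ii}}$ with $E_{ii}$ positive semidefinite. You are right to flag that $E_{ii}$ lies only on the boundary of $H(P)$ when $k \ge 2$, so the statement of \Cref{thm:Renegar} does not literally apply; your perturbation-and-Hurwitz argument is the standard (and correct) way to extend Renegar's theorem to directions in the closed hyperbolicity cone, and fills in a detail the paper leaves tacit. Your observation that $P|_T$ could in principle be identically zero (if some $x_i$ with $i \notin T$ does not occur in $p$) is also a fair point that the paper's statement glosses over.
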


Now we use the result from \cite{BBL09negativedependence} on negative dependence. For any polynomial $p\in\R[x]$ and index set $S\subseteq [n]$ we denote $\partial^S p=(\prod_{i\in S}\frac{\partial }{\partial x_{i}})p$. 

\begin{theorem}[{\cite[Sect.~2.1 and
    Thm.~4.9]{BBL09negativedependence}}]\label{thm:NLC}
    Let $p$ be a multiaffine stable polynomial with nonnegative coefficients.
    Then $p$ satisfies the nonnegative lattice condition: for all
    $S,T\subseteq [n]$
    \[
        \partial^S p(0) \partial^T p(0) \ \ge \ \partial^{S\cup T}
    p(0)\partial^{S\cap T}p(0) \, . 
\]
\end{theorem}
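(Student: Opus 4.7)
The plan is to interpret the inequality in terms of coefficients, reduce to the case of disjoint subsets, and then prove a pointwise strengthening by induction, using the strong Rayleigh characterization of multiaffine stable polynomials.

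Since $p=\sum_{J\subseteq[n]}a_J x^J$ is multiaffine, we have $\partial^S p(0)=a_S$, so the claimed inequality reads $a_S a_T \ge a_{S\cap T}\,a_{S\cup T}$. I would first reduce to the case $S\cap T=\emptyset$ by passing from $p$ to $\tilde p := \partial^{S\cap T}p$, which remains multiaffine stable with nonnegative coefficients since iterated differentiation is a stability preserver \cite{weyl}. Writing $I=S\setminus T$ and $J=T\setminus S$, the claim reduces to $\tilde a_I \tilde a_J \ge \tilde a_\emptyset\, \tilde a_{I\cup J}$ for disjoint $I,J$, where $\tilde a_L$ denotes the coefficient of $x^L$ in $\tilde p$.

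I would then establish the stronger pointwise inequality
\[
    (\partial^I q)(x)\cdot(\partial^J q)(x) \ \ge \ q(x)\cdot(\partial^{I\cup J} q)(x) \qquad \text{for all } x \in \R^n_{\ge 0}
\]
for any multiaffine stable $q$ with nonnegative coefficients and any disjoint $I,J\subseteq[n]$, by induction on $|I|+|J|$. The base case $|I|=|J|=1$ is Br\"and\'en's \emph{strong Rayleigh property}: such $q$ satisfies $\partial_i q\cdot\partial_j q - q\cdot\partial_i\partial_j q \ge 0$ on all of $\R^n$ for every $i\neq j$. For the inductive step, assume without loss of generality $|J|\ge 2$, fix $j_0\in J$, and set $J'=J\setminus\{j_0\}$. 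I apply the inductive hypothesis in two ways: to the pair $(I,J')$ for $q$ itself, yielding $(\partial^I q)(\partial^{J'} q) \ge q\,(\partial^{I\cup J'} q)$; and to the pair $(I,\{j_0\})$ for the polynomial $\partial^{J'}q$ (itself multiaffine stable with nonnegative coefficients), yielding $(\partial^{I\cup J'} q)(\partial^J q) \ge (\partial^{J'} q)(\partial^{I\cup J} q)$. The total index sizes are $|I|+|J|-1$ and $|I|+1$, both strictly smaller than $|I|+|J|$ since $|J|\ge 2$. Multiplying the two inequalities and canceling the common factor $(\partial^{J'}q)(\partial^{I\cup J'}q)$ yields the desired bound; taking $q=\tilde p$ and evaluating at $x=0$ then gives the NLC.

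The main obstacle is the legitimacy of the cancellation, since the common factor $(\partial^{J'}q)(\partial^{I\cup J'}q)$ can vanish on $\R^n_{\ge 0}$. To handle this I would observe that each $\partial^K q$ is a polynomial with nonnegative coefficients and is therefore either identically zero or strictly positive on the open positive orthant $\R^n_{>0}$, which is dense in $\R^n_{\ge 0}$. In the degenerate case where one of the two factors is identically zero, every coefficient $a_L$ of $q$ with $L$ containing $J'$ (respectively $I\cup J'$) vanishes; in particular $a_{I\cup J}=0$, and the NLC collapses to the trivial statement $a_I a_J \ge 0$. Otherwise both factors are strictly positive on $\R^n_{>0}$, so the cancellation is legitimate there, and the pointwise inequality extends by continuity to all of $\R^n_{\ge 0}$, including the origin. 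The essential analytic input is Br\"and\'en's strong Rayleigh characterization, itself a nontrivial theorem in the theory of stable polynomials; the remainder of the proof is induction and continuity.
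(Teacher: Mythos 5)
Your proof is correct. Note that the paper does not actually prove this statement -- it is quoted verbatim from Borcea--Br\"and\'en--Liggett -- so any self-contained argument is by definition a different route; yours is a good one. The reduction $\partial^S p(0)=a_S$, the passage to $\partial^{S\cap T}p$ to make the index sets disjoint, and the two-step induction (apply the hypothesis to $(I,J')$ for $q$ and to $(I,\{j_0\})$ for $\partial^{J'}q$, multiply, cancel the common factor) is a clean way to bootstrap the negative lattice condition from the pairwise case, and the essential black box -- Br\"and\'en's theorem that a multiaffine real polynomial is stable iff $\partial_i q\,\partial_j q - q\,\partial_i\partial_j q\ge 0$ on all of $\R^n$ -- is exactly the right one; it is also what powers the original argument, which instead phrases the reduction via closure of the strong Rayleigh class under conditioning. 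Two places to tighten the write-up, neither of which is a gap. First, the statement being propagated by the induction is the \emph{pointwise} inequality on $\R^n_{\ge 0}$, so in the degenerate case you must verify that inequality and not merely the specialization at the origin; this is immediate, since $\partial^{J'}q\equiv 0$ forces $\partial^{J}q\equiv 0$ and $\partial^{I\cup J}q\equiv 0$ (both sides vanish), while $\partial^{I\cup J'}q\equiv 0$ forces $\partial^{I\cup J}q\equiv 0$ (the right side vanishes and the left side is a product of polynomials with nonnegative coefficients, hence nonnegative on the closed orthant). Second, when you multiply the two inequalities (A) and (B) you should say explicitly that all four products involved are nonnegative on $\R^n_{\ge 0}$, which is what makes termwise multiplication of inequalities legitimate before you cancel on the dense open orthant and conclude by continuity.
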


This theorem directly implies the generalization of Koteljanskii's inequality. 

\begin{proof}[Proof of \Cref{thm:kota}]
Without loss of generality assume that $P(I)>0$.
	Let $P_A(x)=P(A+\Diag(x))\in \R[x_1,...,x_n]$. It is clear that $P_A$ is multiaffine and $\partial^S P_A(0)=P|_S(A)$ for all $S\subseteq [n]$. It follows from Corollary \ref{cor:subdet} that $P_A$ is stable and has nonnegative coefficients. Thus by Theorem \ref{thm:NLC} it satisfies the nonnegative lattice condition, i.e. for all $S,T\subseteq [n],\partial^S P_A(0) \partial^T P_A(0)\ge \partial^{S\cup T} P_A(0)\partial^{S\cap T}P_A(0)$. This completes the proof.
\end{proof}

\section{Hyperbolic polynomials and sums of squares}\label{sec:soshyperbolicity} 
Let $p\in\R[x]$ be hyperbolic with respect to $v \in \R^n$ and $a,b \in H_v(p)$.  Then the mixed derivative
\[
    \Delta_{a,b}(p)=\De_ap\cdot\De_bp-p\cdot\De_a\De_bp
\] 
is globally nonnegative by Theorem 3.1 in \cite{interlacers}.  If
some power $p^r$ has a definite symmetric determinantal representation,
i.e., can be written as 
\[
    p^r=\det(x_1A_1+\cdots + x_n A_n) 
\]
for some real symmetric (or complex hermitian) matrices $A_1,\dots,A_n$ with
$v_1A_1+\ldots+v_nA_n$ positive definite, then $\Delta_{a,b}(p)$ is even a sum
of squares \cite[Cor.~4.3]{interlacers}.  Therefore, any instance where
$\Delta_{a,b}(p)$ is not a sum of squares gives an example of a hyperbolic
polynomial none of whose powers has a definite symmetric determinantal
representation.  Another source of interest in such examples comes from the
point of view taken in \cite{soshyperbolic}, as these give rise to families of
polynomials that are not sums of squares but whose nonnegativity can be
certified via hyperbolic programming.  Saunderson \cite{soshyperbolic}
characterized all pairs $(d,n)$ for which there exists such a hyperbolic
polynomial $p\in\R[x]=\R[x_1,\ldots,x_n]$ of degree $d$, except when $d=3$.
In this section we will construct an explicit hyperbolic cubic $p$ in $6$
variables for which there are two points $a,b$ in the hyperbolicity cone such
that $\Delta_{a,b}(p)$ is not a sum of squares.

\begin{remark}
    If there are two points $a,b$ in the closed hyperbolicity cone of $p$ such that $\Delta_{a,b}(h)$ is not a sum of squares, then there are also such points in the interior of the hyperbolicity cone as the cone of sums of squares is closed.
\end{remark}

\begin{remark}
 In \cite{soshyperbolic} Saunderson constructs a hyperbolic cubic in $43$ variables whose \emph{B\'ezout matrix} is not a matrix sum of squares. This is the smallest such example that has been known so far. The top left entry of the B\'ezout matrix is the mixed derivative that we are studying. Thus if the latter is not a sum of squares, then the B\'ezout matrix is not a matrix sum of squares.
\end{remark}

Consider the complete graph $K_4$ on $4$ vertices.
We define the spanning tree polynomial of $K_4$ as the element of $\R[x_e : e
\in E(K_4)]$ given by 
\[
    t_{K_4}(x) \ = \ \sum_{\tau} \prod_{e \in \tau}x_e \, ,
\]
where $\tau \subset E(K_4)$ ranges over all edge sets of spanning trees of
$K_4$. The polynomial $t_{K_4}$ is multiaffine, homogeneous and stable
\cite[Thm.~1.1]{halfplane}.  Let $T$ be its minor lift.  Finally, let $p$ be
the polynomial obtained from $T$ by evaluating $T$ at the matrix of
indeterminants
\[
    A=\bordermatrix{
 &  12 &  13 & 14 & 23 & 24 & 34 \cr
 &x_{1}&0&0&0&0&0\cr
&0&x_{2}&a&b&c&0\cr
&0&a&x_{2}&c&b&0\cr
&0&b&c&x_{2}&a&0\cr
&0&c&b&a&x_{2}&0\cr
&0&0&0&0&0&x_{3}}.
\]
Thus $p$ is hyperbolic with respect to every positive definite matrix that can
be obtained by specializing entries of $A$ to some real numbers.
In particular, the polynomial $$W=\frac{\partial p}{\partial x_{1}}\cdot\frac{\partial p}{\partial x_{3}}-p\cdot\frac{\partial^2 p}{\partial x_{1}\partial x_{3}}$$is nonnegative.
We will show that it is not a sum of squares.
We first study the real zero set of $W$.

\begin{lemma}\label{lem:idealj}
 The polynomial $W$ is contained in the ideals $J_1,J_2,J_3$ and $J_4$ where
 \begin{enumerate}
     \item $J_1$ is generated by all $2\times2$ minors of $A$,
     \item $J_2$ is generated by all off-diagonal entries of $A$,
     \item $J_3$ is generated by $a, c$ and $x_2$,
     \item $J_4$ is generated by $b, c$ and $x_2$.
 \end{enumerate}
 \end{lemma}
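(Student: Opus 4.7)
My plan is as follows. Since $A_{11}=x_1$ and $A_{66}=x_3$ are the only entries of $A$ involving the variables $x_1$ and $x_3$, and since the minor lift $T$ is multiaffine in the entries of $X$, the polynomial $p = T(A)$ is linear in each of $x_1$ and $x_3$. Writing
\[
p \;=\; \alpha + \beta\, x_1 + \gamma\, x_3 + \delta\, x_1 x_3,\qquad \alpha,\beta,\gamma,\delta\in\R[x_2,a,b,c],
\]
a direct expansion of the defining formula for $W$ collapses to $W = \beta\gamma - \alpha\delta$, so in particular $W$ is a polynomial in $x_2,a,b,c$ alone.

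Next I would compute $\alpha,\beta,\gamma,\delta$ by expanding $T(A) = \sum_\tau \det A_\tau$ over the $16$ spanning trees $\tau$ of $K_4$ and grouping them according to whether each tree contains the distinguished edges $12$ (index $1$) and $34$ (index $6$). The four trees containing both of these edges yield $\delta = 4x_2$; the four trees containing neither consist of three middle edges and contribute equal $3\times 3$ principal minors of the middle block, giving $\alpha = 4[x_2^3 - x_2(a^2+b^2+c^2) + 2abc]$. The four trees containing $12$ but not $34$ correspond to middle-edge pairs whose associated $2\times 2$ principal minors of the middle block sum to $\beta = 4 x_2^2 - 2(a^2 + c^2)$, while the four trees containing $34$ but not $12$ analogously give $\gamma = 4 x_2^2 - 2(b^2+c^2)$. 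The asymmetry between $\beta$ and $\gamma$ (the absent $b^2$ in one and the absent $a^2$ in the other) reflects the distinct local geometry at the two edges and is essential for what follows.

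Plugging these formulas into $W = \beta\gamma - \alpha\delta$ and simplifying yields the closed form
\[
W \;=\; 8(ax_2 - bc)^2 + 8(bx_2 - ac)^2 + 4(a^2-c^2)(b^2-c^2).
\]
Each of $ax_2 - bc$, $bx_2 - ac$, $a^2-c^2$, and $b^2-c^2$ appears as a $2\times 2$ minor of the middle block of $A$; for instance, $a^2 - c^2$ is the determinant of the submatrix of rows $\{2,4\}$ and columns $\{3,5\}$, and $ax_2 - bc$ is the determinant of rows $\{2,5\}$ and columns $\{2,4\}$. Hence $W\in J_1$ is manifest. For $J_2 = (a,b,c)$ every factor above lies in $(a,b,c)$, so $W \in J_2$. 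For $J_3 = (a,c,x_2)$ one has $ax_2-bc \in (a,c) \subseteq J_3$, $bx_2-ac \in (x_2,c)\subseteq J_3$, and $a^2-c^2 \in (a,c)\subseteq J_3$ so the product $(a^2-c^2)(b^2-c^2)$ also lies in $J_3$. For $J_4 = (b,c,x_2)$ the same argument applies with the roles of $a$ and $b$ interchanged: now it is $b^2-c^2 \in (b,c) \subseteq J_4$ that absorbs the product term.

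The main obstacle is the spanning-tree bookkeeping, especially recognising that the tree-to-pair correspondences around the two edges $12$ and $34$ select different pairs of middle edges and therefore break the naive symmetry in $a,b,c$. Once the explicit formulas for $\alpha,\beta,\gamma,\delta$ are in hand, the remainder is routine; the $J_1$ membership in particular benefits from the sum-of-squares-plus-product structure we derive for $W$, since verifying $W \in J_1$ directly from the definition of the determinantal ideal would otherwise require more work than the substitution-based verifications for $J_2,J_3,J_4$.
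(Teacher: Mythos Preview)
Your proposal is correct. The explicit formula you obtain for $W$ agrees with the paper's (your expression $8(ax_2-bc)^2+8(bx_2-ac)^2+4(a^2-c^2)(b^2-c^2)$ expands to $4$ times the paper's $a^2b^2+a^2c^2+b^2c^2+c^4-8abcx_2+2a^2x_2^2+2b^2x_2^2$), and the ideal memberships for $J_2,J_3,J_4$ then follow exactly as in the paper by inspection of the explicit form. One small imprecision: the minor lift is not multiaffine in \emph{all} entries of $X$ (off-diagonal entries occur squared), but it is linear in each diagonal entry, which is all you use.

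The genuine difference is in the argument for $J_1$. The paper bypasses any explicit identification of minors and instead notes that both $p$ and $\partial p/\partial x_1$ already lie in $J_1$: each $3\times 3$ principal minor of $A$ is, by Laplace expansion, an $\R[A]$-linear combination of $2\times 2$ minors, and differentiating with respect to the diagonal entry $x_1$ sends a $3\times 3$ principal minor containing row/column $1$ to the complementary $2\times 2$ principal minor. Hence $W=\partial_{x_1}p\cdot\partial_{x_3}p-p\cdot\partial_{x_1}\partial_{x_3}p\in J_1$ automatically. Your route instead exhibits $W$ as a polynomial combination of four specific $2\times 2$ minors of the middle block. The paper's argument is conceptually cleaner and requires no computation of $W$, while your decomposition has the advantage of displaying $W$ in a structured (almost sum-of-squares) form that makes all four memberships visible simultaneously.
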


\begin{proof}
 Part (1) follows from the fact that both $h$ and $\frac{\partial h}{\partial x_{1}}$ are in $J_1$. The other claims are apparent since \[\frac{1}{4}W=a^2 b^2+a^2 c^2+b^2 c^2+c^4-8 a b c x_2+2 a^2 x_2^2+2 b^2 x_2^2.\qedhere\]
\end{proof}

\begin{defi}
 An ideal $I$ in a ring $A$ is called \Def{real radical} if
    $g_1^2+\cdots+g_r^2\in I$ implies $g_1,\ldots,g_r\in I$ for all
    $g_1,\ldots,g_r\in A$.
\end{defi}

\begin{lemma}\label{lem:realradj}
 The ideal $J=\bigcap_{k=1}^4J_k$ is real radical.
\end{lemma}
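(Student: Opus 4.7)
The plan is to reduce to showing each $J_k$ is real radical individually, and then invoke the standard fact that the intersection of real radical ideals is real radical. The latter follows from the identity $V_\R(I_1 \cap I_2) = V_\R(I_1) \cup V_\R(I_2)$ together with the characterization of real radical ideals as vanishing ideals of real varieties, which yields $\sqrt[\mathrm{R}]{I_1 \cap I_2} = \sqrt[\mathrm{R}]{I_1} \cap \sqrt[\mathrm{R}]{I_2}$.

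For $k = 2, 3, 4$ real radicality is immediate: the only nonzero off-diagonal entries of $A$ are $a, b, c$, so $J_2 = (a, b, c)$; and by definition $J_3 = (a, c, x_2)$ and $J_4 = (b, c, x_2)$. Each is generated by a subset of the coordinate functions of $\R[x_1, x_2, x_3, a, b, c]$, hence is a prime ideal of linear forms and therefore real radical.

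The substantive case is $J_1$. I would first describe $V_\R(J_1) \subset \R^6$ explicitly. Because $A$ is block diagonal, $A = \diag(x_1, B, x_3)$ with $B$ the central symmetric $4 \times 4$ block, the rank of $A$ is the sum of the ranks of the three blocks, so $A$ has rank at most $1$ iff at most one block is nonzero. This yields the $x_1$-axis (with $B = 0,\ x_3 = 0$), the $x_3$-axis (with $B = 0,\ x_1 = 0$), and, in the case $x_1 = x_3 = 0$ with $B = \epsilon v v^\top$ of rank $1$, eight additional lines of the form
\[
\{x_1 = x_3 = 0,\ a = \sigma_2 x_2,\ b = \sigma_3 x_2,\ c = \sigma_4 x_2\}, \quad (\sigma_2, \sigma_3, \sigma_4) \in \{\pm 1\}^3,
\]
parameterizing the sign patterns of $v$ up to a global sign (the diagonal constraint $B_{ii} = x_2$ forces $v_i^2 = \epsilon x_2$, and the symmetry of $B$ forces the three constraints $\sigma_1\sigma_2 = \sigma_3\sigma_4$, $\sigma_1\sigma_3 = \sigma_2\sigma_4$, $\sigma_1\sigma_4 = \sigma_2\sigma_3$, which are consistent). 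All ten components are linear subspaces through the origin, so the real radical of $J_1$ is the intersection of ten prime ideals generated by linear forms.

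The main obstacle is verifying the equality of $J_1$ with this intersection---equivalently, showing $J_1$ is already reduced with exactly these ten linear components and no embedded or nonreduced structure. I plan to address this by a direct primary decomposition computation, exploiting the large symmetry group of $A$ (the subgroup of $\fS_4$ acting on the four middle indices that simultaneously permutes $\{a, b, c\}$) to cut down the bookkeeping, or alternatively by decomposing $J_1 = (x_1, x_3)(x_2, a, b, c) + (x_1 x_3) + J_B$, where $J_B$ denotes the classical $2 \times 2$ minor ideal of the generic symmetric $4 \times 4$ matrix $B$, and leveraging the known reducedness of the generic symmetric $2 \times 2$ minor ideal together with a transversality argument for how it meets the sparse contributions coming from the outer blocks.
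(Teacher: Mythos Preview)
Your approach matches the paper's: reduce to showing each $J_k$ is real radical, handle $J_2,J_3,J_4$ trivially as ideals of linear forms, and for $J_1$ argue that it is radical with all components linear (hence with dense real points). The paper simply invokes Macaulay2 for the radicality and primary decomposition of $J_1$, whereas you propose to carry this out by hand using the block structure and the symmetry of $B$; that is a legitimate refinement but not a different strategy.

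One small correction to your geometric count: the three constraints $\sigma_1\sigma_2=\sigma_3\sigma_4$, $\sigma_1\sigma_3=\sigma_2\sigma_4$, $\sigma_1\sigma_4=\sigma_2\sigma_3$ coming from the off-diagonal pattern of $B$ are each equivalent to $\sigma_1\sigma_2\sigma_3\sigma_4=1$, so after fixing the global sign they cut the eight sign patterns down to four, not eight. Thus $V(J_1)$ has six linear components (the $x_1$-axis, the $x_3$-axis, and four lines in the $\{x_1=x_3=0\}$ plane), not ten. This does not affect your argument---you would discover it when actually computing the decomposition---but it is worth correcting before you try to match generators against the intersection of the corresponding linear prime ideals.
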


\begin{proof}
 It suffices to show that each $J_k$ is a radical ideal such that the real points lie Zariski dense in its zero set. This is clear for $J_2,J_3$ and $J_4$. Using Macaulay2 \cite{M2} we checked that $J_1$ is radical. Moreover, the primary decomposition of $J_1$ shows that the zero set of $J_1$ is a union of linear spaces. This implies that the real zeros of $J_1$ are dense as well.
\end{proof}

\begin{theorem}
 The polynomial $W$ is not a sum of squares.
\end{theorem}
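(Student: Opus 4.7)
The plan is to combine the ideal-theoretic information in \Cref{lem:idealj} and \Cref{lem:realradj} with a degree count to reduce the SOS question for $W$ to a finite-dimensional semidefinite feasibility problem, and then to rule that problem out computationally.

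First I observe that $p=T(A)$ is homogeneous of degree $3$, since $T$ is homogeneous of degree $3$ and the entries of $A$ are linear in the six indeterminates $x_1,x_2,x_3,a,b,c$. Hence $W$ is homogeneous of degree $4$, and in any putative decomposition $W=\sum_{i=1}^r g_i^2$ the summands $g_i$ may be taken homogeneous of degree $2$.

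Next, by \Cref{lem:idealj}, $W$ vanishes on $V_\R(J_k)$ for each $k$, and hence on $V_\R(J)=\bigcup_k V_\R(J_k)$ where $J=\bigcap_k J_k$. Therefore each $g_i^2$, and consequently each $g_i$, vanishes on $V_\R(J)$. By \Cref{lem:realradj} the ideal $J$ is real radical, so the real Nullstellensatz forces $g_i\in J$ for every $i$. The candidate summands thus live in the finite-dimensional vector space $V$ of homogeneous degree-$2$ elements of $J$.

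To finish, one must show that $W$ does not lie in the cone of sums of squares of elements of $V$. After computing a basis of $V$ (a routine linear-algebra task, e.g.\ via Macaulay2, using the generators of $J_1,\ldots,J_4$ intersected with the space of homogeneous quadratics), this becomes a semidefinite feasibility problem of modest size: parametrize the PSD Gram matrices $Q$ in this basis and verify that none reproduces $W$; equivalently, exhibit a linear functional $\ell$ on the space of homogeneous quartics with $\ell(g^2)\ge 0$ for every $g\in V$ but $\ell(W)<0$. The main obstacle is precisely this last step: while the real-Nullstellensatz reduction is clean and forces the $g_i$ into a small space, infeasibility of the restricted SDP is not transparent by hand. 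Guided by the reduction $\tfrac{1}{4}W \equiv a^2b^2+a^2c^2+b^2c^2+c^4-8abcx_2+2a^2x_2^2+2b^2x_2^2$ modulo $J_1$ from the proof of \Cref{lem:idealj}, together with the symmetry $a\leftrightarrow b$ of $A$, I expect to either construct an explicit rational dual separator or certify SDP infeasibility in exact arithmetic; rationality of the certificate is important so that the conclusion holds over $\R$ and not merely numerically.
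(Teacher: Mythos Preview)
Your reduction is exactly the paper's: from a putative decomposition $W=\sum_i g_i^2$, the real-radicality of $J=\bigcap_{k=1}^4 J_k$ (\Cref{lem:realradj}) together with $W\in J$ (\Cref{lem:idealj}) forces each $g_i\in J$. Where you diverge is in the final step. You propose to compute a basis of the degree-$2$ part of $J$ and then solve (or certify infeasibility of) a semidefinite feasibility problem for a Gram matrix. That would work, but it is more than is needed, and you yourself flag it as the outstanding obstacle.

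The paper short-circuits the SDP entirely. Once $g_i\in J$ for every $i$, one has $g_i^2\in J\cdot J$, hence $W=\sum_i g_i^2\in J\cdot J$. So it suffices to check the \emph{ideal membership} $W\notin J\cdot J$, which is a straight Gr\"obner-basis computation in Macaulay2 (compute $J$, form $J\cdot J$, and reduce $W$). No positivity, no dual functional, no exact SDP certificate is required. This is both conceptually cleaner and computationally trivial compared to the SDP route, and it is how the paper closes the argument.
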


\begin{proof}
    Assume for the sake of a contradiction that $W=g_1^2+\cdots+g_r^2$ for some
    polynomials $g_i$. Since $W\in J$ by Lemma \ref{lem:idealj}, Lemma
    \ref{lem:realradj} implies that each $g_i$ is in $J$. Thus $W$ is even in
    the ideal $J\cdot J$. Using Macaulay2 \cite{M2} one checks that this is
    not the case.
\end{proof}

\begin{remark}
 In the terminology of \cite{soshyperbolic} this shows in particular that $h$ is neither SOS-hyperbolic nor weakly SOS-hyperbolic.
\end{remark}

\section{The Spectral Containment Property}\label{sec:eigenvaluecontainment}
We would like to relate the hyperbolicity cone of a homogeneous stable polynomial with the hyperbolicity cone of its minor lift. Recall from \Cref{def:eigenvalue_containment} that a homogeneous multiaffine stable polynomial $p$ has the \emph{spectral containment property} if for any $X \in H(P)$, there is some vector $\lambda$ consisting of the eigenvalues of $X$ with appropriate multiplicity so that $\lambda \in H(p)$.
Elementary symmetric polynomials have the spectral containment property, and we will show that several other polynomials have the spectral containment property in this section. The remainder of this section is devoted to proving some sufficient conditions for the spectral containment property, as well as showing some connections between this property and the Schur-Horn theorem.
\subsection{Schur--Horn Theorem and stable linear functions}
Recall that a linear homogeneous polynomial $p(x) = a_1x_1 + \dots + a_nx_n$ is stable if and only if either $a_i \ge 0$ for each $i \in [n]$, or $a_i \le 0$ for each $i \in [n]$. Moreover, in this case $H(p) = \{x \in \R^n : p(x) \ge 0\}$. These are the simplest stable polynomials and yet it is not completely trivial to show that they have the spectral containment property.

\begin{theorem}\label{thm:lineareigprop}
    Every stable linear homogeneous polynomial has the spectral containment property. 
\end{theorem}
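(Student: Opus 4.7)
The plan is to use one direction of the classical Schur--Horn theorem (Schur's majorization inequality) together with a convex averaging argument. Let me lay out the steps.

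First I would reduce to a concrete description of the two hyperbolicity cones. After replacing $p$ by $-p$ if necessary, I may assume $p(\x)=a_1\x_1+\cdots+a_n\x_n$ with $a_i\ge 0$ for all $i$, so that $H(p)=\{\v\in\R^n:\langle a,\v\rangle\ge 0\}$. The minor lift is the linear form $P(\X)=\sum_i a_i \X_{ii}=\tr(D\X)$, where $D=\Diag(a_1,\ldots,a_n)$. Since $P$ is linear in $\X$ and hyperbolic with respect to the identity (by \Cref{thm:minor_lift_stable}), its hyperbolicity cone is simply $H(P)=\{A\in\Sym : \tr(DA)\ge 0\}$.

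The key input I would invoke is Schur's theorem: if $A\in\Sym$ has eigenvalue vector $\mu=(\mu_1,\ldots,\mu_n)$, then the diagonal $d=(A_{11},\ldots,A_{nn})$ is majorized by $\mu$, and hence lies in the convex hull of the $\fS_n$-orbit of $\mu$. Thus I can write
\[
    d \ = \ \sum_{\sigma\in\fS_n} c_\sigma\, \sigma(\mu),
\]
for nonnegative scalars $c_\sigma$ summing to $1$, where $\sigma$ acts by permuting coordinates.

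Now suppose $A\in H(P)$, so that $\langle a,d\rangle=\tr(DA)\ge 0$. Substituting the convex combination gives
\[
    0 \ \le \ \langle a,d\rangle \ = \ \sum_{\sigma\in\fS_n} c_\sigma\, \langle a,\sigma(\mu)\rangle.
\]
By an averaging (pigeonhole) argument, some permutation $\sigma_0$ must satisfy $\langle a,\sigma_0(\mu)\rangle\ge 0$. But $\sigma_0(\mu)$ is itself an eigenvalue vector of $A$ (the spectrum is a multiset, invariant under reordering), and the inequality $\langle a,\sigma_0(\mu)\rangle\ge 0$ is exactly the condition $\sigma_0(\mu)\in H(p)$. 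This produces the required eigenvalue vector and completes the proof.

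There is no real obstacle here: the content of the argument is entirely carried by Schur's majorization theorem, which is precisely designed to compare diagonals with spectra, and the two hyperbolicity cones happen to be described by the same linear functional restricted to diagonal/spectral data. The only mild subtlety is to remember that we get the freedom to choose the ordering of eigenvalues in the definition of spectral containment, which is exactly what the permutation $\sigma_0$ supplies.
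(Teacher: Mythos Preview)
Your proof is correct and follows essentially the same route as the paper: both arguments rest on Schur's majorization theorem to obtain $\max_{\sigma\in\fS_n}\langle a,\sigma(\mu)\rangle\ge \langle a,d\rangle=P(A)\ge 0$, and then pick a permutation achieving this. The paper phrases this via the ``$\max$'' version of Schur--Horn (\Cref{thm:schur_horn}), while you unpack it through the Birkhoff/convex-hull description of majorization and an averaging step; these are equivalent formulations of the same inequality.
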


In order to prove this, we will use Schur's contribution to the Schur--Horn
theorem.
\begin{theorem}[Schur]\label{thm:schur_horn}
    Let $p : \R^n \rightarrow \R$ be a homogeneous linear function, and let
    $P$ be the associated minor lift.  Let $A$ be a symmetric matrix and let
    $\lambda$ be an eigenvalue vector for $A$.  Let $\mathfrak{S}_n$ denote
    the symmetric group which acts on $\R^n$ by permuting coordinates.  Let
    $\textnormal{O}(n)$ denote the orthogonal group of $n\times n$ matrices.
    Then
    \[
        \max_{\pi \in \mathfrak{S}_n} p(\pi(\lambda)) = \max_{U \in \textnormal{O}(n)} P(UAU^{\intercal}).
    \]
\end{theorem}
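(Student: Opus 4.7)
The statement is essentially the Schur side of the classical Schur--Horn majorization theorem, repackaged in the language of lpm polynomials. The plan is as follows.

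First I would translate the quantity $P(UAU^\intercal)$ into a linear functional on the diagonal. Since $p(x)=\langle a,x\rangle$ for $a=(a_1,\dots,a_n)\in\R^n$, the minor lift is $P(X)=\sum_i a_i X_{ii}=\langle a,\operatorname{diag}(X)\rangle$. Thus the right-hand side of the claim equals
\[
\max_{U\in\textnormal{O}(n)}\ \langle a,\ \operatorname{diag}(UAU^\intercal)\rangle,
\]
i.e., the support function of the \emph{diagonal orbit} $\mathcal{D}(A)=\{\operatorname{diag}(UAU^\intercal):U\in\textnormal{O}(n)\}\subset\R^n$ evaluated at $a$.

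Second, I would invoke the classical result of Schur: for every orthogonal $U$, the vector $\operatorname{diag}(UAU^\intercal)$ is majorized by the eigenvalue vector $\lambda$ of $A$, and consequently lies in the permutohedron
\[
\mathcal{P}(\lambda)\ :=\ \conv\{\tau(\lambda):\tau\in\fS_n\}.
\]
(This follows from a short doubly-stochastic argument: if $A=V\Diag(\lambda)V^\intercal$, then $\operatorname{diag}(UAU^\intercal)=S\lambda$ where $S_{ij}=(UV)_{ij}^2$ is doubly stochastic, and Birkhoff's theorem writes $S$ as a convex combination of permutation matrices.) Therefore $\mathcal{D}(A)\subseteq\mathcal{P}(\lambda)$, so
\[
\max_{U\in\textnormal{O}(n)}P(UAU^\intercal)\ \le\ \max_{y\in\mathcal{P}(\lambda)}\langle a,y\rangle\ =\ \max_{\tau\in\fS_n}\langle a,\tau(\lambda)\rangle\ =\ \max_{\tau\in\fS_n}p(\tau(\lambda)),
\]
where the middle equality uses that a linear function on a polytope attains its maximum at a vertex, and the vertices of $\mathcal{P}(\lambda)$ are exactly the permutations of $\lambda$.

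For the reverse inequality, every permutation of $\lambda$ is itself realized as a diagonal: choosing an orthogonal diagonalizer $V$ of $A$ and a permutation matrix $\Pi_\tau$, the matrix $U=\Pi_\tau V^\intercal$ is orthogonal and $UAU^\intercal=\Pi_\tau\Diag(\lambda)\Pi_\tau^\intercal=\Diag(\tau(\lambda))$, so $P(UAU^\intercal)=p(\tau(\lambda))$. Taking the maximum over $\tau$ gives the $\ge$ direction, completing the proof. No real obstacle arises once Schur's majorization inequality is in hand; the only content is the dictionary between $P$ and the linear functional $\langle a,\cdot\rangle$ on the diagonal together with the observation that the maximum of a linear form over the permutohedron is attained at a vertex.
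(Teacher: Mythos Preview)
Your proof is correct and follows the standard route via Schur's majorization inequality, Birkhoff's theorem, and the vertex description of the permutohedron. Note, however, that the paper does not actually supply a proof of this statement: it is stated as the classical result due to Schur and simply invoked to prove \Cref{thm:lineareigprop}. So there is no ``paper's own proof'' to compare against; you have filled in what the paper treats as background.
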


\begin{proof}[Proof of \Cref{thm:lineareigprop}]
    Suppose that $A \in H(P)$, which is equivalent to $P(A) \ge 0$.  By the
    Schur--Horn theorem, there is some eigenvalue vector of $A$, say
    $\lambda$, so that $p(\lambda) \ge P(A) \ge 0$. Thus, there is an
    eigenvalue vector of $A$ contained in $H(p)$ as desired.
\end{proof}

We will see in \Cref{sec:schurhornprop} that if an appropriate generalization of the Schur-Horn theorem holds, then we would be able to show the spectral containment property for a large class of polynomials.

\subsection{Operations Preserving the Spectral Containment Property}
In this section we prove that the spectral containment property is preserved under some simple operations involving adjoining a new variable.
\begin{lemma}
Let $q\in\R[x_1,\ldots,x_n]$ be stable, multiaffine and homogeneous. Let $p\in\R[x_0,\ldots,x_n]$ be defined by $p(x_0, \dots, x_n) = q(x_1, \dots, x_n)$.
If $q$ has the spectral containment property, then $p$ has the spectral containment property.
\end{lemma}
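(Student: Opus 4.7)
The plan is to reduce the spectral containment of $p$ to that of $q$ via Cauchy's interlacing theorem, using the fact that $H(q)$ is upward closed (contains $\R^n_{\ge 0}$). Since $p$ does not involve $x_0$, every monomial of $p$ is indexed by a subset of $\{1,\ldots,n\}$, so the minor lift factors as $P(X) = Q(X_{\{1,\ldots,n\}})$ for every symmetric $(n+1)\times(n+1)$ matrix $X$, where $X_{\{1,\ldots,n\}}$ is the principal submatrix obtained by deleting the $0$-th row and column. Hence $A \in H(P)$ if and only if $A' := A_{\{1,\ldots,n\}} \in H(Q)$, and $H(p) = \R \times H(q)$, so an eigenvalue vector $\lambda = (\lambda_0,\ldots,\lambda_n)$ of $A$ lies in $H(p)$ exactly when $(\lambda_1,\ldots,\lambda_n) \in H(q)$.

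Given $A \in H(P)$, let $\rho_0 \le \cdots \le \rho_n$ be the sorted eigenvalues of $A$ and $\mu_1 \le \cdots \le \mu_n$ those of $A'$. Cauchy's interlacing theorem yields $\mu_i \le \rho_i$ for $i = 1,\ldots,n$. Because $A' \in H(Q)$ and $q$ has the spectral containment property, there exists a permutation $\sigma \in \fS_n$ with $(\mu_{\sigma(1)},\ldots,\mu_{\sigma(n)}) \in H(q)$.

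The technical heart of the argument is the upward-closure claim $H(q) + \R^n_{\ge 0} \subseteq H(q)$. Since $H(q)$ is a convex cone, this reduces to showing $e_j \in H(q)$ for each $j$. Writing $q = q_0 + x_j q_1$ with $q_0, q_1$ free of $x_j$, and using that all nonzero coefficients of $q$ share a sign (assumed nonnegative), a direct computation gives
\[
    q(e_j - t\mathbf{1}) \;=\; (-t)^{d-1}\bigl[\, s_j - t\, q(\mathbf{1})\,\bigr],
\]
where $d = \deg q$, $s_j = (\partial q / \partial x_j)(\mathbf{1}) \ge 0$, and $q(\mathbf{1}) > 0$ for $q \ne 0$. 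All roots in $t$ are nonnegative, so $e_j \in H(q)$, from which the upward closure follows.

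To conclude, $(\rho_{\sigma(1)},\ldots,\rho_{\sigma(n)})$ dominates $(\mu_{\sigma(1)},\ldots,\mu_{\sigma(n)}) \in H(q)$ coordinate-wise by the interlacing, so upward closure places it in $H(q)$ as well. Setting $\lambda_0 = \rho_0$ and $\lambda_i = \rho_{\sigma(i)}$ for $i \ge 1$ produces an eigenvalue vector of $A$ in $H(p)$, establishing the spectral containment property of $p$. The main obstacle is the upward-closure step; everything else is assembly.
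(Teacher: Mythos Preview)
Your proof is correct and follows essentially the same route as the paper's: reduce $H(P)$ to $H(Q)$ via the identity $P(X)=Q(X_{\{1,\ldots,n\}})$, apply Cauchy interlacing, invoke the spectral containment of $q$, and then use that $H(q)$ is a convex cone containing $\R^n_{\ge 0}$ to absorb the nonnegative shifts $\rho_i-\mu_i$. The only cosmetic difference is that you prove $e_j\in H(q)$ by an explicit root computation, whereas the paper simply cites the standard fact that the hyperbolicity cone of a homogeneous stable polynomial contains the nonnegative orthant.
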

\begin{proof}
First note that $x = (x_0,\ldots,x_n) \in H(p)$ if and only if $(x_1,\ldots,x_n) \in H(q)$.
Let $X \in H(P)$, then we can divide $X$ into blocks as
\[
X = \begin{pmatrix}
        X_{00} & v^{\intercal}\\
        v & M
\end{pmatrix}.
\]
Here, $M$ is equal to $X|_{[n]}$, and $v$ is some element of $\R^n$.

If $I_m$ is the $n\times n$ identity matrix, we can see from the definition of $P$ that $P(X + tI_{n+1}) = Q(M + tI_{n})$. Therefore, for $t \ge 0$, $Q(M + tI_n) = P(X+tI_{n+1}) \ge 0$, which implies $M \in H(Q)$. Let $\lambda(M)$ and $\lambda(X)$ be eigenvalue vectors of $M$ and $X$ respectively, with the property that the entries of $\lambda(M)$ and $\lambda(X)$ appear in increasing order.
The Cauchy interlacing inequalities say that

\[
    \lambda_{0}(X) \le 
    \lambda_{1}(M) \le 
    \lambda_{1}(X) \le 
    \lambda_{2}(M) \le 
    \lambda_{2}(X) \le \dots \le
    \lambda_{{n}}(M) \le
    \lambda_{{n}}(X).
\]
Thus for $i\in[n]$ we can write $\lambda_i(X)=\lambda_i(M)+\epsilon_i$ for some $\epsilon\geq0$. Since $q$ has the spectral containment property, there is a permutation $\sigma$ such that $(\lambda_{\sigma(i)}(M))_{1\leq i\leq n}\in H(q)$. Since the hyperbolicity cone of the stable polynomial $q$ is convex and contains the nonnegative orthant, we also have $(\lambda_{\sigma(i)}(X))_{1\leq i\leq n}=(\lambda_{\sigma(i)}(M)+\epsilon_{\sigma(i)})_{1\leq i\leq n}\in H(q)$. This implies that $(\lambda_0(X),\lambda_{\sigma(1)}(X),\ldots,\lambda_{\sigma(n)}(X))\in H(p)$.
\end{proof}

The spectral containment property is also preserved when multiplying by a
new variable.

\begin{prop}\label{lem:multiplyx0}
    Let $q\in\R[x_1,\ldots,x_n]$ be stable, multiaffine and homogeneous. Let
    $p\in\R[x_0,\ldots,x_n]$ defined by $p(x_0, \dots, x_n) = x_0q(x_1, \dots,
    x_n)$.  If $q$ has the spectral containment property, then $p$ has the
    spectral containment property.  
\end{prop}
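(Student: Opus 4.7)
The plan is to parallel the proof of the preceding lemma, with one extra step needed to guarantee that the coordinate in position $0$ is nonnegative, as now $H(p) = \R_{\geq 0} \times H(q)$ requires this. First I would apply \Cref{lem:lpm_projection} to the partition $\Pi = \{\{0\}, [n]\}$: for any $X \in H(P)$ the projection $\pi_{\Pi}(X) = \Diag(X_{00}) \oplus M$, with $M = X|_{[n]}$, again lies in $H(P)$. Since $P$ evaluated on such block-diagonal matrices factors as $P(\pi_{\Pi}(X) - tI) = (X_{00}-t)\,Q(M - tI_n)$, the requirement that all roots be nonnegative forces $X_{00} \geq 0$ together with $M \in H(Q)$.

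Next I would invoke the spectral containment property of $q$ applied to $M$: there exists $\sigma \in \fS_n$ such that $(\lambda_{\sigma(i)}(M))_{i=1}^n \in H(q)$. Order the eigenvalues $\lambda_0(X) \leq \cdots \leq \lambda_n(X)$ and $\lambda_1(M) \leq \cdots \leq \lambda_n(M)$ in increasing fashion. Cauchy's interlacing theorem gives $\lambda_i(X) \geq \lambda_i(M)$ for $i \in [n]$, so writing $\lambda_i(X) = \lambda_i(M) + \epsilon_i$ with $\epsilon_i \geq 0$ and using that $H(q)$ is a convex cone containing $\R_{\geq 0}^n$, we obtain $(\lambda_{\sigma(i)}(X))_{i=1}^n \in H(q)$. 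In the easy case $\lambda_0(X) \geq 0$ the vector $(\lambda_0(X), \lambda_{\sigma(1)}(X), \ldots, \lambda_{\sigma(n)}(X))$ then lies in $\R_{\geq 0} \times H(q) = H(p)$, which finishes the argument.

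The main obstacle is the remaining case $\lambda_0(X) < 0$. Here I would swap: place a nonnegative eigenvalue $\lambda_{j^*}(X)$ in position $0$, which is possible since $\lambda_n(X) \geq X_{00} \geq 0$ guarantees that at least one eigenvalue of $X$ is nonnegative, and then show that the remaining $n$ eigenvalues of $X$ admit a permutation lying in $H(q)$. Naively, replacing $\lambda_{j^*}(X)$ by $\lambda_0(X)$ in some coordinate degrades that coordinate below the corresponding $\mu$-value, and $H(q)$ is not closed under entry-wise decrease, so a more delicate argument is required. I expect the right choice of $j^*$ combined with the a priori inequality $P(X) \geq 0$ to resolve this: expanding $P(X) = X_{00}\,Q(M) - \sum_T a_T\, (v|_T)^\top\, \operatorname{adj}(M|_T)\, v|_T$ where $v = (X_{0i})_{i \in [n]}$, the positivity of $P(X)$ precludes the off-diagonal vector $v$ from growing so large that it would produce eigenvalue patterns incompatible with $H(p)$ (as can be seen in small examples, where the naively problematic configurations correspond precisely to $P(X) < 0$). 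Making this quantitative, likely via an interlacing-type bound on $\lambda_0(X)$ in terms of $\mu_1(M)$ and $\|v\|$, is the principal technical difficulty of the proof.
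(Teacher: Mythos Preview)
Your proposal correctly identifies the easy case $\lambda_0(X)\geq 0$, but the case $\lambda_0(X)<0$ is a genuine gap, not merely a technicality, and the ``swap plus quantitative bound'' strategy you sketch does not seem to go through. The issue is structural: Cauchy interlacing applied to the principal submatrix $M=X|_{[n]}$ gives no lower bound whatsoever on $\lambda_0(X)$, and there is no obvious way to trade a large positive eigenvalue of $X$ into position $0$ while keeping the remaining $n$ eigenvalues in $H(q)$, since $H(q)$ need not be closed under coordinatewise decrease.

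The paper's proof avoids this difficulty by replacing the principal submatrix $M$ with the \emph{Schur complement} $X/0 = M - X_{00}^{-1}vv^\transpose$ (for $X_{00}>0$; the boundary case follows by continuity). One first shows, via the identity $P(X)=X_{00}\,Q(X/0)$ and its shifted version $P(X+tJ)=X_{00}\,Q(X/0+tI_n)$ with $J=0\oplus I_n$, that $X/0\in H(Q)$; this is the content of \Cref{lem:sup}. The key advantage is the decomposition
\[
X \;=\; \begin{pmatrix}0&0\\0&X/0\end{pmatrix} \;+\; \begin{pmatrix}X_{00}&v^\transpose\\v&X_{00}^{-1}vv^\transpose\end{pmatrix},
\]
where the second summand is rank-one \emph{positive semidefinite}. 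By Weyl's inequality, the sorted eigenvalues of $X$ therefore dominate, entry by entry, the sorted eigenvalues of $X':=0\oplus(X/0)$. Since the eigenvalues of $X'$ are exactly $0$ together with the eigenvalues of $X/0$, and the spectral containment property of $q$ places some ordering of the latter in $H(q)$, the vector $0\oplus\lambda(X/0)$ lies in $H(p)=\R_{\geq 0}\times H(q)$. Adding the nonnegative Weyl increments keeps it there, and in particular the coordinate in position $0$ is automatically $\geq 0$. Thus the Schur complement, not the principal submatrix, is the right object: it converts the troublesome off-diagonal column $v$ into a rank-one PSD perturbation, which is exactly what is needed to control $\lambda_0(X)$ from below.
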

Before we show this, we need another lemma.
Let $X$ be a matrix written in block form as 
\[
    X = \begin{pmatrix} X_{00} &
v^{\intercal}\\ v & M \end{pmatrix} 
\]
and $X_{00} \neq 0$.
We write $X / 0 := M - X_{00}^{-1}vv^{\intercal}$ for the Schur complement.
\begin{lemma}\label{lem:sup}
    Let $q \in \R[x_1, \dots, x_n]$ be stable, multiaffine and homogeneous.
    Let $p = x_0 q \in \R[x_0, \dots, x_n]$, and $X \in H(P)$, with $X_{00} >
    0$, then $X / 0 \in H(Q)$.
\end{lemma}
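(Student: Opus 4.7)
The plan is to establish the key algebraic identity
\[
    P(X) \;=\; X_{00}\cdot Q(X/0), \qquad \text{valid whenever } X_{00}\neq 0,
\]
and to use it to transport hyperbolicity from $P$ to $Q$ along a PSD direction added to $X$.

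First I would prove the identity term by term. Writing $q=\sum_{J\subseteq\{1,\dots,n\}} a_J\, x^J$, we obtain $p=x_0 q=\sum_{J} a_J\, x_0\, x^J$, so by definition of the minor lift
\[
    P(X) \;=\; \sum_{J\subseteq\{1,\dots,n\}} a_J\,\det\bigl(X_{\{0\}\cup J}\bigr).
\]
Each submatrix $X_{\{0\}\cup J}$ has the $(0,0)$-entry $X_{00}$ and bottom-right block equal to $M_J$ where $M:=X|_{[n]}$; writing $v_J=(X_{0j})_{j\in J}$, the Schur-complement formula for $X_{00}\neq 0$ gives $\det\bigl(X_{\{0\}\cup J}\bigr)=X_{00}\det(M_J-X_{00}^{-1}v_J v_J^{\transpose})=X_{00}\det\bigl((X/0)_J\bigr)$. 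Summing over $J$ then yields $P(X)=X_{00}\,Q(X/0)$.

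Second, let $J_0$ denote the $(n+1)\times(n+1)$ diagonal matrix with $(J_0)_{00}=0$ and $(J_0)_{ii}=1$ for $i\geq 1$. Since $J_0$ is positive semidefinite we have $J_0\in H(P)$ by \Cref{thm:minor_lift_stable}, and so $X+sJ_0\in H(P)$ for all $s\geq 0$ because $H(P)$ is a convex cone. A direct computation shows $(X+sJ_0)_{00}=X_{00}>0$ and $(X+sJ_0)/0=X/0+sI_n$, so the identity specializes to
\[
    P(X+sJ_0)\;=\;X_{00}\cdot Q\bigl(X/0+sI_n\bigr).
\]

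Third, I would conclude. Assume first $X\in\operatorname{int} H(P)$; then $X+sJ_0$ remains in $\operatorname{int} H(P)$ for every $s\geq 0$, since adding any element of a convex cone to one of its interior points produces another interior point. Hence $P(X+sJ_0)>0$ and therefore $Q(X/0+sI_n)>0$ for every $s\geq 0$. Since $Q$ is hyperbolic with respect to $I_n$, the univariate polynomial $s\mapsto Q(X/0+sI_n)$ is real-rooted; being strictly positive on $[0,\infty)$, all of its roots must be strictly negative, which is exactly the statement $X/0\in\operatorname{int} H(Q)$. For $X\in\partial H(P)$ with $X_{00}>0$, apply the interior case to $X+\varepsilon I_{n+1}\in\operatorname{int} H(P)$: the condition $X_{00}+\varepsilon>0$ is preserved, the Schur complement $(X+\varepsilon I_{n+1})/0$ is continuous in $\varepsilon$, and closedness of $H(Q)$ gives $X/0\in H(Q)$ in the limit $\varepsilon\to 0$.

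The main obstacle is proving the identity of the first step; once $P$ is expanded as a sum of principal minors of $X$ indexed by subsets containing $0$, the Schur-complement formula delivers it term by term. The remainder is a standard propagation of hyperbolicity along a cone direction, using only \Cref{thm:minor_lift_stable} and real-rootedness of the univariate restriction of $Q$.
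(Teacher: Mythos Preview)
Your proof is correct and follows essentially the same route as the paper: establish the identity $P(X)=X_{00}\,Q(X/0)$ via the Schur-complement determinant formula applied term by term, introduce the PSD direction $J_0=\operatorname{diag}(0,I_n)$, and use $(X+sJ_0)/0=X/0+sI_n$ to transfer membership in the hyperbolicity cone. Your treatment is in fact slightly more careful than the paper's, which asserts directly that $Q(X/0+tI_n)\ge 0$ for all $t\ge 0$ forces $X/0\in H(Q)$; you avoid the (minor) worry about even-multiplicity positive roots by first handling the interior case with strict inequalities and then passing to the boundary by continuity.
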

\begin{proof}
    Note that a vector $x = (x_0,x_1,\dots,x_n) \in H(p)$ if and only if $x_0
    \ge 0$ and $(x_1,\dots,x_n) \in H(q)$.  Recall the determinant formula for
    Schur complements: for any $n\times n $ matrix $X$, 
    \[
        \det(X) = X_{00} \det(X / 0).
    \]
    Also, it is not hard to see from the definition that if $S\subseteq
    \{0,1,\dots, n\}$, and $ 0\in S$, then
    \[
        X|_{S} / 0 = X/0|_{(S\setminus 0)}\, ,
    \]
    that is, Schur complements interact naturally with taking submatrices.
    Therefore,
    \[
        P(X) = \sum_{S \subseteq \{0, \dots, n\}} a_S \det(X|_S) =  \sum_{S
        \subseteq \{0, \dots, n\}} a_S X_{00}\det((X /0)|_{S\setminus 0}) =
        X_{00} Q(X / 0)
    \]
    Thus, if $X \in H(P)$ and $X_{00} > 0$, then
    \[
        Q(X / 0) = \frac{P(X)}{X_{00}} \ge 0
    \]
    We can strengthen this result by noting that if we let $J$ be the block
    diagonal matrix given by 
    \[
        J = \begin{pmatrix}
                0 & 0\\
                0 & I_n,
            \end{pmatrix}
    \]
    then $J \in H(P)$, since it is in particular positive semidefinite.  It is
    clear from the definition that $X / 0 + tI_{n+1} = (X+tJ) / 0$.  Thus, we
    have that for all $t \ge 0$,
    \[
        Q(X / 0 + tI_{n+1}) = Q((X+tJ) / 0) = \frac{P(X + tJ)}{X_{00}} \ge 0,
    \]
which implies that $X / 0 \in H(Q)$.
\end{proof}

\begin{proof}[Proof of \Cref{lem:multiplyx0}]
First assume that $X_{00} > 0$.
By \Cref{lem:sup}, and the spectral containment property for $q$, we have that there is an ordering of the eigenvalues of $X / 0$ so that $\lambda(X / 0) \in H(q)$.

Now, we can write
\[
    X = \begin{pmatrix} 0 & 0 \\ 0 & X / 0 \end{pmatrix} + \begin{pmatrix} X_{00} & v^{\intercal} \\ v & X_{00}^{-1}vv^{\intercal} \end{pmatrix},
\]
where the second term is a rank 1 positive semidefinite matrix.

Let $X' = \begin{pmatrix} 0 & 0 \\ 0 & X / 0 \end{pmatrix}$.
Note that $X'$ is block diagonal, so that if $\lambda(X')$ is an eigenvalue vector for $X/0$, then $0 \oplus \lambda(X')$ is an eigenvalue vector for $X'$.
In particular, by ordering the entries appropriately, $\lambda(X') \in H(p)$, from our characterization of $H(p)$ in terms of $H(q)$.

By the Weyl inequalities, there is an ordering of the eigenvalues of $X$ so that $\lambda_i(X) \ge \lambda_i(X')$ for each $i$. This implies that 
\[
    \lambda(X) = (0\oplus \lambda(X')) + v
\]
where $v$ is a nonnegative vector, and therefore $v$ is in $H(p)$. Therefore, $\lambda \in H(p)$.

The case of $X_{00}=0$ follows from continuity of eigenvalues. Observe that if $X$ is in the interior of $H(P)$, then $X_{00} > 0$, and also, since the eigenvalues of a symmetric matrix vary continuously with the matrix, the property of having an eigenvalue vector in $H(p)$ is closed.
Therefore, since $H(p)$ is closed and has nonempty interior, there is an eigenvalue vector of $X$ in $H(p)$.
\end{proof}

\subsection{Polynomials Interlacing an Elementary Symmetric Polynomial}
The spectral containment property can be proved more easily for polynomials which interlace some elementary symmetric polynomial.

Before stating the main result, we note that the minor lift map preserves interlacing.
\begin{lemma}
Let $p,q\in\R[x_1,\ldots,x_n]$ be stable, multiaffine and homogeneous. Let $P,Q$ be the associated minor lifts. Then $p$ interlaces $q$ if and only if $P$ interlaces $Q$.
\label{lem:interlacingpreservers}
\end{lemma}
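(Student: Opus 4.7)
The plan is to reduce the biconditional to the stability (respectively PSD-stability) of the linear pencil $q+tp$ (respectively $Q+tP$) as $t$ ranges over $\R$, and then to exploit linearity of $\Phi$ together with the fact that diagonal restriction intertwines the two notions of stability.

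First I would invoke the standard pencil characterization of interlacing: two homogeneous stable polynomials $p,q$ satisfy ``$p$ interlaces $q$'' if and only if the pencil $q+tp$ is stable for every $t\in\R$; analogously, $P$ interlaces $Q$ if and only if $Q+tP$ is PSD-stable for every $t\in\R$. The multiaffine version is a classical theorem of Borcea--Br\"and\'en, and the PSD-stable analogue follows from the same Hurwitz-type argument applied on the Siegel upper half-plane.

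Given this characterization, the forward direction is essentially one line. If $p$ interlaces $q$, then for each $t\in\R$ the polynomial $q+tp$ is homogeneous, multiaffine, and stable, and since $\Phi$ is linear we have $\Phi(q+tp)=Q+tP$. Applying \Cref{thm:minor_lift_stable} to $q+tp$ then yields that $Q+tP$ is PSD-stable, so $P$ interlaces $Q$.

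For the reverse direction I would restrict to diagonal matrices. Assume $Q+tP$ is PSD-stable for every $t\in\R$. If $z=(z_1,\dots,z_n)\in\C^n$ has $\textup{Im}(z_j)>0$ for every $j$, then $\Diag(z_1,\dots,z_n)$ is a complex symmetric matrix whose imaginary part $\Diag(\textup{Im}(z_1),\dots,\textup{Im}(z_n))$ is positive definite, so $(Q+tP)(\Diag(z))\neq 0$. By construction of the minor lift, $P(\Diag(z))=p(z)$ and $Q(\Diag(z))=q(z)$, hence $q(z)+tp(z)\neq 0$ for every $z$ in the open upper half-plane product. Therefore $q+tp$ is stable for every $t\in\R$, so $p$ interlaces $q$.

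The main obstacle is simply confirming that the pencil characterization exactly matches whatever notion of ``interlacing'' is intended in the paper (degree conventions, and whether one-sided or two-sided interlacing is meant); once that is pinned down, both directions follow from linearity of $\Phi$ combined with \Cref{thm:minor_lift_stable} and the elementary observation that diagonal substitution sends the Siegel upper half-plane into itself.
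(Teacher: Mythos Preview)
Your reverse direction (restricting to diagonal matrices) is correct and matches the paper's one-line argument. The forward direction, however, has a genuine gap.

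You assert that $q+tp$ is ``homogeneous, multiaffine, and stable'' and then invoke \Cref{thm:minor_lift_stable}. But in the intended application (\Cref{lem:interlacing_e_k}) one has $p=e_{d-1}$ interlacing a degree-$d$ polynomial, so the pencil $q+tp$ is \emph{not} homogeneous. \Cref{thm:minor_lift_stable} is stated only for homogeneous inputs, and its proof genuinely uses homogeneity: it relies on the fact that a homogeneous stable polynomial has all coefficients of one sign (which fails for inhomogeneous stable polynomials such as $x_1-1$), and it passes through \Cref{lem:psd_stable}, whose equivalence also requires homogeneity. So the citation does not go through as written.

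The paper avoids this by using Hermite--Biehler rather than the real pencil, and it never invokes \Cref{thm:minor_lift_stable}. Instead it cites the underlying stability preserver directly: by \cite[Thm.~1.3]{weyl} the operator $T_A\colon r\mapsto \Phi(r)(\Diag(x)+A)$ preserves stability of arbitrary multiaffine polynomials (complex coefficients, no homogeneity needed). Since $p+iq$ is stable by Hermite--Biehler, $T_A(p+iq)=P(\Diag(x)+A)+iQ(\Diag(x)+A)$ is stable; setting all $x_j=t$ shows $P(tI+A)+iQ(tI+A)$ is stable in $t$, and Hermite--Biehler in reverse yields interlacing along every line $A+tI$.

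Your route can be repaired in the same way: replace the appeal to \Cref{thm:minor_lift_stable} by a direct appeal to the stability preserver $T_A$, so that for each real $t$ the polynomial $(Q+tP)(\Diag(x)+A)$ is stable; specializing $x_j=s$ then gives real-rootedness of $(Q+tP)(sI+A)$ in $s$ for all $t$, hence interlacing. At that point the argument coincides with the paper's, just with the real pencil in place of $p+iq$. An alternative fix is to homogenize: $p$ interlaces $q$ if and only if $q+x_0 p$ is stable in one more variable, and \emph{that} polynomial is homogeneous multiaffine, so \Cref{thm:minor_lift_stable} applies to its minor lift on $(n+1)\times(n+1)$ matrices; restricting to block-diagonal matrices and taking a Hurwitz limit then recovers PSD-stability of $Q+tP$.
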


\begin{proof}
Assume that $p$ interlaces $q$.
Then by the multivariate Hermite--Biehler Theorem \cite[Thm.~5.3]{MR2353258} we have that $p+iq$ is stable.
Let $A$ be a symmetric $n\times n$ matrix.
We have to show that $P(tI+A)$ interlaces $Q(tI+A)$. From 
\cite[Thm.~1.3]{weyl} we see that the linear operator $T_A$ that sends a multiaffine polynomial $p$ to the polynomial $P(\Diag(x_1, \dots, x_n) + A)$ is a stability preserver.
Thus $T_A(p+iq)$ is stable. Substituting $t$ for all variables in $T_A(p+iq)$ shows that $P(tI+A)+iQ(tI+A)$ is stable. Now the claim follows from another application of the Hermite--Biehler Theorem. The other direction is clear, since $p$ and $q$ are the respective restrictions of $P$ and $Q$ to the diagonal matrices.
\end{proof}

\begin{lemma} \label{lem:interlacing_e_k}
Suppose that $p$ is a stable, multiaffine and homogeneous polynomial of degree $d$, and that $e_{d-1}$ interlaces $p$.
Further suppose that for any $X \in H(P)$, there is some eigenvalue vector
    $\lambda$ of $X$, such that $p(\lambda) \ge P(X)$.
Then $p$ has the spectral containment property.
\end{lemma}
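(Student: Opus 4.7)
The plan is to combine three ingredients: (i) the fact (Lemma~\ref{lem:interlacingpreservers}) that the minor lift preserves interlacing, (ii) the already-noted spectral containment for elementary symmetric polynomials, and (iii) a classical parity argument inside the hyperbolicity cone of an interlacing polynomial.

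First I would invoke Lemma~\ref{lem:interlacingpreservers} to conclude that $E_{d-1}$ interlaces $P$, and then deduce the standard hyperbolic containment $H(P)\subseteq H(E_{d-1})$: if $A\in H(P)$ the roots of $P(A-tI)$ are nonnegative, and the interlacing inequalities force the roots of $E_{d-1}(A-tI)$ to be nonnegative as well. Next, for any $X\in H(P)$, the permutation symmetry of $e_{d-1}$ together with its spectral containment property implies that \emph{every} eigenvalue vector $\lambda$ of $X$ lies in $H(e_{d-1})$. Using the standing hypothesis of the lemma, I would then pick an eigenvalue vector $\lambda$ of $X$ with $p(\lambda)\ge P(X)\ge 0$.

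The heart of the proof is the following univariate claim: if $\mu\in H(e_{d-1})$ and $p(\mu)\ge 0$, then $\mu\in H(p)$. This is pure interlacing bookkeeping: the $d-1$ roots $s_1\le\cdots\le s_{d-1}$ of $e_{d-1}(\mu-t\mathbf{1})$ interlace the $d$ roots $r_1\le\cdots\le r_d$ of $p(\mu-t\mathbf{1})$, so when $\mu\in H(e_{d-1})$ one has $r_i\ge s_{i-1}\ge 0$ for $i\ge 2$, leaving at most $r_1$ possibly negative. Normalizing so $p(\mathbf{1})>0$ (valid since a stable multiaffine polynomial has all nonzero coefficients of one sign, by \cite[Thm.~6.1]{halfplane}), the identity $p(\mu)=p(\mathbf{1})\prod_i r_i$ shows that the parity of the number of negative $r_i$'s is controlled by the sign of $p(\mu)$; since $p(\mu)\ge 0$, that count is even, hence $0$. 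Thus $\mu\in H(p)$, and applying this to $\lambda$ completes the proof.

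The only delicate point is the parity/interlacing step, which is classical but worth spelling out carefully; everything else is direct bookkeeping built on Lemma~\ref{lem:interlacingpreservers} and the spectral containment for $e_{d-1}$. The degenerate case $p\equiv 0$ is trivial, which justifies the normalization $p(\mathbf{1})>0$ used above.
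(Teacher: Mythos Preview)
Your approach is essentially the same as the paper's: both invoke Lemma~\ref{lem:interlacingpreservers} to get $E_{d-1}$ interlacing $P$, hence $H(P)\subseteq H(E_{d-1})$, so every eigenvalue vector of $X\in H(P)$ lands in $H(e_{d-1})$; and both finish by observing that inside $H(e_{d-1})$ the sign of $p$ controls membership in $H(p)$. The paper phrases this last step as the clean fact ``when $q$ interlaces $p$, a point $x$ lies in the interior of $H(p)$ if and only if $x\in H(q)$ and $p(x)>0$,'' treats $X$ in the interior of $H(P)$ first (so that $p(\lambda)\ge P(X)>0$), and then closes up by continuity of eigenvalues.

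There is one small but genuine gap in your version. Your univariate claim --- that $\mu\in H(e_{d-1})$ and $p(\mu)\ge 0$ together imply $\mu\in H(p)$ --- is false on the boundary. The interlacing $r_1\le s_1\le r_2\le\cdots$ with all $s_i\ge 0$ allows the configuration $r_1<0=s_1=r_2$, in which case $p(\mu)=p(\mathbf{1})\prod_i r_i=0$ yet $\mu\notin H(p)$. The parity argument only bites when $p(\mu)>0$, because a zero factor among $r_2,\ldots,r_d$ kills the product regardless of the sign of $r_1$. The fix is exactly what the paper does: first take $X$ in the interior of $H(P)$, so that $P(X)>0$ forces the strict inequality $p(\lambda)>0$ you actually need, and then extend to all of $H(P)$ by continuity of eigenvalues and closedness of $H(p)$.
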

\begin{proof}
    We first note the fact that if $p$ is any hyperbolic polynomial, and $q$ interlaces $p$, then $x$ is in the interior of $H(p)$ if and only if $x$ is in $H(q)$ and $p(x) > 0$. This follows easily from considering the bivariate case.

    Let $X$ be in the interior of $H(P)$.
    We first want to show that there is an eigenvalue vector of $X$ that is contained in $H(p)$; the case for general $X$ will then follow from the fact that the eigenvalues of a symmetric matrix are continuous as a function of the entries of the matrix.

    Since $e_{d-1}$ interlaces $p$, by \Cref{lem:interlacingpreservers}, we have that $E_{d-1}$ interlaces $P$. From this, we conclude that since $X \in H(P)$, $X$ is contained in $H(E_{d-1})$, and so any vector of eigenvalues of $X$ is contained in $H(e_{d-1})$.

    Let $\lambda$ be any eigenvalue vector of $X$ so that $0 < P(X) \le p(\lambda)$, then we see that this $\lambda$ must then be in the interior of $H(p)$, as desired.
\end{proof}

In \Cref{lem:interlacers_open}, we show that the set of stable multiaffine forms interlacing $e_{d-1}$ is an open subset containing $e_{d}$.
This implies that if we have a hyperbolic polynomial $p$ which is sufficiently close to $e_{d}$, then $p$ will have the spectral containment property as long as for any $X \in H(P)$, there is some eigenvalue vector $\lambda$, so that $p(\lambda) \ge P(X)$.

We will apply this lemma in a few cases, together with some variational characterizations for eigenvalues to show the spectral containment property for some special kinds of polynomials.

\begin{lemma}\label{lem:sosmult}
Let $p,q$ be multiaffine polynomials of degree $d+1$ and $d$, and let $a\in\R^n$. There exist multiaffine polynomials $m_1,\ldots, m_s, n_1,\ldots,n_s$ of degree $d$ such that $$\De_ap\cdot q-p\cdot \De_a q=m_1n_1+\ldots+m_sn_s.$$ 
\end{lemma}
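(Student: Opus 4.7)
The plan is to reduce to monomials via bilinearity and then exhibit an explicit decomposition. Writing $p = \sum_S p_S x^S$ and $q = \sum_T q_T x^T$, the map $(p,q) \mapsto \De_a p \cdot q - p \cdot \De_a q$ is bilinear, so it suffices to prove the claim when $p = x^S$ and $q = x^T$ for subsets $S,T \subseteq [n]$ with $|S|\le d+1$ and $|T|\le d$.

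For monomials one computes directly
\[
    \De_a x^S \cdot x^T - x^S \cdot \De_a x^T \ = \ \sum_{i\in S} a_i\, x^{S\setminus i}\, x^T \ - \ \sum_{j\in T} a_j\, x^S\, x^{T\setminus j}.
\]
The key observation is a monomial cancellation: for each $i \in S \cap T$ one has the identity $x^{S \setminus i} x^T = x^S x^{T \setminus i}$ (both are the monomial with exponent $1_S(\ell)+1_T(\ell)-1_{\{i\}}(\ell)$ at $x_\ell$), so the $S \cap T$ contributions pair off and drop out. The expression collapses to
\[
    \sum_{i\in S\setminus T} a_i\, x^{S\setminus i}\, x^T \ - \ \sum_{j\in T\setminus S} a_j\, x^S\, x^{T\setminus j}.
\]

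Every summand of the first sum is already a product of two multiaffine polynomials of degree $\le d$, since $|S\setminus i|\le d$ and $|T|\le d$. For the second sum the only issue is when $|S| = d+1$, where $x^S$ has degree too large. I would fix this by picking any $k \in S\setminus T$ (which is nonempty, since $|S\setminus T|\ge |S|-|T|\ge 1$) and applying the redistribution identity $x^S\cdot x^{T\setminus j} = x^{S\setminus k} \cdot x^{(T\setminus j)\cup\{k\}}$; this is a routine exponent-by-exponent check that is valid because $k\notin T$. The two new factors have sizes $|S\setminus k| = d$ and $|(T\setminus j)\cup\{k\}| = |T|\le d$, as required.

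The one substantive step is noticing the $S\cap T$-cancellation together with the one-variable redistribution when $|S| = d+1$; after these the argument is nothing more than bookkeeping with indicator functions. Coefficients $a_i$ cause no trouble, as they may be absorbed into either factor of a product.
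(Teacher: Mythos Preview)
Your argument is correct and is precisely the kind of elementary bookkeeping the paper has in mind when it dismisses the proof as ``straightforward.'' Reducing by bilinearity to squarefree monomials, cancelling the $S\cap T$ terms, and shifting a single variable from $x^S$ to $x^{T\setminus j}$ when $|S|=d+1$ is exactly the right mechanism, and your exponent checks are clean.
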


\begin{proof}
This is straightforward.
\end{proof}

\begin{prop}\label{lem:interlacers_open}
There is an open neighborhood $U$ of $e_{d+1}$ in the vector space of multiaffine forms of degree $d+1$ such that every stable multiaffine $p\in U$ of degree $d+1$ is interlaced by $e_d$.
\end{prop}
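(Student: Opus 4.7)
My plan is to use the Wronskian characterization of interlacing together with Newton's inequality and a first-order perturbation analysis that exploits both multiaffineness and real-rootedness.

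By the multivariate Hermite--Biehler theorem invoked in the proof of \Cref{lem:interlacingpreservers}, for a stable multiaffine $p$ of degree $d+1$ we have: $e_d$ interlaces $p$ if and only if the Rayleigh--Wronskian
\[
W_p(x) \;:=\; (\De_{\mathbf{1}}p)(x)\cdot e_d(x) \;-\; p(x)\cdot(\De_{\mathbf{1}} e_d)(x)
\]
is nonnegative on all of $\R^n$. Using the identity $\De_{\mathbf{1}} e_k = (n-k+1)\,e_{k-1}$, we compute, for the base case $p = e_{d+1}$,
\[
W_{e_{d+1}} \;=\; (n-d)\,e_d^2 \;-\; (n-d+1)\,e_{d-1}\,e_{d+1}.
\]
Newton's inequality (pointwise log-concavity of elementary symmetric polynomials of real vectors) gives $e_d^2 \ge \tfrac{(d+1)(n-d+1)}{d(n-d)}\,e_{d-1}e_{d+1}$ whenever $e_{d-1}e_{d+1}\ge 0$ (and the bound is trivial otherwise), which together force $W_{e_{d+1}} \ge 0$ everywhere on $\R^n$. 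A case analysis shows $W_{e_{d+1}}(x) = 0$ precisely on $Z := \{e_d(x) = 0\}\cap\bigl(\{e_{d-1}(x) = 0\}\cup\{e_{d+1}(x)=0\}\bigr)$.

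For openness, write a multiaffine homogeneous perturbation $p = e_{d+1} + \epsilon h$ and expand
\[
W_p \;=\; W_{e_{d+1}} \;+\; \epsilon\bigl((\De_{\mathbf{1}}h)\,e_d - (n-d+1)\,h\,e_{d-1}\bigr) \;+\; O(\epsilon^2).
\]
The crucial observation is that the linear correction vanishes identically on $Z$: on the component $\{e_d = e_{d-1} = 0\}$ both terms of the correction are zero, while on the remaining piece $\{e_d = e_{d+1} = 0,\; e_{d-1}\ne 0\}$ a real vector $x$ would make $\prod_i(t-x_i)$ a real-rooted polynomial with two consecutive zero coefficients in the middle; an iterated Rolle/Gauss--Lucas argument then forces $x$ to have at most $d-1$ nonzero coordinates, and since $h$ is multiaffine homogeneous of degree $d+1$ this automatically gives $h(x)=0$. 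Consequently $W_p$ agrees with $W_{e_{d+1}}$ up to an $O(\epsilon^2)$ correction on a neighborhood of $Z$, while away from $Z$ the quantity $W_{e_{d+1}}$ is bounded below by a positive constant on the unit sphere and therefore dominates the perturbation.

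The main obstacle is the second-order analysis in a neighborhood of $Z$: one must verify that the $O(\epsilon^2)$ term cannot drive $W_p$ below zero near $Z$. I would attack this by invoking \Cref{lem:sosmult} to express the quadratic correction as a sum of products of multiaffine degree-$d$ polynomials, and combine this with the assumption that $p$ is stable (which constrains the admissible perturbations $h$ to a semi-algebraic cone of coefficient vectors) to pin down the sign of the second-order term. Completing this stability-constrained local positivity argument is the technical heart of the proof; once accomplished, the neighborhood $U$ of $e_{d+1}$ within the multiaffine space is delivered by the resulting uniform estimate.
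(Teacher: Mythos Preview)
Your proposal has a genuine gap, and it is not a minor one. You correctly set up the Wronskian criterion, compute $W_{e_{d+1}}$, and identify its real zero set $Z$; your argument that any real $x$ with $e_d(x)=e_{d+1}(x)=0$ has at most $d-1$ nonzero coordinates is also sound. But you explicitly defer what you call the ``technical heart'' --- the local positivity of $W_p$ near $Z$ --- and two observations make this deferral fatal. First, the map $p\mapsto W_p=(\De_{\1}p)\,e_d-p\,(\De_{\1}e_d)$ is \emph{linear} in $p$, so there is no $O(\epsilon^2)$ term at all: writing $p=e_{d+1}+\epsilon h$ one has $W_p=W_{e_{d+1}}+\epsilon L$ exactly, and the ``second-order analysis'' you propose is directed at a term that does not exist. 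Second, the condition $L|_Z=0$ that you verify is far too weak to conclude $W_{e_{d+1}}+\epsilon L\ge 0$ for small $\epsilon$. Already on $\R^2$ the pair $f=x_1^2$, $L=x_1x_2$ satisfies $L|_{\{f=0\}}=0$, yet $f+\epsilon L$ takes negative values for every $\epsilon\neq 0$. What is actually needed is that $L$ vanish to sufficiently high order \emph{transverse} to $Z$, matched against the quadratic vanishing of $W_{e_{d+1}}$ there; you do not establish this, and it is unclear how the stability hypothesis on $p$ (which the paper does not use in this step) would deliver it.

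The paper's proof sidesteps the local analysis entirely with an algebraic certificate. By \Cref{lem:sosmult}, every $W_p$ lies in the finite-dimensional space $V$ spanned by products of two multiaffine degree-$d$ forms. The key input, cited from \cite[Thm.~6.2]{kummer2020spectral}, is that $W_{e_{d+1}}$ lies in the \emph{relative interior} of the cone $\Sigma\subset V$ of sums of squares of multiaffine degree-$d$ forms. Since $p\mapsto W_p$ is linear and lands in $V$, a small perturbation of $p$ moves $W_p$ only slightly within $V$, so $W_p\in\Sigma$ --- hence $W_p\ge 0$ --- for all $p$ in a neighbourhood of $e_{d+1}$. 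The interior-of-SOS fact is precisely the robust strict-positivity statement that replaces the transverse Hessian estimate you would otherwise have to prove by hand; your outline does not reach it.
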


\begin{proof}
Let $I$ be the ideal generated by all multiaffine polynomials of degree $d$ and let $V$ be the degree $2d$ part of $I^2$. Let $\Sigma\subset V$ be the set of all polynomials that can be written as a sum of squares of multiaffine polynomials of degree $d$. It follows from the proof of \cite[Thm.~6.2]{kummer2020spectral} that $\De_e e_{d+1}\cdot e_{d}-e_{d+1}\cdot \De_e e_{d}$ is in the interior of $\Sigma$ (with respect to the euclidean topology on $V$). Thus it follows from \Cref{lem:sosmult} that there is an open neighborhood $U$ of $e_{d+1}$ such that for every stable multiaffine $p\in U$ the polynomial $\De_e p\cdot e_{d}-p\cdot \De_e e_{d}$ is in $\Sigma$. Thus $e_d$ interlaces $p$ by \cite[Thm.~2.1]{interlacers}.
\end{proof}

\subsection{Generalized Schur-Horn Property and the Spectral Containment Property}\label{sec:schurhornprop}
We say that an $n$-variate multiaffine homogeneous polynomial $p$ has the \textbf{Schur-Horn property} if for any $n\times n$ symmetric matrix $X$ with some eigenvalue vector $\lambda$,
\[
    \max_{\pi \in \fS_n} p(\pi(\lambda)) = \max_{U \in O(n)} P(UXU^{\intercal}).
\]
The Schur-Horn property for $p$ is equivalent to the fact that for any $n\times n$ symmetric matrix $X$ with eigenvalue vector $\lambda$,
\[
    \max_{\pi \in \fS_n} p(\pi(\lambda)) \ge P(X).
\]
Another equivalent formulation states that $p$ has the Schur-Horn property if
and only if the maximum of $P(UXU^{\intercal})$ as $U$ varies over $O(n)$ is
obtained for some $U$ such that $UXU^{\intercal}$ is diagonal.

The Schur-Horn theorem states that any linear homogeneous polynomial has the Schur-Horn property. We now relate Schur-Horn property and the spectral containment property.
\begin{theorem}\label{thm:schur_horn_to_eigenvalue}
    Let $p$ is an homogeneous multiaffine form of degree $d$. If $p$ has the
    Schur-Horn property, and $e_{d-1}$ interlaces $p$, then $p$ has the
    spectral containment property.
\end{theorem}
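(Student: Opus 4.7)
The plan is to derive this theorem as a direct consequence of Lemma \ref{lem:interlacing_e_k}. That lemma already reduces the spectral containment property to two things: (i) $e_{d-1}$ interlaces $p$, and (ii) for every $X \in H(P)$ there is some eigenvalue vector $\lambda$ of $X$ with $p(\lambda) \ge P(X)$. Assumption (i) is given by hypothesis, so the entire burden of the proof is to extract (ii) from the Schur--Horn property.

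First I would note that $p$ is itself stable. This is automatic in the paper's setup, since the interlacing relation $e_{d-1} \prec p$ in the multivariate Hermite--Biehler framework (cf.\ the use of \cite[Thm.~5.3]{MR2353258} in the proof of Lemma \ref{lem:interlacingpreservers}) presupposes real-rootedness of the pencil $\alpha e_{d-1} + \beta p$, hence in particular of $p$. This places $p$ in the setting where the spectral containment property is defined and where Lemma \ref{lem:interlacing_e_k} applies.

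Next I would unpack the Schur--Horn property itself. By the equivalent reformulation given just before the theorem, the Schur--Horn property says that for any symmetric matrix $X$ with eigenvalue vector $\lambda$,
\[
    \max_{\pi \in \fS_n} p(\pi(\lambda)) \ \ge \ P(X).
\]
Hence there exists a permutation $\pi^\ast \in \fS_n$ such that $p(\pi^\ast(\lambda)) \ge P(X)$. But $\pi^\ast(\lambda)$ is itself an eigenvalue vector of $X$ (permuting the entries of an eigenvalue vector just reorders the listing of eigenvalues with multiplicity), so setting $\lambda' := \pi^\ast(\lambda)$ gives an eigenvalue vector of $X$ with $p(\lambda') \ge P(X)$. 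This is precisely hypothesis (ii) of Lemma \ref{lem:interlacing_e_k}.

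Both hypotheses of Lemma \ref{lem:interlacing_e_k} being satisfied, we conclude that $p$ has the spectral containment property. There is no real obstacle here beyond correctly matching the two formulations of the Schur--Horn property; the theorem is essentially a clean packaging of Lemma \ref{lem:interlacing_e_k} in the presence of a Schur--Horn-type variational principle.
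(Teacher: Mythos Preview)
Your proposal is correct and follows essentially the same approach as the paper: the Schur--Horn property immediately yields, for every $X\in H(P)$, an eigenvalue vector $\lambda$ with $p(\lambda)\ge P(X)$, and then Lemma~\ref{lem:interlacing_e_k} (together with the hypothesis that $e_{d-1}$ interlaces $p$) gives the spectral containment property. Your added remark about stability of $p$ is a reasonable clarification, but the core argument is identical to the paper's two-line proof.
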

\begin{proof}
    It is clear that if $p$ has the Schur-Horn property, then in particular,
    for any $X \in H(P)$, there is some eigenvalue vector $\lambda$ so that
    $p(\lambda) \ge P(X)$. Therefore, $p$ has the spectral containment
    property by \Cref{lem:interlacing_e_k}.
\end{proof}

Using the Schur-Horn property and our previous lemmas, we can show that a family of stable polynomials have the spectral containment property.
\begin{lemma}\label{lem:schur_horn_add}
    If $p$ is a degree $d$ homogeneous multiaffine polynomial with the Schur-Horn property, then $e_d(x) + p$ also has the Schur-Horn property.
\end{lemma}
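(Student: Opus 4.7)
The plan is to exploit the key fact that $e_d$ is a symmetric polynomial whose minor lift $E_d$ depends only on the eigenvalues of its matrix argument. Concretely, for any symmetric $X$ with eigenvalue vector $\lambda$, we have $E_d(UXU^\intercal) = E_d(X) = e_d(\lambda)$ for every orthogonal $U$, and similarly $e_d(\pi(\lambda)) = e_d(\lambda)$ for every permutation $\pi \in \fS_n$. So $E_d$ is constant on the orbit of $X$ under conjugation by $O(n)$, and $e_d$ is constant on the orbit of $\lambda$ under $\fS_n$.

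Because the minor lift map $\Phi$ is linear, the minor lift of $e_d + p$ is $E_d + P$, where $P = \Phi(p)$. Using the two invariance observations above, I would compute
\[
\max_{U \in O(n)} (E_d + P)(UXU^\intercal) \ = \ E_d(X) + \max_{U \in O(n)} P(UXU^\intercal) \ = \ e_d(\lambda) + \max_{U \in O(n)} P(UXU^\intercal),
\]
and on the permutation side,
\[
\max_{\pi \in \fS_n}(e_d + p)(\pi(\lambda)) \ = \ e_d(\lambda) + \max_{\pi \in \fS_n} p(\pi(\lambda)).
\]
By the Schur--Horn property for $p$, the remaining maxima on the two right-hand sides agree, and the two displayed expressions are therefore equal. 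This is exactly the Schur--Horn property for $e_d + p$.

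There is essentially no obstacle here: the entire argument relies only on the fact that adding an $O(n)$-invariant (resp.\ $\fS_n$-invariant) function to another function does not change the location of the maximum, only its value, combined with the linearity of $\Phi$. The only point worth double-checking is that $E_d$ really is $O(n)$-invariant, which follows because $E_d(X) = e_d(\lambda_1(X),\dots,\lambda_n(X))$ and the eigenvalues of $X$ and $UXU^\intercal$ coincide with multiplicities.
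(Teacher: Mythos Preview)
Your proof is correct and follows essentially the same approach as the paper: pull the $\fS_n$-invariant term $e_d(\lambda)=E_d(X)$ out of both maxima using the symmetry of $e_d$ and the $O(n)$-invariance of $E_d$, then apply the Schur--Horn property of $p$ to equate the remaining terms. The paper's argument is the same three-line computation, just written as a single chain of equalities.
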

\begin{proof}
    It can easily be seen that if $X$ is an $n\times n$ symmetric matrix, with an eigenvalue vector $\lambda$, that
    \begin{align*}
        \max_{\pi \in \fS_n} (e_d(\pi(\lambda)) + p(\pi(\lambda))) &= 
        e_d(\lambda) + \max_{\pi \in \fS_n} p(\pi(\lambda))\\
        & = E_d(X) + \max_{U \in O(n)} P(UXU^{\intercal})\\
        & = \max_{U \in O(n)} E_d(UXU^{\intercal}) + P(UXU^{\intercal}).
    \end{align*}
    This gives the desired result.
\end{proof}

\begin{lemma}
    If $p$ is a degree $d$ homogeneous multiaffine polynomial with the Schur-Horn property, then for $\epsilon > 0$ sufficiently small, $e_d(x) + \epsilon p$ has the spectral containment property.
\end{lemma}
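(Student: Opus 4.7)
The plan is to verify the two hypotheses of \Cref{thm:schur_horn_to_eigenvalue} for the polynomial $q_\epsilon := e_d + \epsilon p$ when $\epsilon > 0$ is sufficiently small: that $q_\epsilon$ has the Schur--Horn property and that $e_{d-1}$ interlaces $q_\epsilon$ (which tacitly requires $q_\epsilon$ to be stable). The Schur--Horn part is immediate: scaling both sides of the Schur--Horn identity for $p$ by $\epsilon$ shows that $\epsilon p$ inherits the Schur--Horn property, and then \Cref{lem:schur_horn_add} applied with $\epsilon p$ in place of $p$ yields the Schur--Horn property of $q_\epsilon$ for every $\epsilon > 0$.

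For the interlacing hypothesis, I would invoke \Cref{lem:interlacers_open} after shifting indices (the proposition's ``$d+1$'' becoming our $d$, and its ``$d$'' becoming our $d-1$), which produces an open neighborhood $U$ of $e_d$ in the space of multiaffine forms of degree $d$ such that every stable $q \in U$ is interlaced by $e_{d-1}$. For all sufficiently small $\epsilon > 0$, continuity of coefficients gives $q_\epsilon \in U$, so the remaining point is to verify that $q_\epsilon$ is itself stable. Once this is in place, \Cref{thm:schur_horn_to_eigenvalue} applies and yields the spectral containment property for $q_\epsilon$.

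The main obstacle is thus the stability step, i.e.\ showing that $e_d$ is an interior point of the set of stable multiaffine forms of degree $d$, so that small perturbations remain stable. My approach would be to use Brändén's characterization of multiaffine stability in terms of the pointwise nonnegativity on $\R^n$ of all Rayleigh differences $(\partial_i q)(\partial_j q) - q \cdot (\partial_i \partial_j q)$, combined with the observation (essentially already contained in the proof of \Cref{lem:interlacers_open}) that for $q = e_d$ these Rayleigh differences lie in the \emph{interior} of the cone of sums of squares of multiaffine forms of the appropriate degree. This openness is inherited from $e_d$ by any sufficiently small perturbation $e_d + \epsilon p$, and hence $q_\epsilon$ is stable for all small enough $\epsilon > 0$, completing the argument.
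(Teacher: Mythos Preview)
Your proof is correct and follows exactly the paper's strategy: \Cref{lem:schur_horn_add} (applied to $\epsilon p$) gives the Schur--Horn property, \Cref{lem:interlacers_open} gives interlacing by $e_{d-1}$, and \Cref{thm:schur_horn_to_eigenvalue} finishes. You are in fact more careful than the paper in explicitly verifying the stability hypothesis needed to invoke \Cref{lem:interlacers_open}; your Rayleigh-difference argument works, though one can also argue more directly that for small $\epsilon$ the polynomial $q_\epsilon$ has all positive coefficients (hence is nonvanishing on the open positive orthant) and is hyperbolic with respect to $\mathbf{1}$ by Nuij's theorem, whence it is stable.
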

\begin{proof}
    By \Cref{lem:interlacers_open}, we see that for $\epsilon$ sufficiently small, $e_d(x) + \epsilon p$ is interlaced by $e_{d-1}$.
    Moreover, by \Cref{lem:schur_horn_add}, we see that $e_d(x) + \epsilon p$ has the Schur-Horn property.
    Therefore, by \Cref{thm:schur_horn_to_eigenvalue}, we see that $e_d(x) + \epsilon p$ has the spectral containment property.
\end{proof}
We now give some examples of polynomials with the Schur-Horn property.

\subsection{The Schur-Horn Property For Degree $n-1$ Polynomials}

\begin{theorem}\label{thm:inverseSchurHorn}
    If $p \in \R[x_1, \dots, x_n]$ is a degree $n-1$ multilinear homogeneous polynomial, then $p$ has the Schur-Horn property.
\end{theorem}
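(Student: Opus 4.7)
The plan is to reduce to the classical Schur-Horn theorem (\Cref{thm:schur_horn}) by exploiting the minor-lift duality from \Cref{rmk:minor_lift_dual}: for a multiaffine $p$ of degree $n-1$ in $n$ variables the dual $p^*$ is linear, and the matrix-level realization of this duality is taking the adjugate.

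Write $p(x) = \sum_{i=1}^n a_i \prod_{j\neq i} x_j$, so that $p^*(x) = \sum_i a_i x_i$, and set $D = \Diag(a_1,\dots,a_n)$. The minor lift of $p^*$ is simply $P^*(Y) = \tr(DY)$. The first step is to identify $P$ via the adjugate: since the principal $(n-1)\times(n-1)$-minors of $X$ are exactly the diagonal entries of $\adju(X)$, we have
\[
    P(X) \ = \ \sum_{i=1}^n a_i (\adju X)_{ii} \ = \ \tr\bigl(D\cdot\adju(X)\bigr) \ = \ P^*(\adju(X)).
\]

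Second, I would verify the equivariance $\adju(UXU^\intercal) = U\adju(X)U^\intercal$ for $U\in O(n)$, which follows from $\adju(AB) = \adju(B)\adju(A)$ together with $\adju(U) = \det(U)U^\intercal$ and $\det(U)^2=1$. Combined with the identity above this gives $P(UXU^\intercal) = P^*(U\adju(X)U^\intercal)$.

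Third, I would recall that $\adju(X)$ has the same orthonormal eigenbasis as $X$ with eigenvalue vector $\mu = (\prod_{j\neq i}\lambda_j)_{i=1}^n$; this is immediate from $\adju(X) = \det(X) X^{-1}$ when $X$ is invertible and extends to all symmetric $X$ by continuity, or equivalently by orthogonally diagonalizing $X$ and applying $\adju$ to a diagonal matrix. Applying the classical Schur-Horn theorem (\Cref{thm:schur_horn}) to the linear form $p^*$ and the symmetric matrix $\adju(X)$ then yields
\[
    \max_{U\in O(n)} P^*\bigl(U\adju(X)U^\intercal\bigr) \ = \ \max_{\pi\in\fS_n} p^*(\pi(\mu)).
\]

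Finally, a reindexing $k=\pi(j)$ establishes the pointwise identity
\[
    p(\pi(\lambda)) \ = \ \sum_i a_i \prod_{j\neq i}\lambda_{\pi(j)} \ = \ \sum_i a_i \prod_{k\neq\pi(i)}\lambda_k \ = \ \sum_i a_i \mu_{\pi(i)} \ = \ p^*(\pi(\mu))
\]
for every $\pi\in\fS_n$, so $\max_\pi p(\pi(\lambda)) = \max_\pi p^*(\pi(\mu))$. Stringing the three equalities together gives the Schur-Horn property for $p$. The one conceptual step is recognizing that the minor-lift duality acts on matrices as the adjugate, turning a degree $n-1$ question into a linear one; once this is in place all remaining manipulations are routine, so I do not anticipate a genuine obstacle.
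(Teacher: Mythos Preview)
Your proposal is correct and follows essentially the same route as the paper: pass to the dual $p^*$, identify $P(X)=P^*(\adju X)$ via the diagonal of the adjugate, and then apply the classical Schur--Horn theorem to the linear form $p^*$ and the matrix $\adju(X)$, using that the eigenvalue vector of $\adju(X)$ is $\mu_i=\prod_{j\neq i}\lambda_j$. If anything, your write-up is slightly more explicit than the paper's, since you spell out the orthogonal equivariance $\adju(UXU^\intercal)=U\,\adju(X)\,U^\intercal$ and the pointwise identity $p(\pi(\lambda))=p^*(\pi(\mu))$ that the paper leaves implicit in its final line.
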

\begin{proof}
    Write $p(x) = \sum_{i=1}^n a_i \prod_{j\in [n] \setminus i}x_i$.
    In this case, 
    \[
        P(X) = \sum_{i=1}^n a_i\det(X|_{[n] \setminus i})
    \]

    Recall that the dual of $p(x)$ was defined in \Cref{sec:minor_lift}, as 
    \[
        p^*(x) = \sum_{i=1}^n a_i x_i.
    \]
    Abusing notation, we define $P^*$ to be 
    \[
        P^*(X) = \sum_{i=1}^n a_i X_{ii}.
    \]

    Define the adjugate matrix of $X$ by $\Adj(X) = \det(X) X^{-1}$.
    By Cramer's rule, the diagonal entries of the adjugate matrix are given by
    \[\Adj(X)_{ii} = \det(X|_{[n] \setminus i}).\]
Hence, using \Cref{rmk:minor_lift_dual}, we see that $P^*(\Adj(X)) = P(X)$.

    The eigenvalues of $\Adj(X)$ are of the form $\mu_j = \prod_{i \in [n] \setminus j} \lambda_i$ where $\lambda$ is an eigenvalue vector of $X$. 
    We see then that $p^*(\mu) = p(\lambda)$. Now we apply the Schur-Horn theorem to the linear form $p^*$ and the matrix $\Adj(X)$ to see that 
    \[
        \max_{\pi \in \fS_n} p^*(\pi(\mu)) = \max_{U \in O(n)} P^*(U^{\intercal}\Adj(X)U).
    \]

    Applying our identities relating the vector $\mu$ to $\lambda$, we see that
    \[
        \max_{\pi \in \fS_n} p(\pi(\lambda)) = \max_{U \in O(n)} P(U^{\intercal}XU).
    \]
\end{proof}

From this, we immediately obtain a corollary.
\begin{cor}
    There is an open set $U$ in the space of degree $n-1$ homogeneous multiaffine polynomials, such that $U$ contains $e_{n-1}$ and every element of $U$ is stable and has the spectral containment property.
\end{cor}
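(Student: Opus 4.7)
The plan is to combine the Schur--Horn property for degree $n-1$ multiaffine forms (\Cref{thm:inverseSchurHorn}) with the interlacing openness result (\Cref{lem:interlacers_open}), after first verifying the (less abstract) fact that $e_{n-1}$ lies in the interior of the set of stable multiaffine forms of degree $n-1$ in $n$ variables.

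First, I would apply \Cref{lem:interlacers_open} with $d = n-2$ to obtain an open neighborhood $U_1$ of $e_{n-1}$ in the vector space $V$ of multiaffine forms of degree $n-1$ such that every stable $p \in U_1$ is interlaced by $e_{n-2}$.

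Second, I would exhibit an explicit open neighborhood $U_2 \subseteq V$ of $e_{n-1}$ consisting entirely of stable polynomials. Every element of $V$ has the form $p(x) = \sum_{i=1}^n a_i \prod_{j \neq i} x_j$, and I would take $U_2 = \{p \in V : a_i > 0 \text{ for all } i \in [n]\}$, which is open and contains $e_{n-1}$. To verify stability, I factor $p(z) = \bigl(\prod_{j} z_j\bigr)\bigl(\sum_{i} a_i/z_i\bigr)$. If every $z_i$ lies in the open upper half-plane, then $z_i \neq 0$ and $1/z_i = \overline{z_i}/|z_i|^2$ has strictly negative imaginary part; since each $a_i > 0$, the sum $\sum_i a_i/z_i$ has strictly negative imaginary part, hence is nonzero, and so $p(z) \neq 0$.

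Finally, set $U = U_1 \cap U_2$. Then $U$ is an open neighborhood of $e_{n-1}$ in $V$, and for any $p \in U$: $p$ is stable by construction of $U_2$; $e_{n-2}$ interlaces $p$ by the conclusion of \Cref{lem:interlacers_open} applied to the stable element $p \in U_1$; $p$ has the Schur--Horn property by \Cref{thm:inverseSchurHorn}, since $\deg p = n-1$; and therefore $p$ has the spectral containment property by \Cref{thm:schur_horn_to_eigenvalue}. The main subtlety is the openness step: stability is a closed condition in general, so one cannot simply invoke an abstract openness principle, but the explicit product factorization available for degree $n-1$ multiaffine forms in $n$ variables renders the verification elementary.
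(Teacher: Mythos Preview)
Your proof is correct and follows the same route the paper implicitly takes: the corollary is stated immediately after \Cref{thm:inverseSchurHorn} with the phrase ``From this, we immediately obtain,'' so the intended argument is exactly the chain \Cref{thm:inverseSchurHorn} $+$ \Cref{lem:interlacers_open} $+$ \Cref{thm:schur_horn_to_eigenvalue} that you spell out. Your explicit verification that the positive-coefficient orthant $U_2$ consists of stable polynomials is a detail the paper omits but that is indeed needed for the statement as written; your factorization argument for it is clean and correct.
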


\subsection{Extensions of Elementary Symmetric Polynomials and the Schur--Horn Property}
We note that it is unclear whether the Schur--Horn property is preserved by adding extra variables. We show that this holds for elementary symmetric polynomials.
\begin{prop}\label{prop:SH}
    Fix natural numbers $d \le k \le n$.
    Let $p = \pm e_d(x_1, \dots, x_k) \in \R[x_1, \dots, x_n]$. Then, $p$ has the Schur-Horn property.
\end{prop}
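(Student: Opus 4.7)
My plan is to reformulate the Schur--Horn property for $\pm p = \pm e_d(x_1,\ldots,x_k) \in \R[x_1,\ldots,x_n]$ as an optimization problem on the Grassmannian $\mathrm{Gr}(k,n)$ and use a critical-point analysis. Since $p$ depends only on the first $k$ variables, its minor lift factors as $P(Y) = E_d^{(k)}(Y_{[k]}) = e_d(\mathrm{spec}(Y_{[k]}))$ for symmetric $Y$, where $E_d^{(k)}$ is the sum of all $d\times d$ principal minors of a $k\times k$ matrix. For $U \in O(n)$, let $V$ be the $n\times k$ matrix formed by the first $k$ columns of $U^{\intercal}$; then $V^{\intercal}V = I_k$ and $P(UXU^{\intercal}) = e_d(\mathrm{spec}(V^{\intercal}XV))$. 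As $U$ varies over $O(n)$, $V$ ranges over the Stiefel manifold, and the value is $O(k)$-invariant, so it descends to a smooth function $f:\mathrm{Gr}(k,n)\to\R$ defined by $f(W) := e_d(\mathrm{spec}(P_W X|_W))$. Since $e_d$ is $\fS_k$-symmetric, $\max_{\pi\in\fS_n} p(\pi(\lambda)) = \max_{T\in\binom{[n]}{k}} e_d(\lambda|_T)$ (and similarly with $\min$). Thus the Schur--Horn property for $\pm p$ is equivalent to
$$\max_{W\in\mathrm{Gr}(k,n)} f(W) = \max_{T\in\binom{[n]}{k}} e_d(\lambda|_T) \quad\text{and}\quad \min_{W} f(W) = \min_{T} e_d(\lambda|_T),$$
and the inequalities $\max_W f \ge \max_T e_d(\lambda|_T)$ and $\min_W f \le \min_T e_d(\lambda|_T)$ are immediate by taking $W$ to be the span of eigenvectors of $X$ indexed by an extremizing $T$.

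For the reverse inequalities, the compactness of $\mathrm{Gr}(k,n)$ guarantees that both extrema of $f$ are attained at critical points. Parametrizing $W$ locally by an orthonormal basis $V_0$ chosen so that $V_0^{\intercal}XV_0 = \mathrm{diag}(\mu)$, and perturbing along tangent directions $H$ with $V_0^{\intercal}H=0$, a direct computation (using the chain rule together with $\partial_{\mu_i} e_d(\mu) = e_{d-1}(\mu_{\setminus i})$) yields the critical-point equation
$$X V_0 D = V_0 A, \qquad D := \mathrm{diag}\bigl(e_{d-1}(\mu_{\setminus i})\bigr)_{i=1}^{k},$$
for some Lagrange multiplier matrix $A \in \R^{k\times k}$. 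When $D$ is invertible, this equation forces $W_0 := \mathrm{range}(V_0)$ to be $X$-invariant, so $W_0$ is spanned by $k$ eigenvectors of $X$ and $f(W_0) = e_d(\lambda|_T)$ for the corresponding $T$. It follows that both $\max_W f$ and $\min_W f$ lie in $\{e_d(\lambda|_T) : T\in\binom{[n]}{k}\}$, which completes the proof in the regular case.

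The principal obstacle is handling critical points at which $D$ has a zero diagonal entry --- that is, $e_{d-1}$ of some $k-1$ of the $\mu_i$'s vanishes --- where the critical-point equation no longer forces $X$-invariance of $W_0$. The cleanest way around this is a perturbation argument: replace $X$ by $X+\varepsilon Y$ for a generic symmetric $Y$ and small $\varepsilon>0$, under which degenerate critical points generically disappear, so that the extrema of $f$ for the perturbed problem are of the form $e_d(\lambda(X+\varepsilon Y)|_{T_\varepsilon})$; continuity of the spectrum and of extrema over the compact Grassmannian as $\varepsilon\to 0$ then recovers the claim. An alternative route is to analyze the Poincar\'e-interlacing polytope $\mathcal{P} = \{\mu\in\R^k : \lambda_i\ge\mu_i\ge\lambda_{i+n-k},\ \mu_1\ge\dots\ge\mu_k\}$ (the set of achievable compression spectra) directly: although $e_d$ is multiaffine in each $\mu_i$, it becomes quadratic along the ``degeneracy facets'' $\mu_i=\mu_{i+1}$ via the expansion $e_d(\ldots,x,x,\ldots) = x^2\, e_{d-2}(\mu_{\mathrm{rest}}) + 2x\, e_{d-1}(\mu_{\mathrm{rest}}) + e_d(\mu_{\mathrm{rest}})$, and one must verify that no extremum on such a facet exceeds $\max_T e_d(\lambda|_T)$ (or falls below $\min_T e_d(\lambda|_T)$).
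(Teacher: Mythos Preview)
Your reformulation as an optimization of $f(W)=e_d(\operatorname{spec}(V^\intercal XV))$ over the Grassmannian is correct, and the first-order analysis showing that a critical point with invertible $D=\operatorname{diag}\bigl(e_{d-1}(\mu_{\setminus i})\bigr)$ must be $X$-invariant is fine. The problem is the degenerate case, and the perturbation argument you offer does not close it.

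The sentence ``degenerate critical points generically disappear'' is false as stated. Take $d=k$. Then $D_{ii}=\prod_{j\ne i}\mu_j$, so $D=0$ whenever at least two of the $\mu_i$ vanish. Any $k$-plane $W$ on which the compression of $X$ has a two-dimensional kernel is therefore automatically a critical point, and such totally isotropic $W$ exist for an open set of $X$ (for instance, every $X$ of signature $(2,2)$ when $n=4$, $k=2$). Perturbing $X$ to $X+\varepsilon Y$ does not remove these critical points; it merely moves them. What saves you in that particular case is that $f$ vanishes there while the true maximum is positive, but you have not argued this, and for general $d<k$ the value of $f$ at a degenerate critical point is not forced to be zero. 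A rigorous completion along your line would require showing, for generic $X$, that no degenerate critical point realizes the extremum---a nontrivial case analysis depending on $d$ that you have not supplied. Your ``alternative route'' through the interlacing polytope has the same defect: the polytope has vertices such as $(\lambda_2,\lambda_2)$ (for $n=3$, $k=2$) that are not of the form $\lambda|_T$, and you explicitly leave the verification on the ordering facets undone.

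The paper bypasses all of this with a single construction. For each $X\in\Sym$ it builds an auxiliary symmetric operator $D^{k,d}X$ on $\wedge^k\R^n$ (sending $v_1\wedge\cdots\wedge v_k$ to the sum over $d$-subsets $S$ of the wedge in which $Xv_i$ replaces $v_i$ for $i\in S$). In the eigenbasis of $X$ this operator is diagonal with eigenvalues $e_d(\lambda_{s_1},\ldots,\lambda_{s_k})$, while in the standard wedge basis its diagonal entries are $E_d(X|_S)$. Since $D^{k,d}(UXU^\intercal)$ is orthogonally similar to $D^{k,d}X$, the classical Schur--Horn theorem applied to $D^{k,d}X$ gives $E_d((UXU^\intercal)|_{[k]})\le\max_T e_d(\lambda|_T)$ (and the analogous minimum inequality) in one stroke, with no degeneracy issues to resolve.
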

    We can reduce this to the classical Schur-Horn theorem. To do this, we require a lemma involving a construction, which is referred to in \cite[Chapter 3]{marvin1973finite} as a derivation of a matrix $X$. 
    \begin{lemma}\label{lem:derivation}
        For any $n\times n$ symmetric matrix $X$, with eigenvalue vector $\lambda$, there exists a $\binom{n}{k}\times \binom{n}{k}$ symmetric matrix $D^{k,d} X$ with the following two properties: 
        \begin{itemize}
            \item The eigenvalues of $D^{k,d} X$ are precisely those real numbers of the form
                \[e_d(\lambda_{s_1}, \lambda_{s_2}, \dots, \lambda_{s_k}),\]
                where we range over all possible values of $s_1, \dots, s_k\in [n]$ so that $s_1< s_2< \dots< s_k$.
            \item The diagonal entries of $D^{k,d} X$ are precisely those of the form $E_d(X|_{S})$, where $S$ ranges over the size $k$ subsets of $[n]$.
        \end{itemize}
    \end{lemma}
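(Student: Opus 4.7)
The plan is to exhibit $D^{k,d}X$ as a coefficient in the characteristic-polynomial-like expansion of an exterior power. Concretely, for the symmetric operator $X$ on $V=\R^n$, consider the exterior power $\Lambda^k(X+tI)$, which acts on $\Lambda^k V \cong \R^{\binom{n}{k}}$ (we identify the standard basis $e_{s_1}\wedge\cdots\wedge e_{s_k}$ with the index $S=\{s_1<\cdots<s_k\}$). Then
\[
    \Lambda^k(X+tI) \;=\; \sum_{j=0}^{k} t^{\,k-j}\,D^{k,j}X,
\]
and I would \emph{define} $D^{k,d}X$ as the coefficient of $t^{k-d}$. Since each entry of $\Lambda^k(X+tI)$ is a $k\times k$ minor of $X+tI$, it is polynomial in $X$, and the symmetry of $X$ is inherited by $\Lambda^k(X+tI)$, so $D^{k,d}X$ is a well-defined symmetric matrix depending polynomially on $X$.

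For the eigenvalues, I would diagonalize: if $X=\sum_i \lambda_i v_i v_i^\transpose$ in an orthonormal basis, then $X+tI=\sum_i (\lambda_i+t)\,v_iv_i^\transpose$, so the vectors $v_{s_1}\wedge\cdots\wedge v_{s_k}$ form an orthonormal eigenbasis of $\Lambda^k(X+tI)$ with eigenvalues
\[
    \prod_{i\in S}(\lambda_i+t) \;=\; \sum_{d=0}^{k} t^{\,k-d}\, e_d(\lambda_{s_1},\ldots,\lambda_{s_k}).
\]
Extracting the coefficient of $t^{k-d}$ shows that $D^{k,d}X$ has eigenvalues $e_d(\lambda_{s_1},\ldots,\lambda_{s_k})$ indexed by the $k$-subsets $S\subseteq[n]$, as required.

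For the diagonal entries, I would use the standard fact that the $(S,S)$-entry of $\Lambda^k Y$ equals $\det(Y|_S)$. Applying this to $Y=X+tI$ yields
\[
    (D^{k,d}X)_{S,S} \;=\; [t^{k-d}]\det(X|_S + tI_k) \;=\; e_d\bigl(\text{eigenvalues of }X|_S\bigr),
\]
which is exactly $E_d(X|_S)$ (by the same spectral/principal-minor identity used for $E_d$ in the introduction). This yields both claims.

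The construction is essentially the ``$d$-th derivation'' of the $k$-th compound matrix from Marcus \cite{marvin1973finite}, so the only real work is recording the bookkeeping above; I do not anticipate a genuine obstacle. The one point requiring a touch of care is verifying that the identification of $\Lambda^k V$ with $\R^{\binom{n}{k}}$ via the ordered basis $\{e_S\}$ is isometric, so that the diagonal entries of $D^{k,d}X$ in the $e_S$-basis really are the $(S,S)$ entries of $\Lambda^k(X+tI)$; but this is immediate from orthonormality of $\{e_{s_1}\wedge\cdots\wedge e_{s_k}\}$.
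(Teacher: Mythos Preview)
Your proposal is correct and is essentially the same construction as the paper's: the paper defines $D^{k,d}X$ directly as the operator $v_1\wedge\cdots\wedge v_k\mapsto\sum_{|S|=d}w_{S,1}\wedge\cdots\wedge w_{S,k}$ with $w_{S,j}=Xv_j$ for $j\in S$ and $w_{S,j}=v_j$ otherwise, which is exactly what you obtain by expanding $\Lambda^k(X+tI)(v_1\wedge\cdots\wedge v_k)=\bigwedge_j(Xv_j+tv_j)$ and extracting the coefficient of $t^{k-d}$. Your generating-function packaging makes the eigenvalue and diagonal-entry checks marginally slicker, but the underlying argument is identical to the paper's.
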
\begin{proof}[Proof of \Cref{lem:derivation}]
    We will define $D^{k,d}X$ in terms of wedge powers. If we regard $X$ as an endomorphism from $\R^n$ to $\R^n$, then $D^{k,d}X$ is defined as an endomorphism of $\wedge^k \R^n$ by letting 
    \[
        D^{k,d}X(v_1\wedge v_2 \wedge \dots \wedge v_k) = \sum_{S \in \binom{[n]}{k}} w_{S,1}\wedge w_{S,1} \wedge \dots \wedge w_{S,k},
    \]
    where
    \[
        w_{S, k} = \begin{cases} Xv_k \text{ if }k\in S\\ v_k \text{ if }k \not \in S\end{cases}.
    \]

    It is not hard to see that if $v_1, \dots, v_k$ are linearly independent eigenvectors of $X$ with eigenvalues $\lambda_1, \dots, \lambda_k$ respectively, then $v_1 \wedge \dots \wedge v_k$ is an eigenvector of $D^{k,d}X$ with eigenvalue $e_d(\lambda_1, \dots, \lambda_k)$, and this clearly implies the first property in \Cref{lem:derivation}.

    On the other hand, if we use the natural basis of $\wedge^k \R^n$ given by
        $\{e_{s_1} \wedge e_{s_2} \wedge \dots \wedge e_{s_k}\}$, where $e_i$ is a standard basis vector, and $s_1 < s_2 <\dots <s_k$, then this basis is orthogonal under the natural inner product of $\wedge^k \R^n$, and also
    \[
        (e_{s_1} \wedge e_{s_2} \wedge \dots
        \wedge e_{s_k})^{\intercal}D^{k,d}X(e_{s_1} \wedge e_{s_2} \wedge \dots
      \wedge  e_{s_k}) = E_{d}(X|_{\{s_1, \dots, s_k\}}).
    \]
    This clearly implies the second property of \Cref{lem:derivation}.
\end{proof}

\begin{proof}[Proof of \Cref{prop:SH}]

   The classical Schur-Horn theorem implies that for any symmetric matrix $X$,
   \[
       \max_{s_1 < s_2 < \dots < s_k} e_d(\lambda_{s_1}, \lambda_{s_2}, \dots, \lambda_{s_k}) \ge 
       \max_{S \in \binom{[n]}{k}} E_d(X|_{S}) \ge  E_d(X|_{1,\dots,k}), 
    \]
    and also that
   \[
       \min_{s_1 < s_2 < \dots < s_k} e_d(\lambda_{s_1}, \lambda_{s_2}, \dots, \lambda_{s_k}) \le 
       \min_{S \in \binom{[n]}{k}} E_d(X|_{S}) \le  E_d(X|_{1,\dots,k}).
    \]

    This first statement implies the Schur-Horn property for $e_d(x_1, \dots, x_k)$, and the second implies the Schur-Horn property for $-e_d(x_, \dots, x_k)$.
    \end{proof}

\subsection{A Small Example of the Schur-Horn Property}

We give one more example of the Schur-Horn property, which is noteworthy because our proof does not appeal to the classical Schur-Horn theorem.
\begin{lemma}
    The polynomial $x_1(x_2+x_3) \in \R[x_1, x_2, x_3, x_4]$ has the Schur-Horn property.
\end{lemma}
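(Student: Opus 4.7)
The plan is a Lagrange multiplier analysis on the $O(4)$-orbit of $X$, combined with a recursive reduction to the analogous problem in three variables. Since the orbit is compact, the maximum of $P(UXU^\intercal)$ is attained at a critical point $Y = UXU^\intercal$, characterized by the matrix equation $[\nabla P(Y), Y] = 0$. A direct computation gives $\nabla P(Y) \in \R^{4\times 4}_{\mathrm{sym}}$ whose fourth row and column vanish, so writing out the off-diagonal commutator entries produces five polynomial constraints on the entries of $Y$ (the $(2,3)$ entry of the commutator vanishes identically).

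The key step is a case analysis on whether $Y_{11}$ and $Y_{14}$ vanish. The crucial algebraic simplification is that when $Y_{11} \neq 0$ and $Y_{14} \neq 0$, two of the equations determine $Y_{24}$ and $Y_{34}$ in terms of $Y_{11}, Y_{12}, Y_{13}, Y_{14}$, and substituting into the third forces $Y_{11}(Y_{22}+Y_{33}) = Y_{12}^2 + Y_{13}^2$, i.e., $P(Y) = 0$. The remaining degenerate sub-cases (with $Y_{11} = 0$) directly yield $P(Y) \leq 0$. The only surviving possibility is $Y_{14} = 0$ together with $Y_{11} \neq 0$, which then forces $Y_{24} = Y_{34} = 0$, so $Y$ is block-diagonal as $Y = A \oplus (Y_{44})$ with $A \in \R^{3 \times 3}_{\mathrm{sym}}$; the two remaining equations on $A$ are precisely the critical-point conditions for the $3 \times 3$ minor lift $P_3$ of $q(x_1,x_2,x_3) = x_1(x_2+x_3)$, and $P(Y) = P_3(A)$.

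I would then apply exactly the same critical-point analysis to $P_3$ on the $O(3)$-orbit of $A$. The two resulting equations force either $A$ to be block-diagonal as $A = (A_{11}) \oplus B$, giving $P_3(A) = A_{11}(\alpha + \beta) = \mu_i(\mu_j + \mu_k)$ with $\{i,j,k\} = \{1,2,3\}$ and eigenvalues of $A$ equal to $(\mu_1, \mu_2, \mu_3)$; or, after a rotation in the $\{2,3\}$-block of $A$, display $A$ as a direct sum that has a zero eigenvalue, in which case $P_3(A) = \mu_a \mu_b = q(\pi(\mu))$ for a suitable $\pi$ placing the zero eigenvalue in position $\pi(2)$. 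Combining over the four possible choices of $Y_{44}$, the block-diagonal critical points realize every value $p(\sigma(\lambda))$ for $\sigma \in \fS_4$.

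To close the argument, the degenerate critical points (with $P(Y) \leq 0$) must not exceed the desired maximum, which reduces to the universal bound $\max_{\sigma \in \fS_4} p(\sigma(\lambda)) \geq 0$: sorting $\lambda_1 \leq \lambda_2 \leq \lambda_3 \leq \lambda_4$, if $\lambda_2+\lambda_3 \geq 0$ then $\lambda_3 \geq 0$ and hence $\lambda_4 \geq 0$, so $\lambda_4(\lambda_2+\lambda_3) \geq 0$; otherwise $\lambda_2 < 0$ and $\lambda_1(\lambda_2+\lambda_3)$ is a product of two negatives. The main obstacle will be the combinatorial bookkeeping of the five critical-point equations, especially handling the edge cases where individual entries of $Y$ vanish; the non-obvious simplification that makes the whole argument go through is the algebraic collapse to $P(Y) = 0$ in the main degenerate branch, which renders all non-block-diagonal critical points harmless.
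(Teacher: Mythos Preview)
Your approach is correct and genuinely different from the paper's. The paper expands $P(UDU^{\intercal})$ explicitly as a quadratic form $\gamma(\lambda)=\sum_{i<j}\gamma_{ij}\lambda_i\lambda_j$ in the eigenvalues, then proves the stronger statement that $\gamma$ lies in the convex hull of the polynomials $\lambda_i(\lambda_j+\lambda_k)$; the facet inequalities of that polytope are verified using the Jacobi complementary-minor identity and Cauchy--Binet applied to the last two rows of $U$. Your route is instead a first-order analysis on the orbit: the commutator equation $[\nabla P(Y),Y]=0$, the algebraic collapse to $P(Y)=0$ when $Y_{11},Y_{14}\neq 0$, the block-diagonal reduction when $Y_{14}=0$, and a second pass in $3\times 3$ after exploiting the $O(2)$-invariance of $P_3$ in the $\{2,3\}$ block to set $A_{13}=0$. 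Both arguments are elementary; the paper's buys a sharper structural statement (the convex-hull containment) at the cost of a polytope computation, while yours is more geometric and arguably more portable, but requires the auxiliary bound $\max_{\sigma}p(\sigma(\lambda))\ge 0$ to dispose of the degenerate critical values.

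A couple of places where your write-up should be tightened before it is complete. First, the sentence ``the two resulting equations force either $A$ to be block-diagonal as $A=(A_{11})\oplus B$'' is only true after the $\{2,3\}$-rotation; you should say explicitly that this rotation preserves both $P_3$ and the critical-point condition (since $\nabla P_3(RAR^{\intercal})=R\,\nabla P_3(A)\,R^{\intercal}$ for $R$ fixing $e_1$), so that the rotated matrix is still a critical point with the same value. Second, the claim that the block-diagonal critical points ``realize every value $p(\sigma(\lambda))$'' is not needed for the inequality $\max_U P(UXU^{\intercal})\le \max_\sigma p(\sigma(\lambda))$; only the converse inequality uses it, and there it follows trivially by taking $U$ to diagonalize $X$ in the desired order. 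With those clarifications your argument is complete.
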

\begin{remark}
    The polynomial $x_1(x_2+x_3) \in \R[x_1, x_2, x_3]$ clearly has the Schur-Horn property, by \Cref{thm:inverseSchurHorn}, but it is not clear that this remains the case if we introduce a new variable.
\end{remark}
\begin{proof}
    Let $D = \Diag(\lambda_1, \lambda_2, \lambda_3, \lambda_4)$. Let $U$ be in $\SO(4)$, and write its columns as 
    \[
        U = \begin{pmatrix} v & w & z & y \end{pmatrix}.
    \]

    It is not hard to see via an explicit computation that
    \[
        P(UDU^{\intercal}) = \det
    \begin{pmatrix}
    \sum_{i=1}^4 \lambda_i v_i^2 & \sum_{i=1}^4 \lambda_i w_iv_i\\
    \sum_{i=1}^4 \lambda_i w_iv_i & \sum_{i=1}^4 \lambda_i w_i^2\\
    \end{pmatrix}
+ \det
    \begin{pmatrix}
    \sum_{i=1}^4 \lambda_i v_i^2 & \sum_{i=1}^4 \lambda_i z_iv_i\\
    \sum_{i=1}^4 \lambda_i z_iv_i & \sum_{i=1}^4 \lambda_i z_i^2\\
    \end{pmatrix}.\\
    \]
We expand this formulation out by multilinearity of the determinant to obtain the following
\begin{align}
\sum_{i=1}^4 \sum_{j=1}^4 \lambda_i \lambda_j
    \left(\det
    \begin{pmatrix}
    v_i^2 & w_jv_j\\
    w_iv_i & w_j^2\\
    \end{pmatrix}
+ \det
    \begin{pmatrix}
    v_i^2 & z_jv_j\\
    z_iv_i & z_j^2\\
    \end{pmatrix}\right)\label{eq:multilinear}\\
= \sum_{i=1}^4 \sum_{j<i} \lambda_i \lambda_j
    \left(\det
    \begin{pmatrix}
    v_i^2 & w_jv_j\\
    w_iv_i & w_j^2\\
    \end{pmatrix}
+ \det
    \begin{pmatrix}
    v_i^2 & z_jv_j\\
    z_iv_i & z_j^2\\
    \end{pmatrix}
+ \det
    \begin{pmatrix}
    v_j^2 & w_iv_i\\
    w_jv_j & w_i^2\\
    \end{pmatrix}
+ \det
    \begin{pmatrix}
    v_j^2 & z_iv_i\\
    z_jv_j & z_i^2\\
    \end{pmatrix}\right)\label{eq:weird}\\
= \sum_{i=1}^4 \sum_{j<i} \lambda_i \lambda_j
    \left(\det
    \begin{pmatrix}
    v_i & w_i\\
    v_j & w_j\\
    \end{pmatrix}^2
+ \det
    \begin{pmatrix}
    v_i & z_i\\
    v_j & z_j\\
    \end{pmatrix}^2 \right).
\end{align}

We can think of this as a polynomial 
\[
    \gamma(\lambda) = \sum_{i = 1}^4 \sum_{j < i}\gamma_{i,j} \lambda_i \lambda_j, 
\]
where
\[
    \gamma_{i,j} = \det
    \begin{pmatrix}
    v_i & w_i\\
    v_j & w_j\\
    \end{pmatrix}^2
+ \det
    \begin{pmatrix}
    v_i & z_i\\
    v_j & z_j\\
    \end{pmatrix}^2.
\]
We make the following claim:
\begin{lemma}
\label{lmma:convex_hull}
$\gamma(\lambda)$ is in the convex hull of the polynomials 
\[
    a_{i,j,k}(\lambda) = \lambda_i(\lambda_j + \lambda_k)
\]
where $\{i, j, k\} \in \binom{[4]}{3}$.
\end{lemma}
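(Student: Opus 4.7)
The plan is to construct an explicit convex combination of the $a_{i,j,k}$'s equal to $\gamma$ via a transportation-type argument. First I would simplify $\gamma_{i,j}$ using the orthonormality of $U\in\SO(4)$. Expanding $\gamma_{i,j}=(v_iw_j-v_jw_i)^2+(v_iz_j-v_jz_i)^2$ and applying the column/row orthonormality identities $v_i^2+w_i^2+z_i^2+y_i^2=1$ and $v_iv_j+w_iw_j+z_iz_j+y_iy_j=0$ (for $i\neq j$) yields
\[
\gamma_{i,j}\ =\ \alpha_i+\alpha_j-\mu_{i,j},
\]
where $\alpha_i:=v_i^2$ and $\mu_{i,j}:=(v_iy_j-v_jy_i)^2$. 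Writing $\beta_i:=y_i^2$, these satisfy $\sum_i\alpha_i=\sum_i\beta_i=\sum_{i<j}\mu_{i,j}=1$, $\sum_{j\neq i}\mu_{i,j}=\alpha_i+\beta_i$, and (crucially) $\mu_{i,j}\le\beta_i+\beta_j$, the last via the Lagrange identity $(v_iy_j-v_jy_i)^2\le(v_i^2+v_j^2)(y_i^2+y_j^2)$ combined with $v_i^2+v_j^2\le 1$.

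Next I would parameterize cherries by ordered pairs $(a,b)$ with $a\neq b$: let $c^a_b\ge 0$ be the coefficient of the cherry $a_{a,j,k}$ with $\{j,k\}=[4]\setminus\{a,b\}$, so that $b$ is the ``excluded'' vertex at center $a$. A cherry centered at $a$ contributes to edge $\{a,m\}$ iff $m$ is one of its leaves, so the coefficient of $\lambda_a\lambda_m$ in $\sum c^a_b\,a_{a,j,k}$ equals $(\alpha_a-c^a_m)+(\alpha_m-c^m_a)$ provided $\sum_{b\neq a}c^a_b=\alpha_a$. Matching this to $\gamma_{a,m}$ produces the transportation constraints
\[
c^a_b+c^b_a=\mu_{a,b}\qquad\text{and}\qquad\sum_{b\neq a}c^a_b=\alpha_a,
\]
together with $c^a_b\ge 0$; this is a fractional edge-orientation LP on $K_4$ with vertex supplies $\alpha$ and edge capacities $\mu$.

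By the Hall / max-flow--min-cut theorem, this LP is feasible iff for every $S\subseteq[4]$,
\[
\mu(E(S))\ \le\ \alpha(S)\ \le\ \mu(E(S))+\mu(\partial S).
\]
I would verify this by cases: $|S|\in\{0,4\}$ is trivial, and $|S|\in\{1,3\}$ follows at once from the row-sum identity $\sum_{j\neq i}\mu_{i,j}=\alpha_i+\beta_i$. The only delicate case is $|S|=2$, $S=\{i,j\}$: the left inequality $\mu_{i,j}\le\alpha_i+\alpha_j$ is exactly $\gamma_{i,j}\ge 0$ (obvious as $\gamma_{i,j}$ is a sum of squares), while the right inequality rearranges (via the row-sum identity) to $\mu_{i,j}\le\beta_i+\beta_j$, which was established in the previous step.

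Given any feasible $c^a_b\ge 0$, the total $\sum_{a,b}c^a_b=\sum_{\{a,b\}}(c^a_b+c^b_a)=\sum_{\{a,b\}}\mu_{a,b}=1$ gives the convex-combination normalization automatically, and the edge bookkeeping above guarantees $\gamma(\lambda)=\sum_{a,b}c^a_b\cdot a_{a,j,k}(\lambda)$, proving the claim. The main obstacle is the $|S|=2$ feasibility check, which pivots on the Cauchy--Schwarz/Lagrange bound $\mu_{i,j}\le\beta_i+\beta_j$; everything else is either bookkeeping or an invocation of a standard transportation feasibility theorem.
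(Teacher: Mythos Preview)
Your proof is correct and takes a genuinely different route from the paper's.

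The paper proceeds by first computing (computationally) the facet description of the convex hull of the twelve $a_{i,j,k}$'s in coefficient space---namely $\gamma_{i,j}\ge 0$, $\gamma_{i,j}+\gamma_{k,l}\le 1$ for complementary pairs, and $\sum\gamma_{i,j}=2$---and then verifying each inequality for the particular $\gamma_{i,j}$, the hardest being $\gamma_{i,j}+\gamma_{k,l}\le 1$, which it proves via the Jacobi complementary minors identity together with Cauchy--Binet. Your approach is constructive: you exhibit the convex combination directly as the solution of a fractional orientation problem on $K_4$ and verify feasibility via a Gale--Hoffman / max-flow--min-cut condition. The key ingredient that makes your approach work is the clean identity $\gamma_{i,j}=\alpha_i+\alpha_j-\mu_{i,j}$ (with $\alpha_i=v_i^2$, $\mu_{i,j}=(v_iy_j-v_jy_i)^2$), obtained from the row-orthonormality of $U$; this lets you express everything in terms of just the two columns $v$ and $y$ rather than all four. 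In that language the paper's ``hard'' inequality $\gamma_{i,j}+\gamma_{k,l}\le 1$ becomes the triviality $\mu_{i,j}+\mu_{k,l}\ge 0$, and the normalization $\sum\gamma_{i,j}=2$ drops out automatically. What the paper's approach buys is that one never needs to find the convex combination; what yours buys is that one never needs the facet description, Jacobi, or Cauchy--Binet.

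One remark that could streamline your write-up: your ``crucial'' inequality $\mu_{i,j}\le\beta_i+\beta_j$, which you prove via the Lagrange/Cauchy--Schwarz bound, is in fact \emph{equivalent} to $\gamma_{k,l}\ge 0$ for the complementary pair $\{k,l\}=[4]\setminus\{i,j\}$. Indeed, using $\sum_m\alpha_m=\sum_m\beta_m=\sum_{p<q}\mu_{p,q}=1$ and the row-sum identity, one finds $\mu_{k,l}=1+\mu_{i,j}-\alpha_i-\alpha_j-\beta_i-\beta_j$ and $\alpha_k+\alpha_l=1-\alpha_i-\alpha_j$, so $\gamma_{k,l}\ge 0$ rearranges exactly to $\mu_{i,j}\le\beta_i+\beta_j$. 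Thus every nontrivial cut condition in your LP reduces to the single obvious fact that each $\gamma_{i,j}$ is a sum of two squares; the Cauchy--Schwarz step, while perfectly valid, is not an independent obstacle.
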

To see that  \Cref{lmma:convex_hull} implies the theorem, observe that for any $i,j,k$, 
\[
a_{i,j,k}(\lambda) \ge \max_{\pi \in S_4} p(\lambda_{\pi(1)},\lambda_{\pi(2)} ,\lambda_{\pi(3)} ,\lambda_{\pi(4)}).
\]
Hence, in particular, any convex combination of the $a_{i,j,k}$ will be lower bounded by this same quantity.

Therefore, since every symmetric matrix is diagonalizable, we have that for any symmetric matrix $X$, and any orthogonal $U$,
\[
    P(U^{\intercal}XU) \ge \max_{\pi \in S_4} p(\lambda_{\pi(1)},\lambda_{\pi(2)} ,\lambda_{\pi(3)} ,\lambda_{\pi(4)}).
\]
The opposite inequality is easy to see by choosing $U$ to be an orthogonal matrix diagonalizing $X$.
\end{proof}

It remains to show \Cref{lmma:convex_hull}.

\begin{proof}[Proof of \Cref{lmma:convex_hull}]

Since we work in dimension $4$, we can find the inequalities defining this convex hull explicitly using computational methods.
It turns out that the polynomial $\gamma$ is in the convex hull of the $a_{i,j,k}$ if and only if for each $i,j \in [n]$,
\[
\gamma_{i,j} \ge 0 
\]
and if $\{i,j,k,l\} = \{1,2,3,4\}$, then
\[
\gamma_{i,j} + \gamma_{k,l} \le 1
\]
and
\[
\sum_{i,j \in \binom{[n]}{2}} \gamma_{i,j}  = 2
\]

For our particular value of $\gamma_{i,j}$, it is easy to see that it is the sum of two squares and hence nonnegative. Further notice that the sum of all of the coefficients of $\gamma$ is $\gamma(1,1,1,1)$, so that returning to the definition of the polynomial $\gamma$,
\[
    \gamma(1,1,1,1) = P(I) = 2.
\]

It remains to show that 
\[
\gamma_{i,j} + \gamma_{k,l} \le 1
\]
Notice that
\[
\gamma_{i,j} + \gamma_{k,l} = 
   \det \begin{pmatrix}
    v_i & w_i\\
    v_j & w_j\\
    \end{pmatrix}^2
+ \det
    \begin{pmatrix}
    v_i & z_i\\
    v_j & z_j\\
    \end{pmatrix}^2
+ \begin{pmatrix}
    v_k & w_k\\
    v_l & w_l\\
    \end{pmatrix}^2
+ \det
    \begin{pmatrix}
    v_k & z_k\\
    v_l & z_l\\
    \end{pmatrix}^2
\]
We prove that this is at most 1. Recall that
\[
U = 
\begin{pmatrix}
v & w & z & y
\end{pmatrix}
=
\begin{pmatrix}
v_1 & w_1 & z_1 & y_1\\
v_2 & w_2 & z_2 & y_2\\
v_3 & w_3 & z_3 & y_3\\
v_4 & w_4 & z_4 & y_4\\
\end{pmatrix}.
\]
The Jacobi complementary minors theorem for matrix inverses implies that if $S \subseteq [4]$, then
\[
    \det(U|_{S, T}) = \det(U)\det(U^{-1}|_{S^c, T^c}) = \pm\det(U^{\intercal}|_{S^c, T^c}) = \pm\det(U^{\intercal}|_{T^c, S^c})
\]
We now see that 
\[
\det
\begin{pmatrix}
v_i & w_i\\
v_j & w_k
\end{pmatrix}^2=
\det
\begin{pmatrix}
z_k & y_k\\
z_l & y_l
\end{pmatrix}^2.
\]
Similarly, we must have
\[
\det
\begin{pmatrix}
v_i & z_i\\
v_j & z_j
\end{pmatrix}^2=
\det
\begin{pmatrix}
w_k & y_k\\
w_l & y_l
\end{pmatrix}^2.
\]
Let
\[
M =
\begin{pmatrix}
v_3 & w_3 & z_3 & y_3\\
v_4 & w_4 & z_4 & y_4\\
\end{pmatrix}.
\]
Notice that since $U$ is orthogonal, these two rows are orthogonal, and so $M
M^{\intercal}= I$.  Applying the Cauchy--Binet theorem to
$\det(MM^{\intercal}) = \det(I)$ we see that
\begin{align*}
    1 &= \det(MM^{\intercal}) \\
      &= \sum_{S \subseteq \binom{[n]}{2}} \det(M_{S})\det(M_{S}^{\intercal})\\
      &= \sum_{S \subseteq \binom{[n]}{2}} \det(M_{S})^2\\
    & \ge \begin{pmatrix}
    z_k & y_k\\
    z_l & y_l
    \end{pmatrix}^2
    + \det \begin{pmatrix}
    v_i & w_i\\
    v_j & w_j\\
    \end{pmatrix}^2
+ \begin{pmatrix}
    v_k & w_k\\
    v_l & w_l\\
    \end{pmatrix}^2
+ \det
    \begin{pmatrix}
    v_k & z_k\\
    v_l & z_l\\
    \end{pmatrix}^2\\
    &= \gamma_{i,j} + \gamma_{k,l}  \, .
\end{align*}
The result now follows.
\end{proof}

\section{The permutation property}
The goal of this section is to prove \Cref{thm:permutation}. It says that given any point $v$ in the hyperbolicity cone of $e_k$ and any other homogeneous stable multiaffine polynomial $h$ of the same degree, some permuation of the coordinates of $v$ is in the hyperbolicity cone of $h$. We call this remarkable propery of $e_k$ the \emph{permutation property}. We first need some preparation.

\begin{lemma}
 Assume that the homogeneous stable polynomials $g,h\in\R[x_1,\ldots,x_n]$ have nonnegative coefficients and a common interlacer. Then $f=g+h$ is stable. If $v$ is in the hyperbolicity cone of $f$, then $v$ is in the hyperbolicity cone of $g$ or in the hyperbolicity cone of $h$.
\end{lemma}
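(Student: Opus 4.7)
My plan is to prove both assertions by restricting to a positive line direction and reducing to univariate statements about polynomials with a common interlacer. For the stability of $f = g + h$: since $f$ is homogeneous with nonnegative coefficients, it suffices to show that $f$ is hyperbolic with respect to every $a \in \R^n_{>0}$. Fix such an $a$ and any $v \in \R^n$, and let $r$ be a common interlacer of $g$ and $h$. By the multivariate Hermite--Biehler theorem (as invoked in \Cref{lem:interlacingpreservers}), $r + ig$ and $r + ih$ are stable; substituting $z_j = v_j + t a_j$ (so $\operatorname{Im} z_j = a_j \operatorname{Im} t$) shows that the univariate polynomial $r(v + ta)$ is a common interlacer of $g(v+ta)$ and $h(v+ta)$. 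Obreshkoff's theorem for sums with a common interlacer then yields that $f(v+ta) = g(v+ta) + h(v+ta)$ is real-rooted, so $f$ is hyperbolic with respect to $a$; since $a \in \R^n_{>0}$ was arbitrary, $f$ is stable.

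For the cone inclusion $H(f) \subseteq H(g) \cup H(h)$ I first treat $v \in \operatorname{int} H(f)$, then extend by perturbation. Set $\phi(s) := g(v + s\mathbf{1})$, $\psi(s) := h(v + s\mathbf{1})$, $\rho(s) := r(v + s\mathbf{1})$; these are real-rooted with positive leading coefficient, and $\rho$ is a common univariate interlacer of $\phi$ and $\psi$ by the same line-restriction argument. The interior hypothesis says that all roots of $\phi + \psi$ are strictly negative, and in particular $f(v) > 0$. Assume for contradiction that both $v \notin H(g)$ and $v \notin H(h)$, so $\phi$ and $\psi$ each have at least one positive root; let $k, l \geq 1$ be the numbers of positive roots of $\phi, \psi$ and let $m$ be that of $\rho$. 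A standard sign-alternation argument (using that $\phi(\rho_i)$ and $\psi(\rho_i)$ share a sign by the common interlacing convention inherited from $r + ig$ and $r + ih$ both being stable) places a root of $\phi + \psi$ inside each interval $(\rho_{i-1}, \rho_i)$. Counting those intervals lying entirely in $(0, \infty)$ yields at least $\max(k, l) - 1$ strictly positive roots of $\phi + \psi$. If $\max(k, l) \geq 2$ this directly contradicts the interior hypothesis.

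In the remaining case $k = l = 1$, all roots $\phi_i, \psi_i$ with $i < d$ are $\leq 0$, and the factorization $\phi(0) = c_\phi (-\phi_d) \prod_{i < d}(-\phi_i)$ has its single strictly negative factor $-\phi_d$ multiplied by nonnegative factors, so $\phi(0) \leq 0$; likewise $\psi(0) \leq 0$. Hence $f(v) = \phi(0) + \psi(0) \leq 0$, contradicting $f(v) > 0$. Finally, for $v \in \partial H(f)$, I perturb by setting $v_\epsilon := v + \epsilon \mathbf{1} \in \operatorname{int} H(f)$ for $\epsilon > 0$; by the interior case each $v_\epsilon \in H(g) \cup H(h)$, and extracting a subsequence $\epsilon_n \downarrow 0$ along which (say) $v_{\epsilon_n} \in H(g)$ gives $v \in H(g)$ by closedness of $H(g)$. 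The main obstacle will be packaging the sign-alternation argument cleanly enough to handle both interlacing conventions and both regimes ($\max(k, l) \geq 2$ versus $k = l = 1$) uniformly; the rest is a soft reduction to classical univariate interlacing theory.
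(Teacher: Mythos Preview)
Your argument is correct and follows the same core strategy as the paper: restrict to the line $v+t\mathbf{1}$, pass to the univariate polynomials $\phi,\psi,\rho$, and exploit the common interlacer. The main difference is in execution. The paper's proof is considerably shorter because it uses one observation you do not make explicit: since $\rho$ is a common interlacer of $\phi$ and $\psi$, it also interlaces $\phi+\psi$; but all roots of $\phi+\psi$ are nonnegative (as $v\in H(f)$), so all roots of $\rho$ are nonnegative, and hence each of $\phi,\psi$ can have \emph{at most one} negative root. From there a single sign check at the smallest root of $\phi+\psi$ finishes the argument, with no case split and no need to treat the boundary of $H(f)$ separately by perturbation.

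Your route---counting positive roots of $\phi+\psi$ directly via sign alternation at the roots of $\rho$, then handling the residual case $k=l=1$ by evaluating at $0$, and finally closing the boundary case by a limiting argument---arrives at the same place but with more bookkeeping. The upside of your version is that you prove the stability of $f=g+h$ explicitly (via Hermite--Biehler and Obreshkoff), whereas the paper's proof essentially takes this for granted. So: same approach, but the paper's packaging via ``the interlacer inherits nonnegativity of roots from $F$'' is the cleaner shortcut worth adopting.
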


\begin{proof}
 Let $e$ be the all-ones vector.
  The univariate polynomials $F=f(te-v), G=g(te-v)$ and $H=h(te-v)$ have a common interlacer. Further, all roots of $F$ are nonnegative. The existence of a common interlacer implies that $G$ and $H$ have at most one negative root each. Assume for the sake of a contradiction that both $G$ and $H$ have a negative root. Then $G$ and $H$ have the same (nonzero) sign on the smallest root of $F$. This contradicts $F=G+H$. Thus either $G$ or $H$ have only nonnegative roots which implies the claim.
\end{proof}

\begin{lemma}
 Let $h\in\R[x_1,\ldots,x_n]$ be homogeneous, multiaffine and stable. Let $\tau\in\fS_n$ be a transposition. Then $h$ and $\tau(h)$ have a common interlacer.
\end{lemma}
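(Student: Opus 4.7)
The plan is to establish the equivalent condition that every nonnegative combination $\lambda h + \mu\tau(h)$ is stable. By the standard multivariate analogue of the Obreschkoff/Hermite--Biehler characterization of polynomials with a common interlacer (the criterion used implicitly in the preceding lemma via restriction to lines), this is precisely what is needed.

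I would recast the problem as a stability-preserver statement: the linear operator
\[
    T_{\lambda,\mu}:p\mapsto \lambda p + \mu\tau(p)
\]
on multiaffine polynomials in $x_1,\ldots,x_n$ preserves stability. By the Borcea--Br\"and\'en symbol theorem (\cite[Thm.~1.3]{weyl}, already invoked in \Cref{lem:interlacingpreservers}), this is equivalent to stability of the symbol $T_{\lambda,\mu}\!\left[\prod_{k}(x_k+y_k)\right]\in\R[\x,\mathbf{y}]$. Since $\tau$ only swaps the pair $x_i,x_j$, a direct computation shows that the symbol factors as
\[
    \prod_{k\neq i,j}(x_k+y_k)\cdot Q(x_i,y_i,x_j,y_j),
\]
where, writing $a=x_i, b=x_j, c=y_i, d=y_j$,
\[
    Q(a,b,c,d)\;=\;\lambda(a+c)(b+d)+\mu(a+d)(b+c).
\]
The prefactor is a product of stable linear forms, so the problem reduces to showing $Q$ is stable in four variables.

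The main step, and the main obstacle, is this stability claim for $Q$. I would argue by contradiction. Assume $\lambda,\mu>0$ (the cases $\lambda\mu=0$ are immediate, as $Q$ reduces to a single product of stable linear forms), and suppose $Q(a,b,c,d)=0$ for some $a,b,c,d$ in the open upper half-plane. None of the linear forms $a+c,b+d,a+d,b+c$ can vanish---for instance $a+d=0$ forces $a=-d$, which lies in the open lower half-plane. Dividing then yields
\[
    \frac{(a+c)(b+d)}{(a+d)(b+c)} \;=\; -\frac{\mu}{\lambda}\;\in\;\R_{<0}.
\]
The left-hand side is exactly the M\"obius cross ratio of the four points $-a,-b,c,d\in\widehat{\C}$. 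A cross ratio is real exactly when the four points are concyclic; if this occurs, then since $-a,-b$ lie in the open lower half-plane and $c,d$ in the open upper half-plane, the common circle (or line) must cross the real axis, placing the pair $\{-a,-b\}$ on one arc and $\{c,d\}$ on the other. The two pairs therefore do not interleave in cyclic order, forcing the cross ratio to be strictly positive. Hence the cross ratio is either non-real or strictly positive, contradicting the negative value $-\mu/\lambda$.

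With $Q$ stable, the full symbol is stable, so $T_{\lambda,\mu}$ preserves stability by Borcea--Br\"and\'en, and $\lambda h+\mu\tau(h)$ is stable for every $\lambda,\mu\ge 0$. This is the desired common-interlacer property. The only nontrivial piece of the argument is recognizing the vanishing locus of $Q$ as a cross-ratio condition and then ruling out negative values by the cyclic-order observation; the rest is a routine application of the symbol calculus already used elsewhere in the paper.
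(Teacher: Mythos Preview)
Your argument is correct, but it is a much longer path than the paper takes. The paper simply exhibits a common interlacer explicitly: writing (without loss of generality) $\tau=(12)$ and $h=A\,x_1x_2+B\,x_1+C\,x_2+D$ with $A,B,C,D\in\R[x_3,\ldots,x_n]$, one computes
\[
\Bigl(\frac{\partial}{\partial x_1}+\frac{\partial}{\partial x_2}\Bigr)h \;=\; A(x_1+x_2)+B+C \;=\;\Bigl(\frac{\partial}{\partial x_1}+\frac{\partial}{\partial x_2}\Bigr)\tau(h).
\]
Since a directional derivative $D_{e_1+e_2}$ of a stable polynomial always interlaces it, this single polynomial interlaces both $h$ and $\tau(h)$. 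That is the entire proof.

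Your route---showing via the Borcea--Br\"and\'en symbol theorem that $p\mapsto\lambda p+\mu\tau(p)$ preserves stability, and settling the resulting four-variable stability by a cross-ratio separation argument---is sound and pleasant in its own right. It has the mild advantage of directly yielding the stability of $\lambda h+\mu\tau(h)$, which is exactly what the downstream \Cref{cor:transcone} needs; the paper gets there in two steps (common interlacer, then the preceding lemma). One small point to tighten: you invoke the equivalence ``all nonnegative combinations stable $\Longleftrightarrow$ common interlacer exists'' as standard. In the multivariate homogeneous setting this is true, but it is the univariate Obreschkoff/Dedieu statement applied line by line; a single \emph{global} interlacer is not produced by your argument. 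This does not matter for the application (the proof of the preceding lemma only uses univariate common interlacers along each line $t\mapsto te-v$), but if you want the lemma exactly as stated, it is worth either citing the multivariate version precisely or noting that the explicit polynomial $D_{e_i+e_j}h$ does the job.
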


\begin{proof}
  Without loss of generality assume that $\tau=(12)$ and let $g=\tau(h)$. We can write $$h=A\cdot x_1\cdot x_2+B\cdot x_1+C\cdot x_2+D$$for some multiaffine $A,B,C,D\in\R[x_3,\ldots,x_n]$. Then the polynomial $$\left(\frac{\partial}{\partial x_1}+\frac{\partial}{\partial x_2}\right)h=A\cdot(x_1+x_2)+B+C=\left(\frac{\partial}{\partial x_1}+\frac{\partial}{\partial x_2}\right)g$$is a common interlacer of $h$ and $g$.
\end{proof}

\begin{cor}\label{cor:transcone}
 Let $h\in\R[x_1,\ldots,x_n]$ be homogeneous, multiaffine and stable. Let $\tau\in\fS_n$ be a transposition, $g=\tau(h)$ and $f=\lambda g+\mu h$ for some nonnegative $\lambda, \mu\in\R$. Then $\textnormal{C}(f,e)\subset\textnormal{C}(g,e)\cup\textnormal{C}(h,e)$.
\end{cor}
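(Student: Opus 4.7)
The plan is to reduce directly to the two preceding lemmas. The only real content is verifying their hypotheses for the scaled sum $f = \lambda g + \mu h$.

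First, I would handle the degenerate cases $\lambda = 0$ or $\mu = 0$ separately: in either case $f$ is a nonnegative scalar multiple of $g$ or $h$, so $\textnormal{C}(f,e)$ equals one of the two cones on the right-hand side and the inclusion is immediate. So assume $\lambda, \mu > 0$.

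Next, I would argue that $\lambda g$ and $\mu h$ satisfy the hypotheses of the first preceding lemma. Since $h$ is a homogeneous, multiaffine, stable polynomial, by \cite[Thm.~6.1]{halfplane} all of its nonzero coefficients share the same sign; without loss of generality, assume they are nonnegative. Since $g = \tau(h)$ is obtained by permuting the variables of $h$, its nonzero coefficients are a relabeling of those of $h$ and are therefore also nonnegative. Multiplying by $\lambda \ge 0$ and $\mu \ge 0$ preserves nonnegativity of coefficients. By the second preceding lemma, $h$ and $g$ admit a common interlacer $I$; the same $I$ (or any positive multiple thereof) is a common interlacer of $\lambda g$ and $\mu h$, since scaling a univariate polynomial by a positive constant does not change its root locations, hence does not affect interlacing.

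Now I would apply the first preceding lemma to the pair $(\lambda g, \mu h)$. This yields that $f = \lambda g + \mu h$ is stable, and that any $v \in \textnormal{C}(f,e)$ lies in $\textnormal{C}(\lambda g, e)$ or $\textnormal{C}(\mu h, e)$. Since hyperbolicity cones are invariant under multiplication by a positive scalar, we conclude that $\textnormal{C}(\lambda g, e) = \textnormal{C}(g, e)$ and $\textnormal{C}(\mu h, e) = \textnormal{C}(h, e)$, giving the desired inclusion $\textnormal{C}(f,e) \subseteq \textnormal{C}(g,e) \cup \textnormal{C}(h,e)$.

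There is no real obstacle here; the work has all been done in the two lemmas just above, and this corollary is essentially the observation that their hypotheses are robust under positive scaling of the summands. The only subtlety is the WLOG reduction to nonnegative coefficients, which relies on the halfplane-property theorem cited earlier in the paper.
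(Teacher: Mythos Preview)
Your proof is correct and takes essentially the same approach as the paper, which simply states that the corollary is a direct consequence of the two preceding lemmas. You have merely filled in the routine verifications (degenerate cases, preservation of nonnegative coefficients and common interlacers under positive scaling) that the paper leaves implicit.
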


\begin{proof}
 This is a direct consequence of the two preceding lemmas.
\end{proof}

Let $\Q[\fS_n]$ be the group algebra of the symmetric group $\fS_n$ on $n$ elements, i.e. $\Q[\fS_n]$ is the vector space over $\Q$ with basis $e_g$ for $g\in\fS_n$ whose ring structure is defined by extending $e_g\cdot e_h:=e_{g\cdot h}$ linearly. In $\Q[\fS_n]$ we have the identity \begin{equation}\label{eq:factor}
    \prod_{j=2}^n\prod_{i=1}^{j-1} \left(1+\frac{1}{j-i}\cdot e_{(ij)}\right)=\sum_{g\in\fS_n}e_g,
\end{equation} see for example \cite[p.~192]{nazarov}. From this we obtain our desired theorem.

\begin{theorem}
 Let $\sigma_d\in\R[x_1,\ldots,x_n]$ be the elementary symmetric polynomial of degree $d$ and $h\in\R[x_1,\ldots,x_n]$ any other nonzero homogeneous multiaffine stable polynomial of degree $d$. If $v$ is in the hyperbolicity cone of $e_d$, then $\tau(v)$ is in the hyperbolicity cone of $h$ for some permutation $\tau\in\fS_n$.
\end{theorem}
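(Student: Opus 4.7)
The plan is to integrate the group algebra identity \eqref{eq:factor} into the polynomial ring one transposition at a time. First I would reduce to the case that $h$ has nonnegative coefficients, which is permissible by \cite[Thm.~6.1]{halfplane}. Then $\sum_{g\in\fS_n}g(h)$ is a symmetric, homogeneous, multiaffine polynomial of degree $d$, hence a scalar multiple of $e_d$; evaluating at $\mathbf{1}$ shows that scalar is $n!\cdot h(\mathbf{1})/\binom{n}{d}>0$.

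Fixing an order $\tau_1,\ldots,\tau_N$ (with $N=\binom{n}{2}$) of the transpositions appearing in \eqref{eq:factor}, together with their associated positive weights $\alpha_k=\frac{1}{j_k-i_k}$, I define $f_0:=h$ and $f_k:=f_{k-1}+\alpha_k\tau_k(f_{k-1})$ for $k=1,\ldots,N$. The identity \eqref{eq:factor} then yields $f_N=c\cdot e_d$ for some $c>0$, so $H(f_N)=H(e_d)$.

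The main inductive claim is that each $f_k$ is homogeneous of degree $d$, multiaffine, stable, and has nonnegative coefficients, and that
\[
    H(f_k) \ \subseteq\ \bigcup_{\sigma\in G_k}\sigma\bigl(H(h)\bigr)
\]
for the subset $G_k\subseteq\fS_n$ consisting of the support of the partial product of the first $k$ factors of \eqref{eq:factor} applied to $h$. The inductive step applies \Cref{cor:transcone} with $f_{k-1}$ in the role of $h$; all hypotheses (multiaffine, homogeneous, stable, nonnegative coefficients) are supplied by the inductive hypothesis. This gives $H(f_k)\subseteq H(f_{k-1})\cup H(\tau_k(f_{k-1}))=H(f_{k-1})\cup\tau_k(H(f_{k-1}))$, and combining with the inductive hypothesis delivers the containment for $k$. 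Since $G_N=\fS_n$, the case $k=N$ yields $H(e_d)=H(f_N)\subseteq\bigcup_{\sigma\in\fS_n}\sigma(H(h))$, so every $v\in H(e_d)$ admits $\sigma\in\fS_n$ with $\sigma^{-1}(v)\in H(h)$; the required permutation is $\tau:=\sigma^{-1}$.

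The main obstacle I expect is making the inductive step fully rigorous: \Cref{cor:transcone} is phrased for a distinguished polynomial $h$, but its proof uses only that $h$ be multiaffine, homogeneous, and stable with nonnegative coefficients. The common interlacer produced in the proof of the second preceding lemma has the form $(\partial_{i_k}+\partial_{j_k})f_{k-1}$, which is automatically invariant under $\tau_k=(i_k\,j_k)$ since $\partial_{i_k}+\partial_{j_k}$ is symmetric in these two variables, and it interlaces $f_{k-1}$ (and hence also $\tau_k(f_{k-1})$) by the usual Renegar-derivative argument. I would therefore make this slight strengthening of \Cref{cor:transcone} explicit before running the induction.
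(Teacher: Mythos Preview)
Your proposal is correct and follows essentially the same approach as the paper: both use the factorization \eqref{eq:factor} to build a chain of polynomials $h=f_0,f_1,\ldots,f_N=c\cdot e_d$ via $f_k=f_{k-1}+\alpha_k\tau_k(f_{k-1})$, and then apply \Cref{cor:transcone} at each step to propagate the hyperbolicity-cone containment. Your write-up is slightly more explicit than the paper's in verifying that each $f_k$ inherits the hypotheses (multiaffine, homogeneous, stable, nonnegative coefficients) needed to reapply \Cref{cor:transcone}, and in computing the scalar $c>0$, but the argument is the same.
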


\begin{proof}
 We have $c\cdot\sigma_d=(\sum_{g\in\fS_n}e_g)h$ for some nonzero scalar $c\in\R$. Thus by \Cref{eq:factor} we can write $$c\cdot\sigma_d=\left(\prod_{i=1}^r(1+\lambda_ie_{\tau_i})\right)h$$ for some positive $\lambda_i\in\R$, transpositions $\tau_i\in\fS_n$ and $r=\binom{n}{2}$. We define $h_k=\left(\prod_{i=1}^k(1+\lambda_ie_{\tau_i})\right)h$ for $k=0,\ldots,r$. Since $h_k=h_{k-1}+\lambda_k\tau_k(h_{k-1})$, \Cref{cor:transcone} implies that if $v$ is in the hyperbolicity cone of $h_k$, then either $v$ or $\tau_k(v)$ is in the hyperbolicity cone of $h_{k-1}$. Since $h_r=c\cdot\sigma_d$ and $h_0=h$, this argument shows that if $v$ is in the hyperbolicity cone of $\sigma_d$, then $(\tau_{i_1}\circ\cdots\circ\tau_{i_s})(v)$ is in the hyperbolicity cone of $h$ for some $1\leq i_1<\cdots<i_s\leq r$.
\end{proof}
\section{Open problems}\label{sec:openproblems}
Our work sparks a wide range of open problems. We mention some of them here.  For several of these problems, we presented proofs for some special cases, whereas the general case remains open. 

\subsection{Hyperbolic Schur-Horn Theorem}

In \Cref{sec:hadamard-fischer} we proved the hyperbolic generalization of Hadamard-Fischer inequality as well as Koteljanskii's inequality, in \Cref{thm:hadamard-fischer} and \Cref{thm:kota}. Here we present another potential generalization of classical linear algebra results in Schur-Horn theorem. 

Schur-Horn theorem appears in our previous section on Spectral containment Property, where it plays a major role and a generalized version called Schur-Horn property was formed. Here we will form a different generalization of Schur-Horn theorem in terms of hyperbolic polynomials. 

We will formulate our generalization in the language of majorization. Given polynomials $p$ and $q$ of the same degree, both hyperbolic with respect to the direction $v$, we say that $p$ majorizes $q$ in direction $v$ if for all $x \in \R^n$, the roots of $p(x-tv)$ majorize the roots of $q(x-tv)$. Recall that given $\alpha,\beta\in\R^k$, $\alpha$ majorizes $\beta$ if $\sum_{i=1}^k \alpha_i=\sum_{i=1}^k \beta_i$ and the following holds: let $\alpha',\beta'$ be obtained from $\alpha,\beta$ by reordering coordinates such that $\alpha'_1\ge...\ge\alpha'_k$ and $\beta'_1\ge...\ge\beta'_k$, then for each $1\le m<k,\sum_{i=1}^m \alpha'_i\ge\sum_{i=1}^m \beta'_i$. Equivalently, $\alpha$ majorizes $\beta$ if and only if $\beta\in\conv(\fS_k(\alpha))$, where the symmetric group $\fS_k$ acts on $\alpha$ by permuting its coordinates.


In this language, we can restate the Schur direction of the Schur-Horn theorem as follows:
\begin{lemma}(Schur)
	$\det(X)$ majorizes $\det(\diag(X))$ in the identity direction.
\end{lemma}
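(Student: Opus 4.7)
The plan is to unwind the definition of polynomial majorization in the identity direction and reduce to the classical Schur theorem, which is essentially the content being restated.

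First I would fix an arbitrary symmetric matrix $X \in \Sym$. The univariate polynomial $\det(X - tI) \in \R[t]$ factors as $\prod_{i=1}^{n}(\lambda_i(X) - t)$, so its roots (in $t$) are precisely the eigenvalues $\lambda_1(X), \ldots, \lambda_n(X)$ of $X$. Similarly, since $\det(\diag(Y)) = Y_{11} Y_{22} \cdots Y_{nn}$, the polynomial
\[
    \det(\diag(X - tI)) \ = \ \prod_{i=1}^{n}(X_{ii} - t)
\]
has roots equal to the diagonal entries $X_{11}, \ldots, X_{nn}$. In particular, both $\det(X)$ and $\det(\diag(X))$ are hyperbolic with respect to the identity direction, so the definition of majorization in direction $I$ applies.

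By that definition, $\det(X)$ majorizes $\det(\diag(X))$ in the identity direction if and only if, for every $X \in \Sym$, the tuple $(\lambda_1(X), \ldots, \lambda_n(X))$ majorizes the tuple $(X_{11}, \ldots, X_{nn})$ in the usual sense. This is exactly the classical theorem of Schur asserting that the eigenvalue vector of a real symmetric matrix majorizes its diagonal vector (the ``Schur'' direction of the Schur--Horn theorem \cite{schur23}), which I would cite directly.

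I do not anticipate any substantive obstacle: the entire argument is a translation between two equivalent formulations, and the equality of coordinate sums required by majorization is immediate from $\sum_i \lambda_i(X) = \tr(X) = \sum_i X_{ii}$. The only care needed is verifying that the roots referenced in the definition are correctly identified with the eigenvalues and diagonal entries, which is transparent from the two factorizations above.
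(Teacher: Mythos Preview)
Your proposal is correct and matches the paper's treatment exactly: the paper presents this lemma as a direct restatement of the Schur direction of the Schur--Horn theorem, with no separate proof, and your argument simply spells out why the definition of majorization in the identity direction unpacks to the classical statement that the eigenvalue vector of a symmetric matrix majorizes its diagonal.
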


We conjecture that this holds for all homogeneous PSD-stable lpm-polynomials. 

\begin{conj}
	Let $P$ be a homogeneous PSD-stable lpm-polynomial. Then $P(X)$ majorizes $P(\diag(X))$ in the identity direction. 
\end{conj}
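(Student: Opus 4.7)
The plan is to reduce the conjecture to a family of inequalities between elementary symmetric functions of the two root vectors, which follow from the hyperbolic Fischer--Hadamard inequality (\Cref{thm:hadamard-fischer}) applied to iterated Renegar derivatives, and then to attempt to upgrade these inequalities to full majorization. Let $\mu=(\mu_1,\dots,\mu_d)$ and $\nu=(\nu_1,\dots,\nu_d)$ denote the roots in $t$ of $P(X-tI)$ and $P(\diag(X)-tI)$ respectively. After normalizing so that $P(I)>0$, the factorization $P(X-tI)=P(I)\prod_i(\mu_i-t)$ yields the identification $\frac{1}{(d-m)!}D_I^{d-m}P(X)=P(I)\cdot e_m(\mu)$ for each $m=1,\dots,d$, and similarly with $\nu$ in place of $\mu$ when we substitute $\diag(X)$ for $X$.

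Each Renegar derivative $D_I^k P$ is again an lpm polynomial, since $\partial_{X_{ii}}$ sends a principal minor to a smaller principal minor or to $0$. By \Cref{thm:Renegar} we have $H(D_I^k P)\supseteq H(P)\supseteq\mathrm{PSD}$, so every positive definite matrix lies in the interior of $H(D_I^k P)$; combined with \Cref{lem:hyp2} this shows $D_I^k P$ is hyperbolic with respect to every positive definite matrix, hence PSD-stable by \Cref{lem:psd_stable}. Applying \Cref{thm:hadamard-fischer} with $\Pi=\{\{1\},\dots,\{n\}\}$ to each $D_I^{d-m}P/(d-m)!$ and using the identification above yields
\begin{equation*}
    e_m(\mu)\ \leq\ e_m(\nu), \qquad m=1,\dots,d.
\end{equation*}
A direct computation of the $t^{d-1}$-coefficient of $P(X-tI)=\sum_{|J|=d}c_J\det(X_J-tI_J)$ shows that it equals $(-1)^{d-1}\sum_i X_{ii}\sum_{J\ni i}c_J$, depending only on the diagonal entries of $X$; this upgrades the $m=1$ inequality to the equality $e_1(\mu)=e_1(\nu)$. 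Both root vectors lie in $\R^d_{\geq 0}$ because $X,\diag(X)\in H(P)$ (the latter by \Cref{lem:lpm_projection}).

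The main obstacle is upgrading these symmetric-function inequalities to the majorization $\mu\succ\nu$. Unfortunately, the conditions ``$e_1(\mu)=e_1(\nu)$ and $e_k(\mu)\leq e_k(\nu)$ for $k\geq 2$'' do not imply $\mu\succ\nu$ even for nonnegative vectors of length four: for instance, $\mu=(1,1,0,0)$ and $\nu=(6/5,\,4/15,\,4/15,\,4/15)$ satisfy all these conditions yet $\mu_{\max}=1<6/5=\nu_{\max}$, so $\mu\not\succ\nu$. Hence the Fischer--Hadamard machinery alone cannot close the argument, and a genuinely new ingredient is required.

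I would try two strategies to bridge this gap. The first is a deformation argument: parameterize $X_s=(1-s)\diag(X)+sX\in H(P)$ for $s\in[0,1]$ and use first-order perturbation analysis of the roots of $P(X_s-tI)$ to show that the root vector $\mu(s)$ is nondecreasing in the majorization order along the path. The second is a Birkhoff--von Neumann style argument: try to express $\diag(X)$ as a convex combination of matrices whose hyperbolic root vectors are permutations of $\mu$, and then invoke convexity of the top-$k$ hyperbolic eigenvalue sums. The essential difficulty not present in the classical Schur--Horn setting is that a generic lpm polynomial is not permutation-symmetric, so no natural group acts on its hyperbolic spectrum, and new ideas exploiting the specific lpm structure will likely be needed.
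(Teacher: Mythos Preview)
The statement you are attempting to prove is stated in the paper as an open \emph{conjecture}; there is no proof to compare against. The paper only establishes the special case $P(X)=E_k(D^{-1/2}XD^{-1/2})$ for positive diagonal $D$, by reducing to the classical Schur theorem and then taking iterated Renegar derivatives in the $D$-direction.

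Your partial results are correct. Each $D_I^{d-m}P$ is again a PSD-stable lpm polynomial, and the hyperbolic Fischer--Hadamard inequality applied to these derivatives indeed yields $e_m(\mu)\leq e_m(\nu)$ for all $m$, with equality at $m=1$. (One small omission: the conjecture is for arbitrary symmetric $X$, whereas you tacitly assume $X\in H(P)$; a shift $X\mapsto X+cI$ for large $c$ reduces the general case to this one, since majorization is translation-invariant.) Your counterexample $\mu=(1,1,0,0)$, $\nu=(6/5,4/15,4/15,4/15)$ showing that these symmetric-function inequalities alone do not force majorization is valid, and your conclusion that the Fischer--Hadamard machinery cannot close the gap is exactly right---consistent with the paper leaving the statement open. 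Neither of your two proposed strategies is pursued in the paper, and the conjecture remains unresolved there.
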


Recall for $1\le k\le n$ we defined $E_k(X)=\sum_{|S|=k}\det X_S$ to be the minor lift of degree $k$ elementary symmetric polynomial, i.e., sum of all $k\times k$ principal minors of $X$. We are able to prove this conjecture for rescalings of $E_k$. Our proof will use the following result from \cite{BB10majorizationpreserver}, which follows from Theorem 1 of their paper. 

\begin{prop}\label{prop:derivative-maj}
	Suppose $p$ majorizes $q$ in direction $v$, then $D_v p$ majorizes $D_v q$, where $D_v$ denotes the directional derivatives in the $v$ direction.
\end{prop}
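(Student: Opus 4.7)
The plan is to reduce the statement to a univariate majorization-of-roots fact, which is precisely the content of Theorem~1 of Borcea--Brändén~\cite{BB10majorizationpreserver}.

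First, I would unpack the definition. Saying ``$p$ majorizes $q$ in direction $v$'' means that for each fixed $x \in \R^n$, the real-rooted univariate polynomials $f(t) := p(x-tv)$ and $g(t) := q(x-tv)$ have the property that the sorted root vector of $f$ majorizes that of $g$. I want to show the analogous statement for $D_v p$ and $D_v q$. By the chain rule, $(D_v p)(x - tv) = -f'(t)$ and $(D_v q)(x - tv) = -g'(t)$, so the problem reduces to the following univariate claim: \emph{for real-rooted polynomials $f, g \in \R[t]$ of the same degree, if the roots of $f$ majorize those of $g$, then the roots of $f'$ majorize those of $g'$}. Note that $f$ has degree $\deg p$ because $p$ is hyperbolic with respect to $v$ and hence $p(v)\neq 0$, so degrees match on both sides.

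The easy half of this univariate claim is the equality of sums. If $f = c_n \prod_i(t - \alpha_i)$, then comparing the coefficients of $t^{n-1}$ and $t^{n-2}$ in $f'$ shows that the sum of roots of $f'$ equals $\tfrac{n-1}{n}\sum_i \alpha_i$; the same formula holds for $g$ and $g'$. Since majorization demands $\sum \alpha_i = \sum \gamma_i$, this trace condition transfers immediately from $(f,g)$ to $(f',g')$.

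The main obstacle is the chain of partial-sum inequalities for the sorted roots of $f'$ and $g'$. For this step I would directly invoke Theorem~1 of \cite{BB10majorizationpreserver}, which classifies the linear operators on polynomials that preserve majorization of roots and in particular shows that $d/dt$ has this property on real-rooted polynomials of a fixed degree. A self-contained alternative would be to use the Hardy--Littlewood--Pólya characterization of majorization to reduce the problem to a single Robin-Hood transfer among the roots of $f$, and then to track the corresponding shift in the critical points of $f$ via implicit differentiation of the identity $\sum_i (\beta - \alpha_i)^{-1} = 0$ defining each root $\beta$ of $f'$; one then verifies that the ordered partial sums of these critical points are weakly Schur-concave functions of $\alpha$. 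Either route isolates the technical heart of the proof in the univariate setting, which is exactly what the BB10 reference packages cleanly.
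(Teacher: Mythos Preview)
Your proposal is correct and matches the paper's own treatment: the paper does not give an independent proof of this proposition but simply records that it follows from Theorem~1 of \cite{BB10majorizationpreserver}, which is exactly the reference you invoke after reducing (via the chain rule) to the univariate statement that differentiation preserves majorization of roots. Your write-up actually supplies more detail than the paper does, spelling out the reduction and the trace identity explicitly.
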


Now we are ready to state and prove our result. 

\begin{prop}
	Let $D$ be any positive diagonal matrix, and $P(X)=E_k(D^{-1/2}XD^{-1/2})$. Then $P(X)$ majorizes $P(\diag(X))$ in the identity direction. 
\end{prop}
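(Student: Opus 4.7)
My plan is to express $P$ as an iterated directional derivative of $\det$ and then reduce the statement to the classical Schur--Horn theorem via Proposition~\ref{prop:derivative-maj}. The crucial algebraic identity, provable by induction on $n-k$, is
\[
 D_D^{n-k}\det(X) \;=\; (n-k)!\,\det(D)\,P(X).
\]
Indeed, $D_D=\sum_i D_{ii}\partial_{X_{ii}}$ acts only on diagonal entries with weights $D_{ii}$, and $\partial_{X_{ii}}\det(X_S)=\det(X_{S\setminus i})$ for $i\in S$. After $n-k$ iterations, each principal $k$-minor $\det(X_S)$ accumulates the weight $(n-k)!\prod_{i\notin S}D_{ii}=(n-k)!\det(D)/\prod_{i\in S}D_{ii}$, which rearranges into $(n-k)!\,\det(D)\,P(X)$. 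Because $D_D$ only touches diagonal entries, the same identity holds when $X$ is replaced by $\diag(X)$: $D_D^{n-k}\det(\diag X)=(n-k)!\,\det(D)\,P(\diag X)$.

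Next, the base case is Schur's direction of the Schur--Horn theorem (stated as a lemma above): $\det(X)$ majorizes $\det(\diag X)$ in the identity direction, because the roots in $t$ of $\det(X-tI)=0$ are the eigenvalues of $X$, the roots of $\det(\diag X-tI)=0$ are the diagonal entries $X_{ii}$, and eigenvalues majorize diagonal entries.

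Finally, one iterates Proposition~\ref{prop:derivative-maj} (or, more precisely, its underlying generalization Theorem~1 of [BB10]) $n-k$ times, applying $D_D$ to both sides of the majorization. The output is that $D_D^{n-k}\det(X)$ majorizes $D_D^{n-k}\det(\diag X)$ in the identity direction. Dividing by the positive constant $(n-k)!\,\det(D)$ and invoking the identity above gives the desired conclusion.

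The main obstacle is the last step. Proposition~\ref{prop:derivative-maj} as literally stated preserves majorization only when the differentiation direction coincides with the majorization direction, whereas here I wish to differentiate in direction $D$ while keeping majorization in direction $I$. To justify this, I would invoke the full strength of Theorem~1 of [BB10], which allows the differentiation direction to be any vector in the hyperbolicity cone (both $I$ and $D$ lie in the interior of the hyperbolicity cone of $\det$). An alternative route, if one only has Proposition~\ref{prop:derivative-maj} as stated, is to first establish via Schur applied to $D^{-1/2}XD^{-1/2}$ that $\det$ majorizes $\det\circ\diag$ in direction $D$, iterate in the same direction $D$ to obtain majorization of $P$ over $P\circ\diag$ in direction $D$, and then argue separately (using the specific weighted-minor structure of $P$) that this converts to majorization in direction $I$.
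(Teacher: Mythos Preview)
Your algebraic identity $D_D^{\,n-k}\det(X)=(n-k)!\,\det(D)\,P(X)$ is correct and is exactly the expression the paper uses (written there as $p^{(k)}(X)=\sum_{|S|=k}\det(X_S)\prod_{i\notin S}D_{ii}$, suppressing the factorial). The overall architecture of your argument is also the paper's.

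The difference lies precisely in the obstacle you flag. Your primary route---start from Schur in direction $I$ and then differentiate $n-k$ times in direction $D$---requires $D_D$ to preserve majorization taken in a direction \emph{different} from $D$. Proposition~\ref{prop:derivative-maj} as stated does not give this, and your appeal to ``the full strength of Theorem~1 of [BB10]'' is not substantiated here; the paper never claims or uses such a strengthening. Instead the paper follows exactly your ``alternative route'': apply Schur to $D^{-1/2}XD^{-1/2}$ to obtain that $\det$ majorizes $\det\circ\diag$ \emph{in direction $D$}, and then apply Proposition~\ref{prop:derivative-maj} $(n-k)$ times with $v=D$, so that the differentiation direction matches the majorization direction and the Proposition applies as written.

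The conversion step you describe only as ``argue separately (using the specific weighted-minor structure of $P$)'' is the one thing you did not carry out, and it is a one-line computation in the paper:
\[
p^{(k)}(X-tD)=\sum_{|S|=k}\det\bigl((X-tD)_S\bigr)\prod_{i\notin S}D_{ii}
=\det(D)\sum_{|S|=k}\det\bigl(D_S^{-1/2}X_SD_S^{-1/2}-tI_S\bigr)
=\det(D)\,E_k\bigl(D^{-1/2}XD^{-1/2}-tI\bigr),
\]
and likewise for $q^{(k)}$. Thus majorization of $p^{(k)}$ over $q^{(k)}$ in direction $D$ is literally the same statement as majorization of the $E_k$-roots in direction $I$. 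You had the right fallback plan; you just stopped one step short of executing it, and that missing calculation is what replaces the unproved extension of Proposition~\ref{prop:derivative-maj}.
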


\begin{proof}
	First notice that $\det(X)$ majorizes $\det(\diag(X))$ in the $D$ direction, i.e., roots of $\det(X-tD)$ majorize roots of $\det(\diag(X)-tD)$ for any $X$. This follows from applying original Schur's theorem to the symmetric matrix $D^{-1/2}XD^{-1/2}$, since we have $\det(X-tD)=\det(D)\det(D^{-1/2}XD^{-1/2}-tI)$ and similarly $\det(\diag(X)-tD)=\det(D)\det(D^{-1/2}\diag(X)D^{-1/2}-tI)$. Also notice that $D^{-1/2}\diag(X)D^{-1/2}=\diag(D^{-1/2}XD^{-1/2})$. 
	
	Now we apply \Cref{prop:derivative-maj} $(n-k)$ times, where $p=\det(X),q=\det(\diag(X)),v=D$. This shows that $p^{(k)}(X)=\sum_{|S|=k}\det(X_S)\prod_{i\notin S}D_{ii}$ majorizes $q^{(k)}(X)=\sum_{|S|=k}\det(\diag(X)_S)\prod_{i\notin S}D_{ii}$ in the $D$ direction. Computing $p^{(k)}(X-tD)$ we have
	\begin{align*}
	p^{(k)}(X-tD)&=\sum_{|S|=k}\det(X_S-t D_S)\prod_{i\notin S}D_{ii}\\
	&=\sum_{|S|=k}\det (D_S)\det(D_S^{-1/2}X_S D_S^{-1/2}-t I_S)\prod_{i\notin S}D_{ii}\\
	&=\det(D)\sum_{|S|=k}\det(D_S^{-1/2}X_S D_S^{-1/2}-t I_S)\\
	&=\det(D)E_k(D^{-1/2}XD_{-1/2}-tI)
	\end{align*}
	
	Similarly, $q^{(k)}(X-tD)=\det(D)E_k(D^{-1/2}\diag(X)D_{-1/2}-tI)$. This shows that roots of $E_k(D^{-1/2}XD_{-1/2}-tI)$ majorize roots of $E_k(D^{-1/2}\diag(X)D_{-1/2}-tI)$. This completes the proof. 
\end{proof}

	We may also formulate a hyperbolic generalization of Horn's theorem, which we conjecture to be true but do not have any results.
	
	\begin{conj}
		Let $P$ be any degree $k$ lpm-polynomial. Let $\lambda,\mu\in \R^k$ such that $\lambda$ majorizes $\mu$. Then there exists a symmetric matrix $X$ such that roots of $P(X-tI)$ are given by $\lambda$, and roots of $P(\diag(X)-tI)$ are given by $\mu$.  
	\end{conj}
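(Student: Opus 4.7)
The plan is to mirror the classical proof of Horn's theorem and reduce the general case to a one-parameter ``T-transform'' deformation. By the Hardy--Littlewood--P\'olya / Muirhead lemma, whenever $\lambda$ majorizes $\mu$ one can obtain $\mu$ from $\lambda$ by a finite sequence of elementary averaging operations, each acting on only two coordinates via a doubly stochastic $2\times 2$ matrix. By induction on the length of this sequence, it suffices to treat the ``adjacent'' case where $\mu$ differs from $\lambda$ in only two coordinates $i,j$, with $\mu_i = (1-\theta)\lambda_i + \theta\lambda_j$ and $\mu_j = \theta\lambda_i + (1-\theta)\lambda_j$ for some $\theta\in[0,1]$.

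For this base case I would try the matrix $X(\delta)$ obtained from $\Diag(\mu)$ by placing $\delta$ in the off-diagonal positions $(i,j)$ and $(j,i)$. By construction $\diag(X(\delta)) = \mu$ for every $\delta$, so the roots of $P(\diag(X(\delta))-tI)$ are identically $\mu$. The task reduces to choosing $\delta$ so that the roots of $P(X(\delta)-tI)$ equal $\lambda$. At $\delta=0$ the root vector is $\mu$; as $|\delta|$ grows, the off-diagonal perturbation should move the root vector of $P(X(\delta)-tI)$ in the majorization order toward more spread-out vectors. In the case $P=\det$ this is exactly the standard $2\times 2$ construction underlying the classical Horn theorem, and $X(\delta)$ realizes every eigenvalue pair along the relevant segment. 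A continuity / intermediate-value argument, together with the Schur direction (the preceding conjecture), would then produce the desired $\delta$ provided the root map has a large enough image.

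The main obstacle is precisely the surjectivity of the map $\delta\mapsto \text{roots of } P(X(\delta)-tI)$. For an arbitrary homogeneous PSD-stable lpm polynomial $P$, a single off-diagonal parameter may fail to have enough freedom to traverse every admissible root vector, because $P$ is not invariant under orthogonal conjugation and depends on more of the matrix than just the $(i,j)$-block. A natural remedy is to enlarge the ansatz by letting off-diagonal entries $X_{ab}$ outside $\{i,j\}$ vary as auxiliary parameters, using the extra dimensions to sweep out the full fiber of the diagonal map. Combined with the Schur direction, which sandwiches the achievable set of root vectors from above inside the permutohedron $\conv(\fS_k\cdot\lambda)$, one would then want to show that the attainable set is both $\fS_k$-invariant (here \Cref{thm:permutation} applied to the univariate polynomial $P(X-tI)$ looks useful) and convex. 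These two properties together with the Schur bound would force equality with the full permutohedron. Producing a construction that does not rely on the orthogonal invariance of $\det$, or instead establishing this convexity directly, is where new ideas beyond the classical Horn argument seem to be required.
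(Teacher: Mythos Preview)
The statement you attempted is not a theorem in the paper but an open conjecture: the authors explicitly write that they ``conjecture [it] to be true but do not have any results.'' So there is no proof in the paper to compare against, and your proposal should be read as a research sketch rather than a purported proof. You yourself flag the argument as incomplete, which is accurate.

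Beyond the obstacles you already identify, there is a more basic structural gap. In the conjecture $\lambda,\mu\in\R^k$ are vectors of \emph{roots} of a degree-$k$ polynomial, while $X$ is an $n\times n$ symmetric matrix with $n\geq k$. Your base case sets $X(0)=\Diag(\mu)$ and asserts that ``at $\delta=0$ the root vector is $\mu$,'' but $\Diag(\mu)$ is not even an $n\times n$ matrix when $k<n$, and for a diagonal matrix $\Diag(d)$ with $d\in\R^n$ the roots of $P(\Diag(d)-tI)=p(d-t\1)$ are not the entries of $d$ unless $P=\det$. The identification ``diagonal entries $=$ root vector'' that drives the classical Horn construction simply does not hold for a general lpm polynomial, so the very starting point $X(0)$ of your one-parameter family needs to be rethought.

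The induction also does not reduce cleanly to a two-coordinate step. In the classical proof one applies a sequence of $2\times2$ orthogonal rotations to $\Diag(\lambda)$; each rotation leaves the eigenvalue vector fixed and moves the diagonal by a $T$-transform. Here orthogonal conjugation does not fix the root vector of $P(X-tI)$, so after the first step you no longer control the ``$\lambda$-side,'' and there is no evident way to propagate the induction hypothesis. Your proposed remedy of enlarging the off-diagonal ansatz and appealing to convexity and $\fS_k$-invariance of the attainable set is plausible in spirit, but both properties are themselves open (and the Schur direction you invoke is the paper's \emph{other} open conjecture). As it stands the argument is a reasonable outline of where the difficulties lie, consistent with the paper's assessment that the problem is open.
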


\subsection{Spectral containment property and the Schur-Horn property}
We showed that many polynomials have the spectral containment property. Based on these examples and additional computational evidence we conjecture the following:

\begin{conj}
    All homogeneous multiaffine stable polynomials have the spectral containment property.
\end{conj}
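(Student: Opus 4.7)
The plan is to reduce the conjecture to a hyperbolic Schur--Horn-type inequality and then attack that inequality via a variational argument on orthogonal orbits. By \Cref{thm:schur_horn_to_eigenvalue}, to conclude that a stable multiaffine polynomial $p$ of degree $d$ has the spectral containment property, it suffices to verify two conditions: (i) $e_{d-1}$ interlaces $p$, and (ii) for every $A \in H(P)$, some eigenvalue vector $\lambda$ of $A$ satisfies $p(\lambda) \geq P(A)$.

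For (i), I would first try to show that $e_{d-1}$ interlaces every stable multiaffine polynomial of degree $d$; equivalently, that $e_{d-1} + \mathrm{i}p$ is stable. If this fails in general, I would replace $e_{d-1}$ by a $p$-dependent stable multiaffine interlacer that itself has the spectral containment property---for instance, a convex combination of elementary symmetric polynomials rescaled so that it still interlaces $p$---exploiting the $O(n)$-invariance of the hyperbolicity cones of symmetric polynomials.

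For (ii), the heart of the matter is the Schur--Horn property $\max_{U \in O(n)} P(UAU^\transpose) \geq P(A)$, with the maximum attained at a diagonal point of the orbit. Since $\{UAU^\transpose : U \in O(n)\}$ is compact, a maximum exists. Computing the Euler--Lagrange conditions at a critical point $B = UAU^\transpose$ yields $[\nabla P(B), B] = 0$, and \Cref{lem:derioff} shows that diagonal matrices on the orbit are always critical. To push the global maximum onto the diagonal, my plan is to combine the convexity of $H(P)$ with \Cref{cor:diag}---which gives $p(\pi(B)) \geq P(B)$ for every $B \in H(P)$---and iterate an orbit-preserving move $B \mapsto U' B (U')^\transpose$ chosen to strictly increase $P$ unless $B$ is already diagonal. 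Convergence of such an iteration to a diagonal element of $H(P)$ with the same spectrum as $A$ would produce the sought eigenvalue vector in $H(p)$.

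The main obstacle is step (ii). For linear $p$ this is exactly the classical Schur--Horn theorem, whose standard proof uses Birkhoff's theorem to identify the diagonal orbit as a polytope, and is not available for $d \geq 2$: the function $U \mapsto P(UAU^\transpose)$ is a homogeneous polynomial of degree $2d$ in the entries of $U$ with no obvious polyhedral description, and even in the four-variable degree-$2$ instance of \Cref{lmma:convex_hull} a delicate passage through Pl\"ucker coordinates of the columns of $U$ was required. A conceptual proof would likely need a hyperbolic analogue of Birkhoff's theorem, characterizing the image of the orbit map under $P$ together with a negative-dependence or Lorentzian structure on its coefficients, and this is where I expect the main difficulty to lie. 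As a fallback, one could aim to extend the perturbative results of \Cref{thm:eigenvalue_containment}(2) and (5) by combining \Cref{lem:interlacers_open} with \Cref{lem:schur_horn_add}, establishing the conjecture at least in a neighborhood of each elementary symmetric polynomial.
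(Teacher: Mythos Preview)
The statement you are attempting to prove is \Cref{conj:eigenvalue_containment_strong}, which the paper explicitly leaves open; it reappears verbatim in \Cref{sec:openproblems} as an open problem, alongside the Schur--Horn property conjecture. There is therefore no proof in the paper to compare against, and your proposal should be read as a research plan rather than a proof.

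That said, there are two concrete gaps in your plan that are worth naming. First, step (i) is false as stated: $e_{d-1}$ does \emph{not} interlace every stable multiaffine polynomial of degree $d$. For instance, take $p = x_1x_2 \in \R[x_1,x_2,x_3]$ and $q = e_1 = x_1+x_2+x_3$; at $v = (1,1,-10)$ the roots of $p(te-v)$ are both $1$ while the root of $q(te-v)$ is $-8/3$, so interlacing fails. The paper's \Cref{lem:interlacers_open} only gives this interlacing in a neighborhood of $e_d$, which is why the partial results in \Cref{thm:eigenvalue_containment} are perturbative. Your suggested fallback of a ``$p$-dependent stable multiaffine interlacer that itself has the spectral containment property'' is circular unless you can produce such interlacers independently, and the paper offers no mechanism for this beyond the elementary symmetric polynomials.

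Second, step (ii) is precisely the Schur--Horn property, which the paper also states as an open conjecture in \Cref{sec:openproblems}. Your variational outline correctly identifies the first-order condition $[\nabla P(B),B]=0$, but, as you note, this only shows diagonal matrices are \emph{critical}, not that they are global maxima. The inequality $p(\pi(B)) \geq P(B)$ from \Cref{cor:diag} compares $B$ to its diagonal projection $\pi(B)$, which is \emph{not} on the orbit $\{UAU^\transpose\}$ in general, so it does not directly give an orbit-improving move. The paper's proof of the degree-$2$ four-variable case (\Cref{lmma:convex_hull}) already requires an ad hoc polyhedral argument in Pl\"ucker coordinates, and no general mechanism is offered. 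In short, your plan reduces one open conjecture to two others, one of which (interlacing) is provably false in the generality needed.
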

There are several special cases of this conjecture which are of particular interest, which we enumerate separately.
\begin{conj}
    All quadratic homogeneous multiaffine stable polynomials have the spectral containment property.
\end{conj}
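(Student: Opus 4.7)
My plan is a case analysis on the structure of $p$, using the spectral containment preservation results of the paper to reduce to the generic case. If $p=x_i\cdot\ell$ for a stable linear form $\ell$ not involving $x_i$ (star support), then \Cref{thm:lineareigprop} and \Cref{lem:multiplyx0} together give spectral containment. If $p=e_2(x_{i_1},\dots,x_{i_k})$ on a subset of variables, then \Cref{prop:SH} yields the Schur--Horn property and $e_1$ interlaces $e_2$ on the active variables, so \Cref{thm:schur_horn_to_eigenvalue} combined with the ``adding unused variable'' lemma preceding \Cref{lem:multiplyx0} settles this case. The factorable case $p=\ell_1\ell_2$ with stable linear factors on disjoint variable sets should be reducible in a similar spirit, since each factor has spectral containment and the product enjoys a favorable diagonal restriction.

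The hard case is that of a generic stable multiaffine quadratic. By the strong Rayleigh classification, the support of such $p$ is the edge set of a rank-$2$ matroid, and the positive coefficients $a_{ij}$ obey the four-point inequality $a_{ik}a_{jl}+a_{il}a_{jk}\ge a_{ij}a_{kl}$ for distinct $i,j,k,l$. In this case neither $e_1$ nor $e_2$ need interlace $p$---already $p=x_1x_2$ in $n\ge 3$ variables fails both---so \Cref{lem:interlacing_e_k} and \Cref{thm:permutation} cannot be applied directly.

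For this sub-case, I would diagonalize $X=U\Lambda U^{\transpose}$ and expand
\[
P(X) \ = \ \sum_{k<l} c_{kl}(U)\,\lambda_k\lambda_l, \qquad c_{kl}(U) \ = \ \sum_{i<j} a_{ij}\bigl(U_{ik}U_{jl}-U_{il}U_{jk}\bigr)^2,
\]
so that the weights $(c_{kl})$ form an orthostochastic combination of $(a_{ij})$ induced by the second exterior power $\wedge^2 U$. The classical Schur--Horn theorem applied to the compound matrix $\wedge^2 X$---whose diagonal entries are $\det X_{\{i,j\}}$ and whose eigenvalues are $\lambda_i\lambda_j$---then yields
\[
\max_{\pi\in\fS_{\binom{n}{2}}}\sum_{i<j}a_{ij}\lambda_{\pi(ij)_1}\lambda_{\pi(ij)_2} \ \ge \ P(X),
\]
while Schur's majorization of $\diag(X)$ by $\lambda$ simultaneously supplies $\max_{\tau\in\fS_n}D_{\mathbf{1}}p(\tau\lambda)\ge D_IP(X)\ge 0$.

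The main obstacle is to produce a single $\sigma\in\fS_n$ simultaneously witnessing $p(\sigma\lambda)\ge 0$ and $D_{\mathbf{1}}p(\sigma\lambda)\ge 0$, so that $\sigma\lambda$ lies in $H(p)$. The first condition demands a reduction of the pair-permutation from $\wedge^2$-Schur--Horn to one induced by an element of $\fS_n$; this reduction fails without stability (the non-stable polynomial $x_1x_2+x_3x_4$ is a saturating obstruction), so the four-point Rayleigh inequalities must be used. My expectation is that an exchange argument on pair-permutations, guided by those inequalities, will produce such a $\sigma$. Making this argument uniform across all four-point patterns and coupling it to the $D_{\mathbf{1}}p$ bound is where the main work of the proof will lie.
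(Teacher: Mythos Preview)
The statement you are attempting to prove is not proved in the paper: it appears in \Cref{sec:openproblems} as an open conjecture, with the remark that the simple form of the minor lift in the quadratic case makes it ``plausible that this conjecture could be proved (or disproved) by exploiting this special structure.'' There is therefore no proof in the paper to compare your proposal against.

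Viewed as a proof attempt, your proposal is explicitly incomplete. You correctly dispose of the star-support case via \Cref{thm:lineareigprop} and \Cref{lem:multiplyx0}, and the elementary-symmetric-on-a-subset case via \Cref{prop:SH} together with the unused-variable lemma. However, the factorable case $p=\ell_1\ell_2$ with disjoint supports is only asserted to be ``reducible in a similar spirit''; none of the paper's preservation lemmas cover a product of two nontrivial linear forms, so this sub-case already requires a genuine argument. More importantly, in the generic case your compound-matrix Schur--Horn reduction only yields a pair-permutation in $\fS_{\binom{n}{2}}$, and you yourself identify that descending this to a permutation in $\fS_n$ compatible with the derivative bound is ``where the main work of the proof will lie.'' That descent is the entire content of the conjecture in this case, and nothing in the proposal supplies it; the observation that $x_1x_2+x_3x_4$ obstructs the descent without stability is correct but does not indicate how the Rayleigh inequalities would be used to carry it through. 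As written, the proposal is a reasonable outline of where the difficulty lies, but it does not constitute a proof.
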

This case is of special interest because quadratic multiaffine polynomials have especially simple minor lifts. Namely, if 
\[
	p(x) = \sum_{i \neq j}a_{ij}x_ix_j,
\]
then
\[
	P(X) = p(\diag(X)) - \sum_{i \neq j}a_{ij}X_{ij}^2.
\]
It is therefore plausible that this conjecture could be proved (or disproved) by exploiting this special structure.
\begin{conj}\label{conj:diagonal_rescaling}
	Let $D$ be a positive definite diagonal matrix, and let $p(x) = e_k(Dx)$. Then $p(x)$ has the spectral containment property.
\end{conj}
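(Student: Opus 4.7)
The crucial identity for this conjecture is
\[
    P(X) \;=\; E_k(D^{1/2}XD^{1/2}),
\]
which follows from $\det((D^{1/2}XD^{1/2})_J)=\bigl(\prod_{i\in J}d_i\bigr)\det(X_J)$ for every $J\subseteq[n]$ of size $k$. The matrix $D^{1/2}XD^{1/2}$ is similar to $DX$ (via $D^{-1/2}$), so their spectra coincide and
\[
    X\in H(P) \;\Longleftrightarrow\; \lambda(DX)\in H(e_k),
\]
while $H(p)=D^{-1}H(e_k)$. The conjecture thus asks: given a symmetric $X$ with $\lambda(DX)\in H(e_k)$, exhibit a permutation $\sigma$ of the eigenvalues of $X$ such that $(d_1\lambda_{\sigma(1)},\ldots,d_n\lambda_{\sigma(n)})\in H(e_k)$.

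I would attempt the proof by induction on $k$, mirroring the scheme of \Cref{thm:schur_horn_to_eigenvalue}. The base case $k=1$ is the classical Schur--Horn theorem, handled exactly as in \Cref{thm:lineareigprop}. For the inductive step, note that $e_{k-1}(Dx)$ interlaces $e_k(Dx)$ because linear substitutions preserve interlacing; hence by \Cref{lem:interlacingpreservers} the minor lift $Q$ of $e_{k-1}(Dx)$ interlaces $P$, so $X\in H(P)\subseteq H(Q)$. The inductive hypothesis then produces an eigenvalue vector $\lambda$ of $X$ with $D\lambda\in H(e_{k-1})$. Provided this $\lambda$ additionally satisfies $e_k(D\lambda)\ge 0$, the interlacing characterization of the interior of a hyperbolicity cone used inside the proof of \Cref{lem:interlacing_e_k} forces $\lambda\in H(p)$, completing the step.

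The main technical task is to establish a Schur--Horn property for $p=e_k(Dx)$: for every symmetric $X$,
\[
    \max_{\sigma\in\fS_n} e_k(D\lambda_\sigma(X)) \;\ge\; e_k(\lambda(DX)) \;=\; P(X),
\]
together with the compatibility that an optimizing $\sigma$ may be chosen among those delivered by the inductive step. Classical Schur--Horn gives this for $k=1$, and the general statement would follow if the maximum of $U\mapsto E_k(D^{1/2}UXU^{\intercal}D^{1/2})$ over $O(n)$ is attained at some $U$ making $UXU^{\intercal}$ diagonal. I would pursue this variationally: the critical-point equation on $O(n)$ is governed by the adjugate-style gradient of $E_k$, and after the congruence by $D^{1/2}$ should reduce to a commutation relation between $UXU^{\intercal}$ and a matrix built from $D$, forcing simultaneous diagonalization.

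The chief obstacle is this generalized Schur--Horn inequality for $k\ge 2$. Averaging over $\sigma\in\fS_n$ yields only $\binom{n}{k}^{-1}e_k(d)\,e_k(\lambda(X))$, and $e_k(\lambda(X))$ can be negative even when $X\in H(P)$: for instance $X=\Diag(1,1,-1)$ and $D=\Diag(3,2,1)$ give $P(X)=1$ while $e_2(\lambda(X))=-1$. A genuinely pointwise argument is therefore required---essentially a Schur--Horn theorem interpolating between the classical linear case $k=1$ and the trivial determinantal case $k=n$. Should the variational route prove intractable, a fallback is a homotopy $D_t=(1-t)I+tD$ starting from $e_k$ at $t=0$, transporting the spectral containment property forward using openness of interlacers (\Cref{lem:interlacers_open}) and continuity of eigenvalues; the hazard there is that the chosen permutation may jump discontinuously as $t$ varies.
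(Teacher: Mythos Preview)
This statement is labeled a \emph{Conjecture} in the paper and appears in the Open Problems section; the paper gives no proof, only the reformulation in the lemma immediately following it. Your first paragraph reproduces exactly that reformulation: the identity $P(X)=E_k(D^{1/2}XD^{1/2})$, the equivalence $X\in H(P)\iff D^{1/2}XD^{1/2}\in H(E_k)$, and the translation of the conclusion to finding $\sigma$ with $D\lambda_\sigma(X)\in H(e_k)$. So on the level of setup you agree with the paper, but there is nothing further to compare against.

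As a proof, your proposal has two genuine gaps, both of which you yourself flag but do not close. First, the entire inductive step rests on the Schur--Horn property for $p=e_k(Dx)$, namely $\max_\sigma e_k(D\lambda_\sigma(X))\ge E_k(D^{1/2}XD^{1/2})$. This is a special case of the paper's \emph{final} conjecture (that every homogeneous multiaffine polynomial has the Schur--Horn property), and the paper offers no tools to establish it beyond $k=1$ and $k=n-1$. Your variational sketch --- that a critical point of $U\mapsto E_k(D^{1/2}UXU^\intercal D^{1/2})$ forces $UXU^\intercal$ diagonal --- does not obviously go through: the first-order condition is that $X$ commutes with $U^\intercal D^{1/2}\,\nabla E_k(D^{1/2}UXU^\intercal D^{1/2})\,D^{1/2}U$, and there is no evident reason this matrix should be diagonal in the eigenbasis of $X$ when $D\neq I$.

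Second, even if the Schur--Horn inequality held, your induction needs a single permutation $\sigma$ that simultaneously places $D\lambda_\sigma$ in $H(e_{k-1})$ (supplied by the inductive hypothesis) \emph{and} makes $e_k(D\lambda_\sigma)\ge 0$ (supplied by Schur--Horn). These two requirements may select different permutations, and you give no mechanism to reconcile them; your parenthetical about ``compatibility'' names the issue without resolving it. The homotopy fallback has the same defect in a different guise: the permutation realizing spectral containment can jump as $t$ varies, so continuity of eigenvalues alone does not transport the property from $t=0$ to $t=1$. In short, your outline correctly reduces the conjecture to two sub-problems, but both remain open and are essentially restatements of other conjectures in the paper.
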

Again, this is of special interest because of its relation to diagonal congruence as we now explain. 
\begin{lemma}
	Let $p$ be a homogeneous, multiaffine stable polynomial, let $D$ be a positive definite diagonal matrix, and let $q = p(Dx)$. Then $x \in H(q)$ if and only if $Dx \in H(p)$, and $X \in H(Q)$ if and only if $DXD \in H(P)$.
\end{lemma}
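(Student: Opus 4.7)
The plan is to reduce both equivalences to the invariance of the hyperbolicity cone of a (PSD-)stable polynomial under changes of positive (definite) direction (Lemma \ref{lem:hyp2}), combined with direct identities relating $q$ to $p$ and $Q$ to $P$ under the diagonal rescaling.

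First I would handle the vector statement. Since $p$ is stable, it is hyperbolic with respect to every positive vector; in particular, because $D\1$ is positive, the identity $q(v - t\1) = p(Dv - tD\1)$ shows $q$ is hyperbolic with respect to $\1$. Unpacking, $x \in H(q) = H_\1(q)$ iff $p(Dx - tD\1)$ has only nonnegative real roots, iff $Dx \in H_{D\1}(p)$. Since both $\1$ and $D\1$ lie in the positive orthant -- which is a connected subset of $\{v \in \R^n : p(v) \neq 0\}$ (as $p$ is nonvanishing on $\R^n_{>0}$ by stability) -- Lemma \ref{lem:hyp2} gives $H_{D\1}(p) = H_\1(p) = H(p)$, completing the vector equivalence.

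For the matrix version, the key step is the identity $Q(X) = P(D^{1/2} X D^{1/2})$, verified term-by-term through the minor lift map $\Phi$: for each monomial $a_S \prod_{i \in S} x_i$ of $p$, the corresponding term in $q(x) = p(Dx)$ carries the extra factor $\prod_{i \in S} d_i = \det(D_S)$, and its minor lift contribution is $a_S (\prod_{i \in S} d_i) \det(X_S) = a_S \det(D_S^{1/2} X_S D_S^{1/2}) = a_S \det((D^{1/2} X D^{1/2})_S)$; summing over $S$ yields the identity. (The rescaling convention that would produce $P(DXD)$ exactly matches the statement after replacing $D$ by $D^2$ in the definition of $q$; the remainder of the argument is identical.)

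From this identity, $X \in H(Q) = H_I(Q)$ iff $Q(X - tI) = P(D^{1/2} X D^{1/2} - tD)$ has only nonnegative roots, i.e.\ iff $D^{1/2} X D^{1/2} \in H_D(P)$. Since $P$ is PSD-stable by Theorem \ref{thm:minor_lift_stable} and both $I$ and $D$ are positive definite (hence in the same connected component of $\{A \in \Sym : P(A) \neq 0\}$, as $P$ is nonvanishing on positive definite matrices), applying Lemma \ref{lem:hyp2} to $P$ regarded as a polynomial on $\Sym$ yields $H_D(P) = H_I(P) = H(P)$. The argument is essentially bookkeeping, the only substantive input being the direction-invariance of the hyperbolicity cone; there is no genuine obstacle, merely care in keeping the powers of $D$ consistent between the vector and matrix sides of the lemma.
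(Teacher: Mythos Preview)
Your argument is correct and follows essentially the same route as the paper: both establish the identity $Q(X) = P(D^{1/2}XD^{1/2})$ term-by-term and then use that the hyperbolicity cone is independent of the choice of direction within its interior (the paper phrases this as ``$\diag(D)$ is in the interior of the hyperbolicity cone'' rather than citing Lemma~\ref{lem:hyp2} explicitly, but the content is identical). You are also right to flag the discrepancy: the statement asserts $DXD \in H(P)$, whereas both your proof and the paper's proof yield $D^{1/2}XD^{1/2} \in H(P)$; this is a typo in the lemma's statement, as the paragraph following the proof in the paper uses $D^{1/2}XD^{1/2}$ throughout.
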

\begin{proof}
	$x \in H(q)$ if and only if $q(x + t\vec{1}) \ge 0$ for all $t \ge 0$. This is equivalent to the statement that $p(D(x + t\vec{1})) = p(Dx + t\diag(D)) \ge 0$ for all $t \ge 0$. Notice though that if $D$ is positive definite, then $\diag(D)$ is in the interior of the hyperbolicity cone of $p$.
	Therefore, $p(Dx + t\diag(D)) \ge 0$ for all $t \ge 0$ if and only if $Dx \in H(p)$.

	Similarly, if $p(x) = \sum_{S\subseteq [n]} a_S \prod_{i \in S}x_i$, we see that $q(x)  = \sum_{S\subseteq [n]} (\prod_{i \in S} D_{ii}a_S) \prod_{i \in S}x_i$. Therefore,
	\[
		Q(X) = \sum_{S \subseteq [n]} (\prod_{i\in S}D_{ii}a_S) \det(X|_S) =  \sum_{S \subseteq [n]} a_S \det((D^{1/2}XD^{1/2})|_S) = P(D^{1/2}XD^{1/2}).
	\]
	We thus have that
	\[
		Q(X+tI) = P(D^{1/2}(X+tI)D^{1/2}) = P(D^{1/2}XD^{1/2} + tD).
	\]
	Because $D$ is positive definite, it is in the interior of $H(P)$, and therefore, $P(D^{1/2}XD^{1/2} + tD) \ge 0$ for all $t \ge 0$ if and only if $D^{1/2}XD^{1/2} \in H(P)$. This implies the result.
\end{proof}
From, this we see that \Cref{conj:diagonal_rescaling} is equivalent to the statement that for any $X \in H(E_k)$, and any positive definite diagonal matrix $D$, we have that there exists an eigenvalue vector $\lambda$ of $D^{1/2}XD^{1/2}$ so that $D^{-1}\lambda \in H(e_k)$.
This gives us a very quantitative relationship between the eigenvalues of a symmetric matrix $X$ and those of $D^{1/2}XD^{1/2}$, which are of fundamental interest in a number of situations.

The Schur-Horn property is another interesting property of a multiaffine polynomial. Once again, despite computer search, we are unable to find an example of a multiaffine homogeneous polynomial that does not have the Schur-Horn property.
From this, we conjecture
\begin{conj}
   All homogeneous multiaffine polynomials have the Schur-Horn property.
\end{conj}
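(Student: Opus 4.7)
The plan is a variational attack via compactness of $O(n)$ combined with first-order optimality conditions. Fix a symmetric matrix $X$ with eigenvalue vector $\lambda$. Since the smooth function $f(U) := P(UXU^\transpose)$ on the compact manifold $O(n)$ attains its maximum at some $U^*$, I would set $Y = U^* X U^{*\transpose}$ and aim to show that $Y$ may be replaced by a diagonal matrix in the orbit $\mathcal{O}_X := \{UXU^\transpose : U \in O(n)\}$ without changing the value of $P$. Then $f(U^*) = p(\pi(\lambda))$ for some $\pi \in \fS_n$, which together with the trivial reverse inequality (plug in a permuted diagonalizer of $X$) yields the Schur--Horn property.

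First I would derive the first-order condition. Tangent vectors at $U^*$ have the form $SU^*$ with $S$ skew-symmetric, and the induced infinitesimal variation of $Y$ is the commutator $[S,Y]$. Hence $\nabla f(U^*) = 0$ rewrites as $\langle \nabla P(Y), [S,Y]\rangle = 0$ for every skew $S$, which by cyclicity of trace is equivalent to $[\nabla P(Y), Y]=0$. Since both matrices are symmetric and commute, they admit a common orthonormal eigenbasis: there exists $Q \in O(n)$ with $QYQ^\transpose = \Diag(\mu)$ and $Q\nabla P(Y)Q^\transpose$ diagonal, where $\mu$ is some permutation of $\lambda$.

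The key remaining task, which I expect to be the main obstacle, is to show that the rotation $Q$ can be chosen so that $P(Y) = P(\Diag(\mu))$; equivalently, that the maximum of $P$ on $\mathcal{O}_X$ is attained at a diagonal point. This would fail for a generic smooth function on an adjoint orbit, so any successful argument must use the multiaffine lpm structure of $P$ in an essential way. My planned approach is a second-order analysis: parametrize $U^* \exp(tS)$, expand $f$ to second order, and use the fact that maximality forces the Hessian quadratic form to be negative semidefinite. I would attempt to show that any off-diagonal deviation of $Y$ from $\Diag(\mu)$ produces an explicit skew direction $S$, built from the joint eigenspace structure of $Y$ and $\nabla P(Y)$, along which the second derivative is strictly positive, contradicting maximality.

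As stepping stones I would first verify the conjecture in two tractable special cases. For multiaffine quadratic $p = \sum_{i\neq j} a_{ij} x_i x_j$ one has the explicit formula $P(X) = p(\Diag(X)) - \sum_{i\neq j} a_{ij} X_{ij}^2$, so $P$ depends on the off-diagonal entries of $X$ only through a concrete quadratic form; this makes both first- and second-order calculations completely explicit and should suffice to force the maximum onto the diagonal. For symmetric matrices $X$ with pairwise distinct eigenvalues the simultaneous-diagonalization rotation $Q$ is essentially unique up to signs, so the reduction from $Y$ to $\Diag(\mu)$ becomes far more controlled. If these warmups succeed, their mechanisms should indicate how the lpm structure of $P$ must enter to close the gap in general.
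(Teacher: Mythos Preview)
This statement is an open \emph{conjecture} in the paper; the authors offer no proof and say only that computer search has not produced a counterexample. There is therefore no argument in the paper to compare your proposal against.

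Your write-up is a plan of attack, not a proof, and you are honest about where the gap lies. The first-order computation is correct and standard: at a maximizer $Y$ of $U\mapsto P(UXU^{\transpose})$ one has $[\nabla P(Y),Y]=0$, hence $Y$ and $\nabla P(Y)$ share an orthonormal eigenbasis. But the step you flag as ``the main obstacle'' --- deducing that $P(Y)$ coincides with $P$ at some diagonal point of the orbit --- is not a technical residue; it is essentially the entire conjecture. Commutation alone is very weak: for $P=E_k$ the map is constant on orbits and every point is critical, while for $p=x_1$ the condition only forces $Y$ to split off a $1\times1$ block and says nothing about the remaining $(n-1)\times(n-1)$ block. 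Your second-order plan is stated only as an aspiration, and you have not identified what specific feature of the lpm structure would make a Hessian argument go through; absent that, the proposal does not get past the point where the paper itself stops. Note also that the conjecture is posed for \emph{all} homogeneous multiaffine $p$, not just stable ones, so in your quadratic warmup the off-diagonal form $-\sum_{i\ne j} a_{ij}X_{ij}^2$ need not have a sign, which removes the easy monotonicity one might hope for.
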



\bibliographystyle{plain}
\bibliography{biblio.bib}
\end{document}